\documentclass[11pt,a4paper]{article}
\usepackage[utf8]{inputenc}
\usepackage[english]{babel}
\usepackage[margin=0.8in]{geometry}
\usepackage{newunicodechar} 
\usepackage{mathrsfs} 
\usepackage{booktabs}
\usepackage{graphicx}
\usepackage[linesnumbered,ruled,vlined]{algorithm2e}
\usepackage{amsmath}
\usepackage{amssymb}
\usepackage{setspace}
\usepackage{wasysym}
\usepackage{color}
\usepackage{blindtext}
\usepackage{amssymb}
\usepackage{breqn}
\usepackage[english]{babel}
\usepackage{natbib}
\usepackage{authblk}

\usepackage{xcolor}
\usepackage{hyperref}
\hypersetup{
	colorlinks=true,
	linkcolor=blue,
	filecolor=red,      
	urlcolor=magenta,
	citecolor=blue
}
\usepackage{amsthm}
\usepackage{amsmath}
\newtheorem{proposition}{Proposition}[section]
\newtheorem{theorem}{Theorem}[section]

\newtheorem{lemma}[theorem]{Lemma}
\theoremstyle{remark}
\newtheorem{remark}{Remark}
\newtheorem{definition}{Definition}[section]
\newtheorem{assumption}{Assumption}
\theoremstyle{definition}
\usepackage[english]{babel}
\DeclareMathOperator*{\argmin}{argmin}
\DeclareMathOperator{\tv}{TV}
\DeclareMathOperator{\kl}{D}

\DeclareMathOperator{\tr}{Tr}
\newcommand{\sumi}{\sum_{i=1}^n}

\newcommand{\Sc}{\mathcal{S}}

\newcommand{\Pb}{\mathbb{P}}

\newcommand{\lb}{\left(}
\newcommand{\rb}{\right)}
\newcommand{\Eb}{\mathbb{E}}

\newcommand{\vt}[1]{V_{#1}V^T_{#1}}

\newcommand{\hvvt}[2]{\hat{V}_{#1}^{#2}(\hat{V}_{#1}^{#2})^T}
\newcommand{\vvT}[1]{V_{#1}V_{#1}^T}

\newcommand{\Rb}[1]{\mathbb{R}^{#1}}
\newcommand{\bb}{\mathbb{B}}

\begin{document}

\title{Robust Multi-Task Learning for Principal Component Analysis}
\author{Dali Liu, Haolei Weng \thanks{Corresponding Author. Email: wenghl@sustech.edu.cn }}

 \date{}
\large	
\maketitle 
 
\begin{abstract}
Principal component analysis (PCA) is a fundamental tool for learning low-dimensional structure from high-dimensional data.
When data are collected from multiple sources, the underlying task distributions may exhibit unknown degrees of similarity, with some tasks potentially arising from arbitrary distributions. We propose new multi-task PCA procedures that  exploit similarity structure across tasks to improve eigenspace estimation while remaining robust to outlier tasks.
We establish non-asymptotic convergence rates and show that the proposed procedures attain minimax optimal rates in a range of regimes.
One of the procedures builds on the matrix-depth notion of \cite{chen2018robust} and can achieve the optimal dependence of the estimation error on the proportion of outlier tasks, addressing a key challenge in robust multi-task learning. Extensive simulations and real-data analyses demonstrate the effectiveness of the proposed methods.
\end{abstract}

\section{Introduction}\label{sec:intro}

Principal component analysis (PCA) \cite{pearson1901on, hotelling1933analysis} is one of the most
fundamental tools for dimension reduction. Given observations drawn from a distribution with covariance matrix $\Sigma$, PCA aims to estimate the leading eigenspace of $\Sigma$, which captures the principal directions of variation. Its methodology and theory have been extensively studied; see \cite{jolliffe2016principal,zou2018selective,fan2021robust,greenacre2022principal} for comprehensive overviews.

In many modern applications, data are collected from multiple related sources or populations. Examples include single-cell measurements across tissue locations \cite{rao2021exploring,shang2022spatially}, patient samples collected from different hospitals \cite{sheller2020federated,li2024multisource}, and financial data across markets \cite{juneja2012common,nobi2016state}. We regard each source or population as a task and consider the resulting multi-task PCA problem. For each $i=1,2,\ldots, m$, task $i$ consists of observations drawn from a distribution with covariance matrix $\Sigma_i$. Let $V_i\in\mathbb{R}^{d\times K}$ contain, as orthonormal columns, the leading $K$ eigenvectors of $\Sigma_i$. The goal is to estimate the task-specific leading eigenspaces, i.e., column spaces of $V_i$, by borrowing information across related tasks. 

A central challenge in multi-task learning (MTL) is task heterogeneity: the degree of similarity across tasks may be unknown and vary substantially, with some tasks having distributions that can differ markedly from the majority or even be adversarially contaminated. Effective methods should exploit shared information across related tasks while remaining robust to such heterogeneity. Recent work has developed MTL procedures with provable guarantees for addressing this challenge under mixture models \cite{tian2022robust,tian2024towards}, parametric regressions \cite{tian2025learning}, and empirical risk minimization \cite{duan2023adaptive}. The current paper contributes to this line of work in the context of PCA. We focus on the following key issues:

\begin{itemize}
\item \emph{Adaptivity}: An MTL procedure should exploit strong similarity among eigenspaces when present, while avoiding excessive information sharing when the eigenspaces differ substantially across tasks.
\item \emph{Robustness}: To safeguard against data contamination, the procedure shall be robust against a fraction of outlier tasks from arbitrary distributions. 
\end{itemize}



To tackle the above two issues, we propose two multi-task PCA procedures within a two-stage framework. In the first stage, we construct a robust \emph{center} estimator that captures the shared eigenspace structure across related tasks. In the second stage, we adaptively combine the center estimator with task-specific pilot estimators through a soft-thresholding rule to obtain the final eigenspace estimator for each task. Our main contributions are summarized as follows:
\begin{itemize}
    \item[1.] We formulate the multi-task PCA problem under a Huber-type contamination model, which allows outlier tasks to follow arbitrary distributions. Two adaptive and robust MTL procedures are developed.

    \item[2.] We derive non-asymptotic bounds that quantify the effects of eigenspace similarity and task-level contamination on the estimation error, to reveal the adaptivity and robustness of the proposed procedures. In particular, the second procedure, based on the notion of matrix depth \cite{chen2018robust}, removes an unfavorable dimension factor from the contamination error term, thereby addressing an issue that also arises in other robust multi-task learning problems \cite{duan2023adaptive, tian2022robust}.

    \item[3.] We establish minimax lower bounds for the maximum error over related tasks and the average error across tasks. By comparing the upper and lower bounds, we characterize the regimes in which the proposed procedures attain the corresponding optimal rates.

    \item[4.] We conduct extensive simulation studies and real-data analyses to evaluate the empirical performance of the proposed methods. We also develop efficient tuning strategies to facilitate their practical implementation.

\end{itemize}

\subsection{Related Works}


\paragraph{Shared-subspace estimation.}
\cite{fan2019distributed,chen2022distributed,he2025distributed,li2026few} proposed communication-efficient distributed PCA algorithms that achieve estimation accuracy comparable to centralized PCA with only a small number of communication rounds. \cite{huang2021communication,shen2026dimension,zhou2026fedfask} incorporated fast sketching techniques into distributed and federated PCA, achieving a balance between statistical accuracy and computational efficiency for large-scale federated data. \cite{grammenos2020federated,froelicher2023scalable} developed privacy-preserving federated PCA methods based on differential privacy and multiparty homomorphic encryption, respectively. \cite{singh2024byzantine} proposed the Subspace-Median algorithm, which provides provable robustness to Byzantine attacks while enabling efficient estimation in federated PCA. These representative works, along with most existing distributed and federated PCA methods, primarily focus on estimating a common eigenspace under homogeneous data distributions. In contrast, the multi-task setting considered in this paper allows for heterogeneity among task-specific eigenspaces. PCA has also been studied in the context of heterogeneous multi-source data, typically under a general formulation that decomposes the underlying structure into a globally shared eigenspace and source-specific components \cite{shi2024personalized,yang2025estimating,wang2025stablepca,puchhammer2026sparse,li2026robust}. Yet, this line of research often does not account for task-level data contamination, whereas our work develops robust procedures that allow a fraction of tasks to arise from arbitrary distributions.

\paragraph{Multi-task and transfer learning for PCA.}
Several studies formulate dimension reduction directly from a multi-task perspective. Multi-task discriminant analysis accommodates heterogeneous feature spaces by decomposing each task-specific transformation into shared and task-specific components \cite{zhang2011multi}. Multi-task PCA jointly estimates task-specific principal subspaces using a Grassmannian regularizer that encourages similarity across tasks \cite{yamane2016multitask}. Another related direction is transfer learning for PCA, which leverages auxiliary datasets to improve eigenspace estimation for a designated target. Representative work considers factor recovery in high-dimensional models with weak factors \cite{he2025transpca}, knowledge transfer across heterogeneous PCA studies based on Grassmannian
barycenter \cite{li2024knowledge}, estimation for a target panel using auxiliary panels \cite{duan2024target}, and transferable latent-factor identification using cross-environment invariance and auxiliary-label information \cite{gu2026unveiling}. The above studies have made important progress in addressing eigenspace heterogeneity, but assume that all source datasets are uncontaminated. Complementing this line of work, we accommodate data contamination and develop robust learning procedures with statistical optimality guarantees.

\paragraph{Multi-task learning in other settings.}
Beyond PCA, there is a rapidly growing literature in the statistics community studying multi-task learning under various settings, including \cite{duan2023adaptive,knight2023multi,sui2026multisource,sui2025encodings,xu2025multitask,huang2025optimal,kim2026multi,javanmard2024multi,tian2025learning,guo2025statistical,tian2022robust,jing2026tuning,tian2026contaminated}, among others. These works typically address task heterogeneity based on either distance-based or representation-based similarity among task-specific model parameters, whereas our work characterizes task relatedness through spectral similarity among their principal eigenspaces.

\subsection{Notations and Organization}

We collect the notations used throughout the paper. For two sequences $\{a_n\}$ and $\{b_n\}$, $a_n=O(b_n)$ means $\sup_n |a_n/b_n|<\infty$, and $a_n=\Omega(b_n)$ if and only if $b_n=O(a_n)$. For simplicity, we use $a_n\lesssim b_n$ to denote that there exists a constant $C$ satisfying $a_n\leq Cb_n$, where $C>0$ may only depend on the constant $M$ from Definition \ref{Def:subgaussian}. For two real numbers $a$ and $b$, $a\vee b$ and $a\wedge b$ represent $\max(a,b)$ and $\min(a,b)$, respectively; $a_+=\max(0, a)$. For a vector $x\in \mathbb{R}^d$, we use $\|x\|_2$ to denote its Euclidean norm and let $\mathbb{B}({x},r)=\{y\in \mathbb{R}^d: \|y-x\|_2\leq r\}$. For a matrix $A\in \mathbb{R}^{d_1\times d_2}$, we use $\|A\|_F$ and $\|A\|$ to denote its Frobenius norm and spectral norm, respectively. Col($A$) denotes the linear space spanned by the columns of $A$. For a symmetric matrix $A$, let $\sigma_j(A)$ represent the $j$-th largest eigenvalue, and $\sigma_{\min}(A), \sigma_{\max}(A)$ denote the smallest and largest eigenvalue respectively. For a positive integer $k$, $[k]$ stands for the set $\{1,2,\ldots, k\}$, and $I_k$ is the identity matrix in $\Rb{k\times k}$. For any set $S$, $|S|$ denotes its cardinality and $S^c$ denotes its complement. We reserve $g_i$ for independent standard Gaussian random variables. For two random vectors $Y$ and $Z$, $Y\overset{d}{=}Z$ means they have the identical distribution. Let $e_j$ denote the vector whose components are all zero except that the $j$-th one equals 1. 
Let $\mathcal{O}^{d_1\times d_2}$ be the set of matrices with  orthonormal columns.
For two matrices $V_1, V_2\in \mathcal{O}^{d_1\times d_2}$, we adopt $\|V_1V_1^T-V_2V_2^T\|_F$ to measure the distance between $\operatorname{Col}(V_1)$ and $\operatorname{Col}(V_2)$. Note that $\|V_1V_1^T-V_2V_2^{T}\|_F$ is a well-defined distance between linear subspaces, and it is equivalent to the widely used sin$\Theta$ distance \cite{stewart1990matrix,vu2013minimax,cai2013sparse,yu2015useful,fan2019distributed}.

The remainder of the paper is organized as follows. Section \ref{section:setup} introduces the problem setup, and Section \ref{sec:method} presents the proposed methodology and its interpretation. Section \ref{sec:theory} establishes nonasymptotic upper bounds and the corresponding minimax lower bounds. Section \ref{sec:depth} develops the alternative method based on matrix depth. Sections \ref{sec:numerical studies} and \ref{sec:real data} present the simulation studies and real-data analyses, respectively. Section \ref{sec:discussion} concludes with a discussion of the main findings and directions for future research. The proofs are collected in Section \ref{sec:proofs}.

\section{Problem setup}\label{section:setup}

To formalize the multi-task PCA problem, consider that  there are $m$ tasks, where we observe a data matrix $X_i \in \mathbb{R}^{n_i\times d}$ for the $i$-th task. Suppose there exists an unknown subset $\Sc\subseteq [m]$ such that the rows of $X_i$ are i.i.d random vectors with zero mean and covariance matrix $\Sigma_i$ for $i\in \Sc$, while data from tasks in $\Sc^c$ can be arbitrarily distributed. For each $i\in \Sc$, let $V_i\in \mathbb{R}^{d\times K}$ denote the top $K$ eigenvectors of $\Sigma_i$\footnote{Throughout the paper, we suppress the dependency of various notations on $K$ for simplicity.}. The goal of the $i$-th task is to estimate the top $K$ eigenspace Col($V_i$), i.e., the column space of $V_i$. In the context of multi-task learning, we shall view tasks in $\Sc$ as related tasks, and we aim to leverage the potential similarity among those tasks to obtain improved learning performance. This motivates us to consider the following assumptions.
\begin{assumption}\label{assumption:data simi}
There exists a matrix $V\in \mathbb{R}^{d\times K}$ with orthonormal columns such that
\begin{equation}\label{cond:assump simi}
\max_{i\in \Sc}\|V_iV_i^T-VV^{T}\|_F\leq \delta.
\end{equation}
Moreover, 
\begin{equation}\label{cond:corrup-portion}
   \frac{|\Sc^c|}{m} \leq \epsilon.
\end{equation}
\end{assumption}

\begin{definition}\label{Def:subgaussian}
Following \cite{koltchinskii2017concentration,fan2019distributed,reiss2020nonasymptotic}, we say a random vector $Z\in \mathbb{R}^d$ is subgaussian if there exists a constant $M>0$ such that
\begin{equation*}
    \left\|u^T Z\right\|_{\psi_2} \leq M \sqrt{\mathbb{E}\left(u^T Z\right)^2}, \quad \forall u \in \mathbb{R}^d,
\end{equation*}
where the subgaussian norm of a random variable $Y\in \mathbb{R}$ is defined as
\begin{equation*}
 \|Y\|_{\psi_2}=\inf\Big\{\beta>0: \Eb\exp\lb Y^2/\beta^2\rb\leq 2\Big\}.   
\end{equation*}
\end{definition}

\begin{assumption}\label{assumption:subgaussian}

For each $i\in \Sc$, the rows of $X_i$ are i.i.d. subgaussian random vectors.    
\end{assumption}

To model potentially contaminated data, the above assumptions do not impose any distributional constraints for tasks in $\Sc^c$. These tasks can be considered as outlier tasks. Such a Huber's $\epsilon$-contamination \cite{huber1964robust} type modeling strategy has been recently adopted in other multi-task learning works \cite{konstantinov2020sample, duan2023adaptive, tian2022robust} as well. Condition \eqref{cond:assump simi} of Assumption \ref{assumption:data simi} formalizes the similarity structure by assuming the top $K$ eigenspaces in $\Sc$ are all within $\delta$-distance from a ``center" eigenspace. The parameter $\delta$ quantifies the similarity level among related tasks. The $\epsilon$ parameter in Condition \eqref{cond:corrup-portion} of Assumption \ref{assumption:data simi} represents the proportion of contaminated data at the task level. Assumption \ref{assumption:subgaussian} requires that the samples from tasks in $\Sc$ follow distributions whose tails decay as fast as a Gaussian vector - a standard distributional assumption in the PCA literature \cite{vu2013minimax, koltchinskii2017concentration, fan2019distributed, reiss2020nonasymptotic}.



We should emphasize that the set of related tasks $\Sc$ and the similarity parameter $\delta$ are both unknown. In the next two sections, we will develop a multi-task learning procedure that not only can effectively leverage the unknown similarity among related tasks (i.e. adaptive to the unknown task similarity), but also is robust against a fraction of outlier tasks (i.e. robust to data contamination). 

Finally, we mention a distributional property, introduced in \cite{fan2019distributed}, which leads to stronger theoretical guarantees for some of the results in Section \ref{sec:theory}.

\begin{definition}\label{Def: symmetric innovation}
 For a random vector $Z\in \mathbb{R}^d$ with covariance $\Sigma$, let $\Sigma=V\Lambda V^T$ be the spectral decomposition, and $Y={\Lambda}^{-1/2} V^T Z$. We say $Z$ has symmetric innovation if $Y \stackrel{d}{=}\left(I_d-2 e_j e_j^T\right) Y, \quad \forall j \in[d]$.
\end{definition}

A random vector has symmetric innovation means that flipping the sign of one component of the standardized vector does not change the distribution. All elliptical distributions with zero mean have symmetric innovations \cite{fang1990symmetric, fan2019distributed}.


\section{Methodology}\label{sec:method}

Recall that we have $m$ data matrices $\{X_i\}_{i=1}^m$. The goal is to estimate the top $K$ eigenspace Col($V_i$) of the covariance $\Sigma_i$. To fix the idea, for now consider that the parameter $\epsilon$ in Assumption \ref{assumption:data simi} equals zero, i.e., there is no outlier task. When the similarity parameter $\delta$ in Assumption \ref{assumption:data simi} is infinity, the $m$ tasks become unrelated. Then we may estimate each eigenspace Col($V_i$) separately by the empirical top $K$ eigenspace Col($\hat{V}_i$), where $\hat{V}_i\in \mathbb{R}^{d\times K}$ denotes the top $K$ eigenvectors of the empirical covariance matrix $\hat{\Sigma}_i=\frac{1}{n_i}X_i^TX_i$. On the other hand, when $\delta=0$ so that the $m$ tasks share the same top $K$ eigenspace, it is natural to estimate the $m$ eigenspaces by a ``pooled" estimator. In particular, the communication-efficient PCA algorithm proposed in \cite{fan2019distributed} computes $\bar{\Sigma}=\frac{1}{m}\sum_{i=1}^m \hat{V}_i\hat{V}_i^T$ and then uses the top $K$ eigenspace of $\bar{\Sigma}$ as the estimator. Now for the general case where $\delta\in (0,\infty)$ is unknown, it is desirable to have a procedure that can adapt to the unknown similarity. Moreover, the procedure shall also be robust against a fraction of outlier tasks when $\epsilon\neq 0$. To this end, we propose a multi-task learning procedure that interpolates between the two preceding methods, achieving both adaptivity and robustness. 

The key part of our method lies in aggregating those empirical eigenvectors $\{\hat{V}_i\}_{i=1}^m$ via penalization. Our procedure, termed MTL-PCA, is described in Algorithm \ref{alg:ardPCA}.
\begin{algorithm}[h]  \label{alg:ardPCA}
\caption{MTL-PCA}
Compute the top $K$ eigenvectors $\hat{V}_i$ of the empirical covariance matrix $\frac{1}{n_i}X_i^TX_i$, $\forall i\in [m]$. \\
Calculate $\hat{A}_i= \hat{V}_i\hat{V}_i^T$, and solve
\begin{equation}
\label{key:interp}
(\tilde{A}_1,\ldots,\tilde{A}_m, \tilde{A})=\argmin_{A_1,\ldots, A_m, A} \sum_{i=1}^m\left(\frac{1}{2}\|A_i-\hat{A}_i\|_F^2+\lambda_i\|A_i-A\|_F\right).
\end{equation}\\
Compute the top $K$ eigenvectors of $ \tilde{A}_i$, denoted by $\tilde{V}_i$, for $i\in [m]$. \\
Output: $\{\tilde{V}_i\}_{i=1}^m$
\end{algorithm}
As is clear from Algorithm \ref{alg:ardPCA}, setting $\lambda_i=0$ and $\lambda_i=\infty$ recovers the two aforementioned methods. It turns out that choosing proper positive values for $\lambda_i$ in the aggregation step \eqref{key:interp} will induce adaptivity and robustness. The following proposition sheds some lights on this point. Define the Huber loss function
\begin{equation*}
   \rho_\lambda(x) = 
\begin{cases}
\frac{x^2}{2}, & \text{if } |x| \leq \lambda, \\
\lambda \left( |x| - \frac{\lambda}{2} \right), & \text{if } |x| > \lambda.
\end{cases} \label{huber function}
\end{equation*}

\begin{proposition}\label{theorem:structure}
The solution of \eqref{key:interp} can be characterized as follows:
\begin{equation}\label{tildeAi}
\begin{aligned}
   \tilde{A}_i=\begin{cases}
     \tilde{A}, & \text{if }\|\tilde A-\hat{A}_i\|_F\leq \lambda_i,\\
     (1-\alpha)\hat{A}_i+\alpha\tilde A, & \text{if } \|\tilde A-\hat{A}_i\|_F> \lambda_i,
 \end{cases}  
\end{aligned}   
 \end{equation}
 where $\alpha=\frac{\lambda_i}{\|\hat{A}_i-\tilde A\|_F}$.  And $\tilde{A}$ is given by
 \begin{align}\label{At:huber}
 \tilde{A}=\argmin_A \sum_{i=1}^m \rho_{\lambda_i}(\|A-\hat{A}_i\|_F).
 \end{align}
\end{proposition}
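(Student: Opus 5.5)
The plan is partial minimization: fix $A$, minimize out each $A_i$ in closed form, and recognize the resulting value as the Huber loss. For fixed $A$, the objective in \eqref{key:interp} separates across $i$, and each summand
\[
f_i(A_i)=\tfrac12\|A_i-\hat A_i\|_F^2+\lambda_i\|A_i-A\|_F
\]
is strongly convex in $A_i$, so it has a unique minimizer. This is the proximal map of the scaled Frobenius norm $\lambda_i\|\cdot\|_F$ centered at $A$. Writing $A_i=A+B$, it becomes block soft-thresholding of $\hat A_i-A$:
\[
B^\star=\Big(1-\tfrac{\lambda_i}{\|\hat A_i-A\|_F}\Big)_+(\hat A_i-A).
\]
To verify this, I will check the subgradient optimality condition $0\in B-(\hat A_i-A)+\lambda_i\partial\|B\|_F$ in the two cases. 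If $\|\hat A_i-A\|_F\le\lambda_i$, then $B=0$ works, since $\hat A_i-A$ lies in the ball of radius $\lambda_i$, which equals $\lambda_i\partial\|0\|_F$. If $\|\hat A_i-A\|_F>\lambda_i$, then $B=(1-\alpha)(\hat A_i-A)$ with $\alpha=\lambda_i/\|\hat A_i-A\|_F$ satisfies the condition, because $\lambda_i\partial\|B\|_F=\lambda_i(\hat A_i-A)/\|\hat A_i-A\|_F$. Hence $A_i^\star(A)=A$ in the first case and $A_i^\star(A)=(1-\alpha)\hat A_i+\alpha A$ in the second.

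Next, I will plug $A_i^\star(A)$ back in to compute the minimal value as a function of $r=\|A-\hat A_i\|_F$. In the first case the value is $r^2/2$. In the second case it is
\[
\tfrac12\alpha^2r^2+\lambda_i(1-\alpha)r=\tfrac12\lambda_i^2+\lambda_i(r-\lambda_i)=\lambda_i\big(r-\tfrac{\lambda_i}{2}\big).
\]
Both cases together give $\min_{A_i}f_i=\rho_{\lambda_i}(\|A-\hat A_i\|_F)$.

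Finally, I will use the standard fact that jointly minimizing over $(A_1,\ldots,A_m,A)$ is equivalent to first minimizing over the $A_i$ and then over $A$. Thus $\tilde A$ minimizes $\sum_i\rho_{\lambda_i}(\|A-\hat A_i\|_F)$, which is \eqref{At:huber}, and $\tilde A_i=A_i^\star(\tilde A)$, which is \eqref{tildeAi}. Concretely: if $(\tilde A_i,\tilde A)$ is a joint minimizer, then $\tilde A_i$ must minimize $f_i$ at $A=\tilde A$, and by uniqueness $\tilde A_i=A_i^\star(\tilde A)$. For the reverse direction, suppose some $A'$ gave a smaller reduced objective. Then $(A_i^\star(A'),A')$ would beat the joint minimum, which is impossible.

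The only delicate point is existence and uniqueness of $\tilde A$ in \eqref{At:huber}. The objective is convex and coercive, since $\rho_\lambda$ grows linearly. Hence a minimizer exists, though it need not be unique. The characterization therefore holds for any joint minimizer, with $\tilde A$ any minimizer of \eqref{At:huber}. This is the step I expect to need a brief remark, not real work.
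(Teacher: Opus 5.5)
Your proposal is correct and takes essentially the same route as the paper: fix $A$, solve the subgradient condition for each $A_i$ (your block soft-thresholding), substitute back to obtain $\rho_{\lambda_i}(\|A-\hat A_i\|_F)$, and profile out the $A_i$. Your additional remarks make explicit points the paper leaves implicit: uniqueness of the inner minimizer by strong convexity, equivalence of joint and profiled minimization, and the possible non-uniqueness of $\tilde A$.
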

\begin{figure}
    \centering    \includegraphics[width=0.6\linewidth]{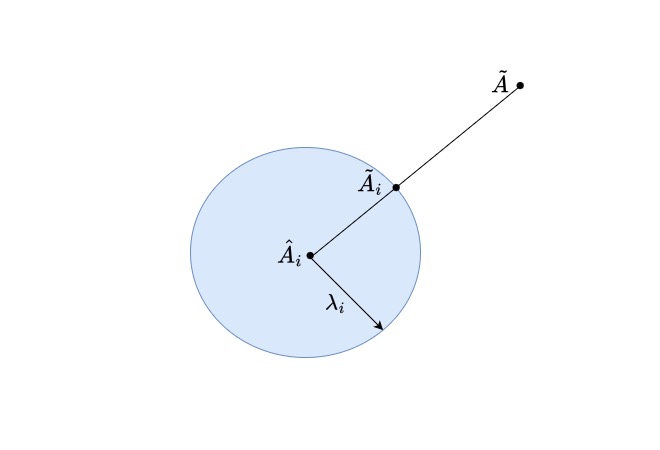}
    \caption{$\tilde{A}_i$ when $\hat{A}_i$ is not close to $\tilde{A}$.}
    \label{fig:Ahat}
\end{figure}

Recall that $\hat{A}_i=\hat{V}_i\hat{V}_i^T$ is an estimator for $V_iV_i^T$. Combining Condition \eqref{cond:assump simi} of Assumption \ref{assumption:data simi} with \eqref{At:huber} in Proposition \ref{theorem:structure}, we see that $\tilde{A}$ acts as a robust estimator of $VV^T$, thanks to the robustness property of the Huber loss. Therefore, $\tilde{A}$ offers a good summary of the ``center" information even in the presence of a small fraction of outlier tasks. Moreover, \eqref{tildeAi} in Proposition \ref{theorem:structure} shows that each $\tilde{A}_i$ is obtained by shrinking $\hat{A}_i$ towards the ``center" estimator $\tilde{A}$ in a soft-thresholding fashion: when $\hat{A}_i$ is close to $\tilde{A}$ (so that $V_iV_i^T$ is likely close to $VV^T$), $\tilde{A}_i$ is exactly equal to $\tilde{A}$; when $\hat{A}_i$ is not close enough to $\tilde{A}$, $\tilde{A}_i$ lies on the line connecting $\hat{A}_i$ and $\tilde{A}$ with the shrinkage amount $\lambda_i$ as shown in Figure \ref{fig:Ahat}. In summary, introducing a center parameter $A$ into the penalization optimization \eqref{key:interp} provides an adaptive and robust way to aggregate relevant information across tasks. \cite{duan2023adaptive} gives similar discussions for the one-dimensional Gaussian location estimation problem and further presents a general treatment of this penalization framework. We refer to \cite{duan2023adaptive} for more details.

The adaptivity and robustness of our method will be more precisely characterized in Section \ref{sec:theory}, via a non-asymptotic analysis of the convergence rate. Before that, it is important to point out another implication of Proposition \ref{theorem:structure}: The center estimator $\tilde{A}$ in \eqref{At:huber} may be replaced by other types of robust estimators in order to achieve better robustness properties. We discuss this point in detail in Section \ref{sec:depth}. 

\section{Theory}\label{sec:theory}

In this section, we develop statistical theories for our proposed procedure MTL-PCA. In Section \ref{sec:ub}, we first derive its non-asymptotic convergence rates. In Section \ref{section:minimax}, we then obtain minimax lower bounds to establish its minimax rate optimality in a wide range of regimes, and further identify the area of improvement.

\subsection{Convergence rate of MTL-PCA}\label{sec:ub}

For a symmetric matrix $\Sigma$, define its condition number and effective rank as follows:
\begin{align}
\label{kappa:r:def}
      \kappa(\Sigma)=\dfrac{\|\Sigma\|}{\sigma_K(\Sigma)-\sigma_{K+1}(\Sigma)}, \quad r(\Sigma)=\frac{\tr(\Sigma)}{\|\Sigma\|}. 
  \end{align} 
We use $\kappa_i, r_i$ to denote $\kappa(\Sigma_i),r(\Sigma_i)$ for notational simplicity. The following theorem characterizes the error rates of our procedure MTL-PCA (see Algorithm \ref{alg:ardPCA}). 
  
\begin{theorem}\label{theorem:upperbound}
Let Assumptions \ref{assumption:data simi} and \ref{assumption:subgaussian} hold. Assume $\max_{i \in \Sc}\sqrt{K}\kappa_i^2r_i/n_i\leq c_1, \min_{i\in \Sc}n_i\geq 2\log m, \epsilon\leq 1/4$, and choose
\begin{equation*}
    \lambda_i=c_2 \max_{i\in \Sc}\sqrt{\dfrac{K\kappa_i^2(r_i+\log m+ t)}{n_i}}, \quad i\in [m],~t\in \Big[(1-\log m)_+, \min_{i\in \Sc}n_i/2\Big],
\end{equation*}
where $c_1>0$ is a small constant and the constant $c_2>0$ is sufficiently large.

\begin{itemize}

\item[(i)] 

 
The following holds with probability at least $1-2\exp(-\tilde{t})-\exp(-t)$,
\begin{align}
\max_{i\in \Sc}\|\tilde{V}_i\tilde{V}_i^T-\vt{i}\|_F&\lesssim \tilde{t}\sqrt{\frac{1}{|\Sc|^2}\sum_{i\in \Sc}\frac{K\kappa_i^2 r_i}{n_i}}+\delta\wedge \max_{i\in \Sc}\sqrt{\dfrac{K\kappa_i^2(r_i+\log m+ t)}{n_i}}\nonumber \\
&+\epsilon \max_{i\in \Sc}\sqrt{\dfrac{K\kappa_i^2(r_i+\log m+ t)}{n_i}}+\frac{1}{|\Sc|}\sum_{i\in \Sc}\frac{K^{1/2}\kappa_i^2r_i}{n_i},  \label{bound:general} 
\end{align}
where $\tilde{t}\in [0, c_2c_3\sqrt{|\Sc|}]$ for some small constant $c_3>0$. If the rows of $X_i$ have symmetric innovation (see Definition \ref{Def: symmetric innovation}) for $i\in \Sc$, the last term in the upper bound \eqref{bound:general} can be dropped. 

\item[(ii)] Additionally assume that for $i\in\Sc^c$, the rows of $X_i$ follow subgaussian distributions with mean zero and covariance matrices $\Sigma_i$ satisfying $r_i\leq n_i$. We obtain that with probability at least $1-\exp(-t)$,
\begin{align}
\label{bound:outlier:task}
\max_{i\in \Sc^c}\|\tilde{V}_i\tilde{V}_i^T-\vt{i}\|_F\lesssim \max_{i\in [m]}\sqrt{\dfrac{K\kappa_i^2(r_i+\log m+ t)}{n_i}},
\end{align}
for $t\in [(1-\log m)_+, \min_{i\in [m]}n_i/2]$. As a result of \eqref{bound:general} and \eqref{bound:outlier:task}, with probability at least $1-2\exp(-\tilde{t})-2\exp(-t)$, the average error satisfies
\begin{align}
\notag \frac{1}{m}\sum_{i\in [m]}\|\tilde{V}_i\tilde{V}_i^T- \vt{i}\|^2_F&\lesssim \frac{\tilde{t}^2}{|\Sc|^2}\sum_{i\in \Sc}\frac{K\kappa_i^2 r_i}{n_i}+\delta^2 \wedge \max_{i\in \Sc}\dfrac{K\kappa_i^2(r_i+\log m+ t)}{n_i}\nonumber \\
&+\epsilon \max_{i\in [m]}\dfrac{K\kappa_i^2(r_i+\log m+ t)}{n_i}+\bigg(\frac{1}{|\Sc|}\sum_{i\in \Sc}\frac{K^{1/2}\kappa_i^2r_i}{n_i}\bigg)^2. \label{bound:sum}
\end{align}
If the rows of $X_i$ have symmetric innovation for $i\in \Sc$, the last term in the above bound can be dropped. 

\end{itemize}
\end{theorem}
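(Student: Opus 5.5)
The proof goes through the structural characterization in Proposition \ref{theorem:structure}: first control the robust center $\tilde A$, then transfer that control to each $\tilde A_i$ through the soft-thresholding formula \eqref{tildeAi}, and finally pass to eigenvectors with a Davis--Kahan argument.

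\textbf{Step 1: single-task estimates.} Write $\hat A_i - V_iV_i^T = L_i + R_i$, where $L_i$ is the first-order perturbation term and $R_i$ is a remainder. Explicitly,
\[
L_i=\sum_{k\le K<l}\frac{v_l^T(\hat\Sigma_i-\Sigma_i)v_k}{\sigma_k-\sigma_l}\,(v_lv_k^T+v_kv_l^T),
\qquad \|R_i\|_F\lesssim K^{1/2}\kappa_i^2 r_i/n_i .
\]
Both facts follow from subgaussian covariance concentration (Koltchinskii--Lounici) and the expansion used in \cite{fan2019distributed}. A union bound then gives, with probability at least $1-\exp(-t)$,
\[
\max_{i\in\Sc}\|\hat A_i-V_iV_i^T\|_F\le C\max_{i\in\Sc}\sqrt{K\kappa_i^2(r_i+\log m+t)/n_i}=:\lambda/c_2 .
\]
The $\log m$ factor here is the cost of the union bound over tasks. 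Under symmetric innovation we have $\mathbb{E}\hat A_i$ sharing eigenvectors with $\Sigma_i$, as in \cite{fan2019distributed}. This makes the bias of $R_i$ vanish in the relevant direction, which is why the last term of \eqref{bound:general} can be dropped.

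\textbf{Step 2: the center $\tilde A$ (main obstacle).} The objective $F(A)=\sum_i\rho_{\lambda_i}(\|A-\hat A_i\|_F)$ is convex with gradient
\[
\nabla F(A)=\sum_i \psi_{\lambda_i}(A-\hat A_i),
\]
where $\psi_\lambda(x)$ equals $x$ if $\|x\|_F\le\lambda$ and $\lambda x/\|x\|_F$ otherwise. The comparison point is the Huber target $A^*=\frac{1}{|\Sc|}\sum_{i\in\Sc}V_iV_i^T$, or equivalently $VV^T$ up to $\delta$. I split into two regimes.

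When $\delta\lesssim\lambda$, all inlier residuals $\hat A_i-A$ fall in the quadratic zone $\|\cdot\|_F\le\lambda$ for $A$ near $VV^T$. The Huber loss is then strongly convex with curvature $|\Sc|$ there, since $\epsilon\le1/4$. The outliers contribute a gradient of norm at most $|\Sc^c|\lambda\le\epsilon m\lambda$. At $A^*$ the inlier gradient is $\sum_{i\in\Sc}(\hat A_i-V_iV_i^T)$. Averaging the linear terms $L_i$ with a Hilbert-space Bernstein/Hanson--Wright bound gives the $\tilde t$-term $\tilde t\sqrt{\sum_{i\in\Sc}K\kappa_i^2r_i/n_i}/|\Sc|$. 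The range $\tilde t\le c_2c_3\sqrt{|\Sc|}$ is exactly what keeps this average inside the quadratic zone. The remainders contribute $\frac1{|\Sc|}\sum_{i\in\Sc}\|R_i\|_F$.

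A localization argument finishes this case: the gradient of $F$ on a sphere of suitable radius $r$ around $A^*$ points outward. This yields
\[
\|\tilde A-A^*\|_F\lesssim(\text{avg-noise})+\epsilon\lambda+\text{remainder}.
\]

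When $\delta\gtrsim\lambda$, the same outward-gradient argument still works at radius of order $\lambda+\delta$. The residuals are now in the linear zone, so the inliers act as a geometric median, and more than half of them are in $\Sc$. This gives $\|\tilde A-VV^T\|_F\lesssim\delta+\lambda$, which is enough for what follows.

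\textbf{Step 3: transfer to $\tilde A_i$ for $i\in\Sc$.} By \eqref{tildeAi}, either $\tilde A_i=\tilde A$ or $\tilde A_i=\hat A_i+\lambda_i u$ with $\|u\|_F=1$ pointing toward $\tilde A$.

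If $\delta\le\lambda/2$ (with $c_2$ large), Steps 1--2 give $\|\tilde A-\hat A_i\|_F\le\lambda_i$, so $\tilde A_i=\tilde A$. Then the error is $\|\tilde A-V_iV_i^T\|_F\le\|\tilde A-A^*\|_F+\|A^*-V_iV_i^T\|_F$, and the second piece is at most $2\delta$.

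Otherwise, in the linear case, $\|\tilde A_i-V_iV_i^T\|_F\le\|\hat A_i-V_iV_i^T\|_F+\lambda_i\lesssim\lambda$. In the case $\tilde A_i=\tilde A$ with $\delta$ large, we have $\|\tilde A-\hat A_i\|_F\le\lambda_i$, so the error is at most $\lambda_i+\|\hat A_i-V_iV_i^T\|_F\lesssim\lambda$.

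Combining the cases gives the $\delta\wedge\lambda$ term. Finally, Davis--Kahan applied to the projector $V_iV_i^T$, whose eigengap is $1$, gives $\|\tilde V_i\tilde V_i^T-V_iV_i^T\|_F\le 2\|\tilde A_i-V_iV_i^T\|_F$.

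\textbf{Part (ii).} For $i\in\Sc^c$, both cases of \eqref{tildeAi} give $\|\tilde A_i-V_iV_i^T\|_F\le\|\hat A_i-V_iV_i^T\|_F+\lambda_i$. Applying Step 1 to all $i\in[m]$, now using the added subgaussian assumption on the outlier tasks, yields \eqref{bound:outlier:task}.

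For the average bound \eqref{bound:sum}, split the sum over $\Sc$ and $\Sc^c$. Square \eqref{bound:general} for the $\Sc$ part. For the $\Sc^c$ part, use $|\Sc^c|/m\le\epsilon$ together with \eqref{bound:outlier:task}; note that the $\epsilon^2\lambda^2$ term from $\Sc$ is dominated by $\epsilon\max_{i\in[m]}(\cdot)$.

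The delicate part is the Step 2 localization. Three things must be balanced there: the quadratic-zone requirement behind the $\tilde t$ range, the $\epsilon\le1/4$ domination of outlier gradients, and the large-$\delta$ case. I would first try a single unified sphere argument with radius $r=C(\text{noise}+\epsilon\lambda+\delta\wedge\lambda)$ for the small-$\delta$ case.
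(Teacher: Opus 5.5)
Your architecture matches the paper's. The small-$\delta$ localization in Step~2 is Lemmas~\ref{lemma2}--\ref{lemma3}: near $\frac{1}{|\Sc|}\sum_{i\in\Sc}\hat A_i$ the inlier Huber loss is exactly quadratic with curvature $|\Sc|$, and the outlier part is a $\sum_{i\in\Sc^c}\lambda_i$-Lipschitz convex perturbation, handled by Lemma~\ref{lemma: WDF2}. For large $\delta$ the paper, like your Step~3, uses only $\|\tilde A_i-\hat A_i\|_F\le\lambda_i$, so the unproved geometric-median claim at the end of Step~2 is never needed. One constant needs repair: split at $\delta\le\lambda/c_2$, not $\delta\le\lambda/2$. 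Otherwise $\|A^*-V_iV_i^T\|_F$ can be $2\delta\approx\lambda$, and $\|\tilde A-\hat A_i\|_F\le\lambda_i$ fails.

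The genuine gap is the bias. First, $\|R_i\|_F\lesssim K^{1/2}\kappa_i^2r_i/n_i$ is not a valid pathwise bound. The second-order expansion controls $R_i$ only on an event where $\|\hat\Sigma_i-\Sigma_i\|$ is a small fraction of the eigengap. Off that event $R_i=\hat A_i-V_iV_i^T-L_i$ is unbounded, since $L_i$ is linear in $\hat\Sigma_i-\Sigma_i$. On it you get $\sqrt K\kappa_i^2(r_i+s)/n_i$ with probability $1-e^{-s}$, and holding simultaneously over $\Sc$ forces $s=\log m+t$. Averaging $\|R_i\|_F$ therefore yields $\frac{1}{|\Sc|}\sum_{i\in\Sc}\sqrt K\kappa_i^2(r_i+\log m+t)/n_i$. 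That is weaker than the stated bound and is not absorbed by the other terms when $r_i\ll\log m+t$. The paper centers at $\mathbb{E}\hat A_i$ instead of linearizing. The bias $\mathbb{E}\hat A_i-V_iV_i^T$ is deterministic with Frobenius norm $\lesssim\sqrt K\kappa_i^2r_i/n_i$ (Lemma~\ref{lemma: thm3 fan}). The fluctuation $\hat A_i-\mathbb{E}\hat A_i$ is bounded with $\psi_1$-norm $\lesssim\kappa_i\sqrt{Kr_i/n_i}$ (Lemma~\ref{lemma: subexponentail norm}), and Lemma~\ref{lemma:sum of exponential norms} turns this into the $\tilde t$ term.

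Second, and more seriously, your route cannot drop the last term under symmetric innovation. Symmetric innovation only makes $\mathbb{E}\hat A_i$ share eigenvectors with $\Sigma_i$; it does not make $\mathbb{E}\hat A_i=V_iV_i^T$. Indeed $\operatorname{tr}\big(V_i^T(V_iV_i^T-\mathbb{E}\hat A_i)V_i\big)=\tfrac12\mathbb{E}\|\hat A_i-V_iV_i^T\|_F^2>0$. Since you bound $\|\tilde A-V_iV_i^T\|_F$ and then apply Davis--Kahan against the projector $V_iV_i^T$, your chain always contains $\|\frac{1}{|\Sc|}\sum_{i\in\Sc}(\mathbb{E}\hat A_i-V_iV_i^T)\|_F$. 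This is generically of the same order as the term to be dropped.

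The missing idea, in Lemma~\ref{determin:general:error}(ii), is to apply Davis--Kahan against a reference matrix whose top-$K$ eigenspace is exactly $\mathrm{Col}(V)$ and whose eigengap is a constant. When $\delta>0$ the matrices $\mathbb{E}\hat A_i=V_i\Lambda_iV_i^T+\bar V_i\bar\Lambda_i\bar V_i^T$ have different eigenbases. The paper therefore rotates each onto the center basis, $H_i=(VO_i)\Lambda_i(VO_i)^T+(\bar V\bar O_i)\bar\Lambda_i(\bar V\bar O_i)^T$, with $\|V_i-VO_i\|_F\le\delta$ and $\|\bar V_i-\bar V\bar O_i\|_F\le\delta$. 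The condition $\|\mathbb{E}\hat A_i-V_iV_i^T\|_F\le 1/4$, which follows from $c_1$ small, gives $\sigma_{\min}(\Lambda_i)\ge 3/4$ and $\sigma_{\max}(\bar\Lambda_i)\le 1/4$. Hence $\frac{1}{|\Sc|}\sum_{i\in\Sc}H_i$ has top eigenspace $\mathrm{Col}(V)$ with gap at least $1/2$. The price is only $\|\frac{1}{|\Sc|}\sum_{i\in\Sc}(\mathbb{E}\hat A_i-H_i)\|_F\le 3\delta$, which is absorbed in the small-$\delta$ case.
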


The proof of Theorem \ref{theorem:upperbound} can be found in Section \ref{subsec:thm1}. We make several remarks about Theorem \ref{theorem:upperbound}. 
\begin{itemize}
\item Part (i) provides the upper bound for maximum error over related tasks. Since no distributional assumptions are made for the outlier tasks, we do not expect to have error control on $\Sc^c$. The upper bound consists of four parts. Consider the case $\kappa_i=\kappa,r_i=r,n_i=n, i\in \Sc$ for a concise interpretation. The rate $\sqrt{K\kappa^2 r/(nm)}$ in the first part is the oracle rate achievable when all tasks are the same \cite{fan2019distributed}. The second part characterizes the error induced by the task heterogeneity. It first increases with the similarity parameter $\delta$ (i.e. degree of dissimilarity among related tasks) and then flattens out, reaching the minimax rate of single-task learning $\sqrt{K\kappa^2r/n}$ \cite{vu2013minimax,cai2013sparse} (up to $\log m$). The $\log m$ term is unavoidable due to the simultaneous control of error for related tasks. The third part quantifies the impact of outlier tasks from which data can be arbitrarily contaminated. Its error increases with the contamination level $\epsilon$, but never exceeds the single-task learning rate. This shows that our method is robust against a fraction of outlier tasks from arbitrary distributions. The last part plays as a bias term arising from aggregating the empirical eigenvectors across tasks. The bias is negligible when the sample size is large enough, and it becomes zero for distributions with symmetric innovation. See \cite{fan2019distributed} for more details on this point.

\item When $\delta=0,\epsilon=0$ (i.e. all tasks share the same top $K$ eigenspaces), our convergence rate in \eqref{bound:general} is reduced to that of distributed PCA in \cite{fan2019distributed} (see Theorems 4 and 6 therein). In general, the convergence rate varies smoothly with $(\delta, \epsilon)$, and is never worse than the single-task learning rate (up to $\log m$). Moreover, the choice of the tuning parameters $\{\lambda_i\}_{i \in [m]}$ does not depend on $(\delta, \epsilon)$. These results demonstrate the adaptation of our procedure to the unknown task relatedness. 

\item When the data on $\Sc^c$ follow subgaussian distributions, Part (ii) shows that our procedure has controlled error on $\Sc^c$ as well. The convergence rate is comparable to the single-task learning rate. This is essentially the best achievable rate, because tasks on $\Sc^c$ do not share any similarity with others. The average error is a direct consequence of maximum errors over $\Sc$ and $\Sc^c$. Its four parts have similar interpretations as the ones in \eqref{bound:general}.

\item In the next section, we complement these upper bounds by the corresponding minimax lower bounds. The upper and lower bounds together reveal that our procedure MTL-PCA attains optimal rate for the average error, but can be suboptimal for the maximum error over related tasks. We then address the suboptimality issue in Section \ref{sec:depth}.

\end{itemize}

\subsection{Minimax lower bounds}\label{section:minimax}

To derive the minimax lower bounds for multi-task learning in PCA, we first specify the statistical model under consideration. We observe the data $\mathcal{X}=\{X_i\}_{i=1}^m$ where $X_i\in  \mathbb{R}^{n\times d}$ is the data matrix from the $i$-th task. For a given triplet $(\delta, \epsilon, \sigma)$ with $\delta \geq 0, 0\leq \epsilon\leq 1, \sigma>0$, a model $\mathcal{M}(\delta,\epsilon,\sigma)$ includes all distributions that satisfy:
\begin{enumerate}
\item There exists a subset $\Sc \subseteq [m]$ such that for all $i\in \Sc$, the $X_i$'s are independent and the rows of $X_i$ are i.i.d drawn from $\mathcal{N}(0,\Sigma_i)$ with $\Sigma_i=\sigma V_iV_i^T+I_d$, where $V_i\in \mathbb{R}^{d\times K}$ has orthonormal columns. Moreover, 
\[
\max_{i,j\in \Sc}\|V_iV_i^T-V_jV_j^T\|_F\leq \delta.
\]
\item The data $\{X_i\}_{i\in \Sc^c}$ follow an arbitrary distribution, independent from $\{X_i\}_{i\in \Sc}$. And the cardinality of $\Sc^c$ satisfies
\[
|\Sc^c|\leq \epsilon m.
\]
\end{enumerate}

The model $\mathcal{M}(\delta,\epsilon,\sigma)$ imposes Gaussian distributions with spiked covariance for related tasks. 
The spiked covariance model was introduced in \cite{johnstone2001distribution}. 
The minimax rates of eigenspace estimation of the spiked matrices in a single task have been studied extensively in the literature, such as \cite{birnbaum2013minimax,ma2013sparse,cai2013sparse,cai2015optimal}.
It is direct to confirm that the model considered in Part (i) of Theorem \ref{theorem:upperbound} contains $\mathcal{M}(\delta,\epsilon,\sigma)$ as a submodel. We thus develop the minimax lower bound for the maximum error under $\mathcal{M}(\delta,\epsilon,\sigma)$.

\begin{theorem}[Minimax lower bound for the maximum error]\label{theorem:minimax1}
Assume  $\epsilon\leq \frac{1}{2}$, $1\leq K\leq \frac{d}{C_1}$ and $\frac{\sigma+1}{\sigma^2}\frac{d}{n}\leq C_2$. It holds that
\begin{align*}
\inf_{\{\hat{V}_i\}_{i\in \Sc}} \sup_{\Pb \in \mathcal{M}(\delta, \epsilon,\sigma)}
\Pb \Bigg( & \max_{i\in \Sc}\|\hat{V}_i\hat{V}_i^T-\vt{i}\|_F \\
&\geq C_3\bigg(\sqrt{\frac{(\sigma+1)Kd}{\sigma^2nm}}+\delta\wedge \sqrt{\frac{(\sigma+1)Kd}{\sigma^2n}}   +\epsilon\sqrt{\frac{\sigma+1}{\sigma^2n}} \bigg)\Bigg)\geq C_4,
\end{align*}
where $\{C_i\}_{i=1}^4$ are positive universal constants.
\end{theorem}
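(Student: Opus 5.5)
Since the rate is a sum of three terms, it suffices to show that each term separately is a lower bound with constant probability; the maximum of three lower bounds is at least a third of their sum. Each term comes from restricting $\mathcal{M}(\delta,\epsilon,\sigma)$ to a convenient submodel.

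\emph{Oracle term.} I take $\delta=0$, $\epsilon=0$, $\Sc=[m]$, so that all tasks share one $V$. The problem then becomes single-task spiked PCA with $nm$ samples from $\mathcal{N}(0,\sigma VV^T+I_d)$. I will use Fano's method over a local packing of the Grassmannian $\{V^{(j)}\}$ with the following properties:
\begin{itemize}
\item pairwise distances $\|V^{(j)}V^{(j)T}-V^{(k)}V^{(k)T}\|_F\asymp \eta$;
\item log-cardinality $\gtrsim K(d-K)$, which exists by Szarek-type packing bounds for $K\le d/C_1$;
\item $\mathrm{KL}\big(\mathcal{N}(0,\Sigma^{(j)})\,\|\,\mathcal{N}(0,\Sigma^{(k)})\big)\lesssim \frac{\sigma^2}{\sigma+1}\|V^{(j)}V^{(j)T}-V^{(k)}V^{(k)T}\|_F^2$.
\end{itemize}
This is standard, as in \cite{cai2013sparse,vu2013minimax}. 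Choosing $\eta^2\asymp \frac{(\sigma+1)Kd}{\sigma^2 nm}$ (capped at a constant) makes the total KL $nm\cdot\frac{\sigma^2}{\sigma+1}\eta^2$ at most a small fraction of $Kd$. Fano then gives the first term with constant probability, since the error for any single $i$ is bounded by the maximum.

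\emph{Heterogeneity term.} Here I take $\epsilon=0$ and fix all tasks except task 1 at a common known $V_0$. I let $V_1$ range over a packing inside the ball $\{\|V_1V_1^T-V_0V_0^T\|_F\le \delta\}$ of radius $\eta=\delta\wedge\sqrt{(\sigma+1)Kd/(\sigma^2n)}$; the $\delta$-constraint holds because every point lies within $\delta$ of $V_0$. The other tasks carry no information about $V_1$, so this reduces to single-task PCA with $n$ samples constrained to a ball. Fano with a packing of log-cardinality $\gtrsim Kd$ at scale $\eta$ inside that ball then gives $\eta$. I will handle the locality via packings of the form $V=[\,\cos\theta\, V_0+\sin\theta\, W\,]$ for $W$ in a packing of orthonormal frames orthogonal to $V_0$.

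\emph{Contamination term.} This is where I expect the main difficulty: the term involves no $K$ or $d$ and is a two-point construction. I take $K=1$ directions (embedded within the rank-$K$ structure) $v,v'$ with $\|vv^T-v'v'^T\|_F=\eta\asymp \epsilon\sqrt{(\sigma+1)/(\sigma^2 n)}$, and set $\delta=0$. Under $P_0$, every related task uses $v$. Under $P_1$, every related task uses $v'$. The adversary contaminates an $\epsilon$ fraction of tasks so that the two mixtures of task distributions coincide, in the spirit of Huber-contamination lower bounds such as \cite{chen2018robust} applied at the task level with each task's data treated as one observation. Concretely, each task is an observation $X_i\sim Q_v^{\otimes n}$, and the condition needed is $\mathrm{TV}(Q_v^{\otimes n},Q_{v'}^{\otimes n})\le \epsilon/(1-\epsilon)$. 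Hellinger tensorization gives $\mathrm{TV}\lesssim \sqrt{n\cdot\frac{\sigma^2}{\sigma+1}}\,\eta$, so the choice of $\eta$ makes this $\le \epsilon/(1-\epsilon)$. The marginal laws of the whole data set are then identical under the two hypotheses, and since $\Sc$ is unknown no estimator can separate them. Hence $\max_{i\in\Sc}$ error $\ge \eta/2$ under one of them with probability $\ge 1/2$.

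The delicate points in this step are two. First, the contaminated tasks must be placed at random, i.i.d. across tasks, so that the $(1-\epsilon)P+\epsilon Q$ representation holds jointly. Second, $|\Sc^c|\le \epsilon m$ must hold, which I will ensure by concentration or by using $\epsilon/2$ with high probability. The assumption $\frac{\sigma+1}{\sigma^2}\frac dn\le C_2$ is what guarantees that all chosen $\eta$'s are below a constant, so that the packings are valid.
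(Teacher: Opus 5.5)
Your proposal follows essentially the same route as the paper. The paper's three pieces are: Fano over a shared-eigenspace packing with $nm$ effective samples for the oracle term; Fano over the local packing $[(1-\Delta^2)^{1/2}I_K;\,\Delta J_i]$ for task $1$, with the other tasks held fixed, for the heterogeneity term; and, for the contamination term, a task-level Huber argument with i.i.d.\ Bernoulli placement of outlier tasks and Hoeffding to pass to $|\Sc^c|\le\epsilon m$. The only difference is that the paper cites Theorem 5.1 of \cite{chen2018robust} rather than redoing your two-point construction.

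One detail to add: the concentration step only works when $\epsilon^2 m$ exceeds a constant. In the complementary regime you need to note, as the paper does, that $\epsilon\sqrt{(\sigma+1)/(\sigma^2 n)}\lesssim\sqrt{(\sigma+1)Kd/(\sigma^2 nm)}$, so the contamination term is absorbed into the oracle term.
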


Define the model $\mathcal{A}(\delta,\epsilon,\sigma)$ as a subset of $\mathcal{M}(\delta,\epsilon,\sigma)$, which further requires that for all $i\in \Sc^c$, the $X_i$'s are independent and the rows of $X_i$ are i.i.d drawn from $\mathcal{N}(0,\Sigma_i)$ with $\Sigma_i=\sigma V_iV_i^T+I_d$, where $V_i\in \mathbb{R}^{d\times K}$ has orthonormal columns. Part (ii) of Theorem \ref{theorem:upperbound} is developed under a model containing $\mathcal{A}(\delta,\epsilon,\sigma)$ as a subset. We then derive the minimax lower bound for the average error under $\mathcal{A}(\delta,\epsilon,\sigma)$.

\begin{theorem}[Minimax lower bound for the average error]\label{theorem:minimax2}
Under the same conditions of Theorem \ref{theorem:minimax1}, the following bound holds 
\begin{align*}
    \inf_{\{\hat{V}_i\}_{i=1}^m} \sup_{\Pb \in \mathcal{A}(\delta, \epsilon,\sigma)} \Pb \Bigg(&\frac{1}{m}\sum_{i=1}^m \|\hat{V}_i\hat{V}_i^T-\vt{i}\|_F^2 \\
    &\geq C_5  \lb\frac{(\sigma+1)Kd}{\sigma^2nm}+\delta^2\wedge\frac{(\sigma+1)Kd}{\sigma^2n}+\frac{(\sigma+1)\epsilon Kd}{\sigma^2n}\rb\Bigg)\geq C_6,
\end{align*}
where $C_5, C_6>0$ are universal constants.
\end{theorem}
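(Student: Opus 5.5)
The plan is to prove the lower bound term by term. Since $\max(a,b,c)\asymp a+b+c$, it suffices to show that each of the three rates $\frac{(\sigma+1)Kd}{\sigma^2nm}$, $\delta^2\wedge\frac{(\sigma+1)Kd}{\sigma^2n}$ and $\frac{(\sigma+1)\epsilon Kd}{\sigma^2n}$ is separately a lower bound, with constant probability, over a submodel of $\mathcal{A}(\delta,\epsilon,\sigma)$. In every construction all $m$ tasks are Gaussian spiked models $\Sigma_i=\sigma V_iV_i^T+I_d$, so each is automatically in $\mathcal{A}$. The basic tool for each term is Fano's method combined with a local packing of the Grassmannian. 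For fixed $K\le d/C_1$ and any radius $\eta\le c$, there is a set $\{U^{(1)},\dots,U^{(N)}\}\subset\mathcal{O}^{d\times K}$ with $\log N\gtrsim K(d-K)\gtrsim Kd$, pairwise distances $\|U^{(a)}U^{(a)T}-U^{(b)}U^{(b)T}\|_F\ge \eta$, and all elements within $2\eta$ of a fixed $U_0$. This can be built by perturbing $U_0=[I_K;0]$ with $[I_K; \eta' B]$, orthonormalized, where $B$ ranges over a Varshamov--Gilbert-type packing of sign matrices. The standard spiked-covariance KL identity then gives
\[
\mathrm{KL}\big(\mathcal N(0,\sigma UU^T+I)^{\otimes n}\,\|\,\mathcal N(0,\sigma U'U'^T+I)^{\otimes n}\big)=\frac{n\sigma^2}{2(\sigma+1)}\|UU^T-U'U'^T\|_F^2 .
\]

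\textbf{Term 1 (oracle rate).} Take $\Sc=[m]$ and let every task share the same $V_i=U^{(a)}$. The joint KL over the $m$ tasks is then $\frac{nm\sigma^2}{2(\sigma+1)}\cdot O(\eta^2)$. Choosing $\eta^2\asymp\frac{(\sigma+1)Kd}{\sigma^2nm}$ keeps it below $c\log N$. The condition $\frac{\sigma+1}{\sigma^2}\frac dn\le C_2$ ensures $\eta\le c$. Fano's inequality then shows that, with constant probability, some estimator of the common space errs by $\gtrsim\eta$ in Frobenius norm. Since $\frac1m\sum_i\|\hat V_i\hat V_i^T-V_iV_i^T\|_F^2\ge \|\bar P-UU^T\|_F^2$ with $\bar P=\frac1m\sum_i\hat V_i\hat V_i^T$, projecting $\bar P$ to the nearest packing element converts this into a bound on the average error.

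\textbf{Term 2 (heterogeneity).} Again take $\Sc=[m]$, with radius $\eta=\frac12\big(\delta\wedge\sqrt{\frac{(\sigma+1)Kd}{\sigma^2n}}\big)$, and let the tasks choose independent packing elements $V_i=U^{(a_i)}$ inside the $2\eta$-ball around $U_0$. Pairwise distances are then at most $4\eta\lesssim\delta$; rescale constants so the bound is $\le\delta$. The data of different tasks are independent and the parameters form a product, so the problem decouples. Applying Fano per task with single-task KL $\frac{n\sigma^2}{\sigma+1}\eta^2\lesssim Kd$ gives, for each $i$, $\Pb(\|\hat V_i\hat V_i^T-V_iV_i^T\|_F\ge c\eta)\ge c'$ under a uniform prior. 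To turn per-task constant-probability bounds into a constant-probability bound on the average, I would use a Bayes-risk argument. The expected number of failing tasks is at least $c'm$, and the count is bounded by $m$. By a reverse Markov inequality, with probability $\ge c'/2$ at least $c'm/2$ tasks fail, which gives the average bound $\gtrsim\eta^2$. The same reverse-Markov device, applied to the averaged Fano risk, handles any dependence that estimators introduce across tasks.

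\textbf{Term 3 (contamination).} Take $\Sc^c$ of size $\lfloor\epsilon m\rfloor$. Term 3 only matters when $\epsilon m\ge1$; otherwise it is dominated by term 1, since $\epsilon\le1/m$. Give the tasks in $\Sc^c$ independent arbitrary spaces drawn from a full packing of radius $\eta\asymp\sqrt{\frac{(\sigma+1)Kd}{\sigma^2n}}$, and give the tasks in $\Sc$ a common $U_0$. Each outlier task's space can be learned only from its own data, because in the worst case the other tasks carry no information about it. The per-task Fano argument from term 2 therefore shows that a constant fraction of the $\epsilon m$ outlier tasks each err by $\gtrsim\eta^2$. Their contribution to the average is $\gtrsim\epsilon\eta^2$.

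\textbf{Combining and the main obstacle.} Taking the worst of the three sub-constructions yields the sum bound with constants $C_5,C_6$. I expect the main obstacle to be the reduction from per-task Fano bounds to a high-probability statement about the average, uniformly over all (possibly coupled) estimators. I would handle it through the Bayes-risk and reverse-Markov route, making sure that the loss $\frac1m\sum_i\|\cdot\|_F^2$ is bounded by $2K$ so that reverse Markov applies.
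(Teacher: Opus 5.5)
Your overall strategy is sound, and for the $\delta$- and $\epsilon$-terms it differs from the paper's. The paper does not decouple the tasks. It takes a Gilbert--Varshamov code $\mathcal{H}\subseteq[N]^m$ with $|\mathcal{H}|\ge N^{m/4}$ and pairwise Hamming distance at least $\lceil m/4\rceil$ (Lemma~\ref{pack:set:construct:ref}), sets $V^{(l)}_j=W_{h_{lj}}$, and applies Fano once to the joint parameter under the averaged pseudo-metric $\rho(\theta,\tilde\theta)=\big(\frac1m\sum_j\|V_jV_j^T-\tilde V_j\tilde V_j^T\|_F^2\big)^{1/2}$. The Hamming separation gives $\rho\gtrsim\sqrt K\Delta$, while the joint KL is at most $mKd/40$ against $\log M\ge mKd/12$. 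The $\epsilon$-term uses the same code on the $\epsilon m$ outlier coordinates, with $W_1$ on $\Sc$. The oracle term uses the common-space construction, as in your Term~1.

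You instead put a uniform product prior on the packing indices. Then $(a_i,X_i)$ is independent of $(a_{-i},X_{-i})$, so $I(a_i;X)=I(a_i;X_i)$, and single-task Fano bounds each coordinate even for estimators that use all the data. Reverse Markov applied to the number $F\le m$ of failing tasks turns $\Eb F\ge c'm$ into $\Pb(F\ge c'm/2)\ge c'/2$. This coordinatewise, Assouad-type decoupling avoids the code lemma and the case splits $m\ge 8$, $\epsilon m\ge 8$. The cost is an extra averaging step and less explicit constants, whereas the paper gets each bound from a single Fano application.

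There is one slip you must repair. You state the packing only for radius $\eta\le c$ and claim that $\frac{\sigma+1}{\sigma^2}\frac dn\le C_2$ guarantees this, but it does not. That condition gives only $\eta^2\lesssim C_2K/m$ in Term~1 and $\eta^2\lesssim C_2K$ in Terms~2--3. Since the squared loss can be as large as $2K$, this large-$K$ regime is real, and a radius-$c$ packing would cap your bound at a constant, losing up to a factor $K$.

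You need packings at every radius up to $c\sqrt K$, i.e., with a constant principal angle per column. Lemma~\ref{lemma:packing construction} provides exactly this: $W_i=[(1-\Delta^2)^{1/2}I_K;\,\Delta J_i]$ with $J_i$ orthonormal and $\Delta\le 1/2$, giving distances of order $\sqrt K\Delta$. With your sign-matrix construction you would also have to restrict to $\|B\|\lesssim\sqrt d$. That keeps orthonormalizing $[I_K;\eta'B]$ a controlled perturbation when $\eta'\asymp 1/\sqrt d$.

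Two minor points. First, your closing remark about the loss being bounded by $2K$ is beside the point: reverse Markov on the loss itself would only give probability of order $\eta^2/K$. What matters is the count bounded by $m$, which is how you correctly argue in Term~2. Second, the Gaussian KL is $\frac{n\sigma^2}{4(\sigma+1)}\|UU^T-U'U'^T\|_F^2$, not $\frac{n\sigma^2}{2(\sigma+1)}\|UU^T-U'U'^T\|_F^2$. This is harmless, since you only use it as an upper bound.
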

The proof of Theorems \ref{theorem:minimax1} and \ref{theorem:minimax2} can be found in Section \ref{proof:them:2and3}.

\begin{remark}
\label{rem:ave}
Compare the lower bound in Theorem \ref{theorem:minimax2} with the upper bound \eqref{bound:sum}. Under spiked covariance model, the condition number and effective rank take values $\kappa_i=\frac{\sigma+1}{\sigma}, r_i=\frac{d+K\sigma}{\sigma+1}$. Hence, the average error bound \eqref{bound:sum} reads (ignore the $\log m$ term for now)\footnote{We use the symbol $\lesssim_p$ to represent high-probability upper bounds.}
\begin{align}
\notag \frac{1}{m}\sum_{i\in [m]}\|\tilde{V}_i\tilde{V}_i^T- \vt{i}\|^2_F&\lesssim_p   \frac{(\sigma+1)K(d+K\sigma)}{\sigma^2 nm}+\delta^2\wedge   \frac{(\sigma+1)K(d+K\sigma)}{\sigma^2 n} \nonumber \\
& + \frac{(\sigma+1)\epsilon K(d+K\sigma)}{\sigma^2 n}+\frac{(\sigma+1)^2K(d+K\sigma)^2}{\sigma^4 n^2}. \label{new:form:upper}
\end{align}
When $K\sigma \lesssim d$ and the last term above is negligible, the upper bound matches with the lower bound in Theorem \ref{theorem:minimax2}. Note that the last term in \eqref{new:form:upper} can be dropped if the data distributions have symmetric innovation (e.g. Gaussian distributions) or it can be absorbed into the first term as long as $n=\Omega((\sigma+1)\sigma^{-2}md)$. We thus conclude that our procedure MTL-PCA achieves minimax optimal rate (up to logarithmic term) for the average error under mild conditions. 
\end{remark}

\begin{remark}
Compare the lower bound in Theorem \ref{theorem:minimax1} with the upper bound \eqref{bound:general}. In light of the spiked covariance model with $K\sigma \lesssim d$, we simplify \eqref{bound:general} as (ignore the $\log m$ term)
\begin{align}
\notag \max_{i\in \Sc}\|\tilde{V}_i\tilde{V}_i^T- \vt{i}\|_F&\lesssim_p  \sqrt{\frac{(\sigma+1)Kd}{\sigma^2 nm}}+\delta \wedge   \sqrt{\frac{(\sigma+1)Kd}{\sigma^2 n}} \nonumber \\
& + \epsilon\sqrt{\frac{(\sigma+1)Kd}{\sigma^2 n}}+\frac{(\sigma+1)K^{1/2}d}{\sigma^2 n}. \label{new:form:maximu:err}
\end{align}
Following similar arguments as in Remark \ref{rem:ave}, the last term in \eqref{new:form:maximu:err} is negligible under mild conditions. Even so, we see that the upper bound differs from the lower bound in Theorem \ref{theorem:minimax1} by a dimensional factor $\sqrt{Kd}$ in the term involving $\epsilon$. This is an error term related to data contamination. Therefore, for maximum error over related tasks, MTL-PCA might not be minimax optimal, and this happens when the $\epsilon$-related term becomes the dominating term in the convergence rate (i.e. there is sufficient amount of data contamination). In the next section, we will remove the dimensional factor by developing an alternative multi-task learning procedure. 

\end{remark}

\section{Matrix depth based approach}\label{sec:depth}

As discussed in Section \ref{section:minimax}, MTL-PCA attains minimax optimal convergence rate for the average error. However, for the maximum error over related tasks, its convergence rate has a suboptimal dependence on the contamination level $\epsilon$ by a dimensional factor. In this section, we explore a depth-based approach to remove this dimensional factor, thereby achieving possible optimal robustness against a fraction of outlier tasks from arbitrary distributions. The underlying idea is straightforward. Recall that Proposition \ref{theorem:structure} decomposes the aggregation step \eqref{key:interp} of MTL-PCA into two parts: (i) deriving the robust center estimator $\tilde{A}$ by minimizing the Huber loss; (ii) shrinking towards $\tilde{A}$ via soft-thresholding to obtain the individual estimators. Our theoretical analysis in Section \ref{subsec:thm1} reveals that the suboptimality of MTL-PCA arises from $\tilde{A}$. In other words, the Huber loss function does not induce optimal robustness (in terms of convergence rate) against contamination in the context of multi-task learning. Thusly motivated, we seek to deliver a more robust center estimator by optimizing a notion of matrix depth function instead.


\subsection{Matrix depth}
\label{matrix:depth:sec}

Depth function is a classical concept in robust statistics \cite{tukey1975mathematics,liu1990notion,donoho1992breakdown,liu1999multivariate,zuo2000general,vardi2000multivariate,zuo2003projection,zuo2021general}. Motivated by Tukey’s depth function for location parameters \cite{tukey1975mathematics}, \cite{chen2018robust} introduced a new concept called matrix depth that achieves minimax optimal rates for robust covariance matrix estimation under Huber's $\epsilon$-contamination model. We will extend their matrix depth function to the multi-task learning setting. To start with, the matrix depth of a positive semidefinite matrix $\Gamma \in \mathbb{R}^{d\times d}$ with respect to a distribution $\Pb$ is defined in \cite{chen2018robust} as
\begin{equation*}
    \mathcal{D}(\Gamma, \mathbb{P}) = \inf_{\|u\|_2 = 1} \min \Big\{ \mathbb{P} \left( |u^T Y|^2 \leq u^T \Gamma u \right), \mathbb{P} \left( |u^T Y|^2 \geq u^T \Gamma u \right) \Big\},
\end{equation*}
where $Y\in \mathbb{R}^d \sim \Pb$ and $\|\cdot\|_2$ is the Euclidean norm. Let $\beta$ be the median of the chi-squared distribution $\chi^2(1)$. \cite{chen2018robust} showed that $\beta\Sigma$ is the maximizer of $\mathcal{D}(\Gamma, \mathbb{P})$ when the distribution $\Pb$ is $\mathcal{N}(0,\Sigma)$. That is, the population covariance matrix is the deepest point (up to a scaling) with respect to a zero-mean Gaussian distribution. Given i.i.d samples $Y_1,\ldots, Y_n$ from the contamination model $(1-\epsilon)\mathcal{N}(0,\Sigma)+\epsilon Q$ with an arbitrary distribution $Q$, the robust estimator of $\Sigma$ is defined as $\hat{\Sigma}=\hat{\Gamma}/\beta$, where $\hat{\Gamma}$ is obtained by maximizing the empirical matrix depth, 
\begin{equation*}
\hat{\Gamma}\in  \arg\max_{\Gamma \succeq 0} \mathcal{D}(\Gamma, \mathbb{P}_n),
\end{equation*}
with $\Pb_n$ denoting the empirical distribution. \cite{chen2018robust} proved that $\hat{\Sigma}$ achieves the minimax optimal convergence rate under the $\epsilon$-contamination model. Similar ideas and theories have been generalized for estimating structured covariance matrix (or scatter matrix) of elliptical distributions in the same work.

Now for our multi-task learning setting with $m$ datasets $\{X_i\in \mathbb{R}^{n_i\times d}\}_{i=1}^m$, there exist two noticeable differences: (a) the contamination occurs at the task level instead of individual data point level; (b) the uncontaminated data $\{X_i\}_{i\in \Sc}$ may not follow the same distribution. To address such differences, we propose a modified depth function. For each $i\in [m]$, we split the data $X_i$ into $B_i$ roughly equal-sized groups, denoted by
\begin{align*}
&X_i^{(1)}\in \mathbb{R}^{n_{i1}\times d},~X_i^{(2)}\in \mathbb{R}^{n_{i2}\times d},\ldots, ~X_i^{(B_i)}\in \mathbb{R}^{n_{iB_i}\times d}, \\
&n_{i1}=\Big(n_i-(B_i-1)\Big\lfloor\frac{n_i}{B_i} \Big\rfloor\Big), n_{i2}=\Big\lfloor\frac{n_i}{B_i} \Big\rfloor,\ldots, n_{iB_i}=\Big\lfloor\frac{n_i}{B_i} \Big\rfloor.
\end{align*}
The new depth function is defined as 
\begin{align}
   \mathcal{D}\left(\Gamma, \{X_i\}_{i=1}^m \right) =
\inf_{\|u\|_2=1} \min &\Bigg\{
\frac{1}{m} \sum_{i=1}^{m} \frac{1}{B_i}\sum_{j=1}^{B_i}\mathbb{I} \Big\{ \frac{u^T(X_i^{(j)})^TX_i^{(j)}u}{n_{ij}}\leq u^{T} \Gamma u \Big\}, \; \nonumber \\
&
\frac{1}{m} \sum_{i=1}^{m} \frac{1}{B_i}\sum_{j=1}^{B_i}\mathbb{I} \Big\{ \frac{u^T(X_i^{(j)})^TX_i^{(j)}u}{n_{ij}}> u^{T} \Gamma u \Big\}
\Bigg\}. \label{new:depth:def}
\end{align}
When $B_i=n_i=n, i\in [m]$, our depth function is reduced to the one in \cite{chen2018robust} as described in the last paragraph. In general, instead of utilizing each data point separately, our depth function takes the sample covariance matrix $\frac{1}{n_i}(X_i^{(j)})^TX^{(j)}_i$ from each group $X_i^{(j)}$ as input. As illustrated in the next section, without grouping (i.e. $B_i=n_i$), the contamination-related error term  would be inflated by an order of $\sqrt{n_i}$, while proper grouping helps achieve the correct order $n_{i}^{-\frac{1}{2}}$ in terms of the sample size. Based on the new depth function \eqref{new:depth:def}, we define the depth-based estimator as 
\begin{align}
\label{cov:center}
\hat{\Gamma}\in  \arg\max_{\Gamma \succeq 0}  \mathcal{D}\left(\Gamma, \{X_i\}_{i=1}^m \right).
\end{align}
The estimator $\hat{\Gamma}$ in \eqref{cov:center} (up to a scaling) summarizes information across $\{\Sigma_i\}_{i\in\Sc}$. Since we are only interested in eigenspaces, calibration like the one in \cite{chen2018robust} is not needed. Let $\hat{V}^D\in \mathbb{R}^{d\times K}$ denote the top $K$ eigenvectors of $\hat{\Gamma}$. Our new center estimator is 
\begin{align}
\label{new:center:robust}
\tilde{A}=\hat{V}^D(\hat{V}^D)^T.
\end{align}

\subsection{Depth-based multi-task learning}
\label{dbml:sec:main}
We introduced a depth-based center estimator $\tilde{A}$ in Section \ref{matrix:depth:sec}. The idea is to replace the Huber-type center estimator used in MTL-PCA by the new one and keep the remaining steps the same. The new estimator $\tilde{A}$ in \eqref{new:center:robust} thus leads to a depth-based multi-task learning procedure, termed DBMTL-PCA, as described in Algorithm \ref{alg:depthPCA}.

\begin{algorithm}[h]\label{alg:depthPCA}
\caption{DBMTL-PCA}
Compute $\tilde{A}=\hat{V}^D(\hat{V}^D)^T$, where $\hat{V}^D$ is the top $K$ eigenvectors of $\hat{\Gamma}$ from \eqref{cov:center}. \\
Calculate $\hat{A}_i= \hat{V}_i\hat{V}_i^T, i\in[m]$, where $\hat{V}_i$ is the top $K$ eigenvectors of the empirical covariance matrix $\frac{1}{n_i}X_i^TX_i$. \\
Compute $ \tilde{A}_i, i\in [m]$, according to \eqref{tildeAi}.\\
Calculate the top $K$ eigenvectors of $ \tilde{A}_i$, denoted by $\tilde{V}_i$, for $i\in [m]$. \\
Output: $\{\tilde{V}_i\}_{i=1}^m$
\end{algorithm}






The following theorem provides the error bound for the depth-based procedure DBMTL-PCA. To streamline the presentation, we consider the case $C_1n\leq \min_{i\in S}n_i\leq \max_{i\in S}n_i\leq C_1^{-1}n, C_2B\leq \min_{i\in S}B_i\leq \max_{i\in S}B_i\leq C_2^{-1}B$, and $\max_{i\in S}\kappa(\Sigma_i) \leq \kappa, \max_{i\in S}r(\Sigma_i) \leq r$. The notation $\kappa(\Sigma)$ and $r(\Sigma)$ was introduced in \eqref{kappa:r:def}.

\begin{theorem}\label{theorem:depth space}
Suppose Assumption \ref{assumption:data simi} holds, and the rows of $X_i$ are i.i.d. subgaussian and elliptical for $i\in S$. Choose
\begin{equation*}
    \lambda_i=C_3 \sqrt{\dfrac{K\kappa^2(r+\log m+ t)}{n}}, \quad i\in [m],~t\in \Big[(1-\log m)_+, C_1n/2\Big].
\end{equation*}
Additionally, we assume $\epsilon\leq \frac{1}{4}, C_1n\geq \max(r,2\log m), \frac{n}{B}\geq C_4$, and there exists $\Sigma \succ 0$ with top $K$ eigenvectors $V$\footnote{Recall that $V\in \mathbb{R}^{d\times K}$ is from Assumption \ref{assumption:data simi}.} such that
\begin{align}
\label{key:cov:heter}
\sqrt{\frac{n}{B}}\max_{i\in \Sc}\frac{\|\Sigma-\Sigma_i\|}{\sigma_{\min}(\Sigma_i)\wedge \sigma_{\min}(\Sigma)}+\sqrt{\frac{d+\tilde{t}}{mB}}\leq C_5. 
\end{align}
Here, $C_3,C_4>0$ are large constants and $C_5$ is sufficiently small. For the outputs $\{\tilde{V}_i\}_{i=1}^m$ from Algorithm \ref{alg:depthPCA}, it holds with probability at least $1-\exp(-t)-\exp(-\tilde{t})$,
\begin{align}\label{thm6.23}
\max_{i\in \Sc}\|\tilde{V}_i\tilde{V}_i^T-\vt{i}\|_F \lesssim &\min\Bigg\{\sqrt{\dfrac{K\kappa^2(r+\log m+ t)}{n}}, ~~\delta+\nonumber \\
&\sqrt{K}\kappa(\Sigma)\Bigg[\max_{i\in S}\frac{\|\Sigma-\Sigma_i\|}{\sigma_{min}(\Sigma_i)\wedge \sigma_{min}(\Sigma)}+\sqrt{\frac{B}{n}}\epsilon+\frac{B}{n}+\sqrt{\frac{d+\tilde{t}}{mn}}\Bigg]\Bigg\}.
\end{align}
When the rows of $\{X_i\}_{i\in S}$ are Gaussian, the term $\frac{B}{n}$ can be dropped from the above bound.  
\end{theorem}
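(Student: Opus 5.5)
The argument splits into a deterministic reduction for the soft-thresholding step and a high-probability bound for the depth-based center $\tilde A=\hat V^D(\hat V^D)^T$.

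\textbf{Step 1: the two branches of the minimum.} By \eqref{tildeAi}, $\tilde A_i$ either equals $\tilde A$ or lies on the segment from $\hat A_i$ to $\tilde A$ at distance $\lambda_i$ from $\hat A_i$, so $\|\tilde A_i-\hat A_i\|_F\le\lambda_i$ always. The single-task perturbation bound used for Theorem~\ref{theorem:upperbound} (Davis--Kahan plus subgaussian covariance concentration with a union bound over $m$ tasks) gives, with probability at least $1-\exp(-t)$,
\[
\max_{i\in\Sc}\|\hat A_i-V_iV_i^T\|_F\lesssim \sqrt{K\kappa^2(r+\log m+t)/n}\asymp\lambda_i .
\]
Hence $\|\tilde A_i-V_iV_i^T\|_F\lesssim\lambda_i$. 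The same bound then holds for $\tilde V_i\tilde V_i^T$, because the Frobenius distance from the projection onto the top-$K$ eigenspace of $\tilde A_i$ is at most a constant times $\|\tilde A_i-V_iV_i^T\|_F$ (Weyl plus Davis--Kahan, using the unit gap of $V_iV_i^T$). This gives the first branch. For the second branch, suppose $\|\tilde A-VV^T\|_F\le\eta$. Then
\[
\|\tilde A-\hat A_i\|_F\le \eta+\delta+\|\hat A_i-V_iV_i^T\|_F .
\]
If this is at most $\lambda_i$, then $\tilde A_i=\tilde A$ and the error is at most $\eta+\delta$. Otherwise $\tilde A_i=(1-\alpha)\hat A_i+\alpha\tilde A$, and $\lambda_i<\eta+\delta+\|\hat A_i-V_iV_i^T\|_F$. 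Since $c$ is large, $\|\hat A_i-V_iV_i^T\|_F\le\lambda_i/2$, which forces $\lambda_i\lesssim \eta+\delta$. Then
\[
\|\tilde A_i-V_iV_i^T\|_F\le\|\tilde A_i-\hat A_i\|_F+\|\hat A_i-V_iV_i^T\|_F\lesssim\lambda_i\lesssim\eta+\delta .
\]
It therefore remains to show that $\eta$ is of order $\sqrt K\kappa(\Sigma)[\cdots]$.

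\textbf{Step 2: the center.} By Davis--Kahan applied to $\hat\Gamma$ versus $\beta'\Sigma$ for a suitable scalar $\beta'>0$ (scaling does not affect the eigenspace),
\[
\eta\lesssim \frac{\sqrt K\,\|\hat\Gamma-\beta'\Sigma\|}{\beta'(\sigma_K(\Sigma)-\sigma_{K+1}(\Sigma))}.
\]
So I must show $\|\hat\Gamma-\beta'\Sigma\|/\|\Sigma\|$ is at most the bracket. I follow the matrix-depth analysis of \cite{chen2018robust} at the group level.

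For a fixed direction $u$ and a related task $i$, let $Z_{ij}(u)=u^T(X_i^{(j)})^TX_i^{(j)}u/(n_{ij}\,u^T\Sigma_iu)$. Ellipticity implies that the law of $Z_{ij}$ does not depend on $u$. In the Gaussian case it is $\chi^2_{n_{ij}}/n_{ij}$, with median $\beta_{n_{ij}}=1-O(1/n_{ij})$. In the general elliptical case it equals $\|w\|^2$ times an average of squared coordinates of a uniform direction, and its median is also $1+O(B/n)$ relative to a common constant; this is the source of the $B/n$ term. Moreover, $Z_{ij}$ has density of order $\sqrt{n/B}$ near its median. Consequently, perturbing the threshold $u^T\Gamma u$ by a relative amount $\tau$ changes the probability by about $\sqrt{n/B}\,\tau$ when $\tau\lesssim\sqrt{B/n}$.

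\textbf{Step 3: comparing $\hat\Gamma$ with $\beta'\Sigma$.} Take $\Gamma^*=\beta'\Sigma$. Each related task's population indicator probability is within $\sqrt{n/B}\,\|\Sigma-\Sigma_i\|/\sigma_{\min}$ of $1/2$. Outlier tasks shift the depth by at most $\epsilon$. The empirical process over $u$ and thresholds is handled by VC arguments for halfspace-type classes of quadratic forms, as in \cite{chen2018robust}, with $\sum_iB_i\asymp mB$ terms; this gives a deviation of $\sqrt{(d+\tilde t)/(mB)}$. Hence
\[
\mathcal D(\Gamma^*)\ge\tfrac12-O\bigl(\epsilon+\text{heter}\cdot\sqrt{n/B}+\sqrt{(d+\tilde t)/(mB)}+\sqrt{B/n}\bigr).
\]
Conversely, if for some $u$ the threshold $u^T\hat\Gamma u$ deviates from $\beta'u^T\Sigma u$ by a relative amount $\tau$, the depth at $\hat\Gamma$ is at most $\tfrac12-c\sqrt{n/B}\,\tau$ plus the same error terms. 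Condition \eqref{key:cov:heter} keeps the relevant quantities in the linear regime. Since $\hat\Gamma$ maximizes the depth, combining the two bounds yields
\[
\tau\lesssim \text{heter}+\sqrt{B/n}\,\epsilon+B/n+\sqrt{(d+\tilde t)/(mn)}
\]
uniformly in $u$. This gives $\|\hat\Gamma-\beta'\Sigma\|\lesssim\tau\|\Sigma\|$, and multiplying by $\sqrt K\kappa(\Sigma)$ via Step 2 completes the bound.

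The main obstacle is Step 3, in particular obtaining a uniform lower bound of order $\sqrt{n/B}$ on the density of $Z_{ij}$ near its median for general subgaussian elliptical laws with heterogeneous $\Sigma_i$, together with correct handling of the differing medians. I would first reduce to the Gaussian scale-mixture representation of elliptical distributions and control the radial part via subgaussianity.
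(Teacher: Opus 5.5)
Your architecture matches the paper's. Step 1 is the same deterministic reduction, $\max_{i\in S}\|\tilde V_i\tilde V_i^T-V_iV_i^T\|_F\le 6\sqrt2\,[(\|\tilde A-VV^T\|_F+\delta)\wedge\lambda/2]$. Step 2 is the same Davis--Kahan step. Step 3 uses the same decomposition: heterogeneity through the slope of the CDF, an $O(d)$-VC empirical process over the $\sum_i B_i$ group statistics, the $\epsilon/(1-\epsilon)$ contamination bound, then localization and linearization.

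The genuine gap is the step you flag yourself, and the route you propose would not close it. You want a pointwise lower bound of order $\sqrt{n/B}$ on the density of $Z_{ij}$ near its median. That is a local-limit statement and is harder than needed. Your plan to reduce to Gaussian scale mixtures also does not cover the hypotheses. A row uniform on the surface of an ellipsoid is zero-mean, subgaussian and elliptical, but it is not a Gaussian scale mixture, since its radius is deterministic. Your description of $Z_{ij}$ as ``$\|w\|^2$ times an average'' is also off. Each row carries its own radius, so $Z_{ij}$ is an average of $n_{ij}$ i.i.d. copies of $z^2$ with $\mathbb E z^2=1$. With a single common radial factor there would be no $\sqrt{n/B}$ concentration at all.

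The paper avoids densities entirely. Set $z_{ij}=u^TX_{i,j}/\sqrt{u^T\Sigma_iu}$; by ellipticity its law does not depend on $u$. Berry--Esseen gives $\sup_x\bigl|\mathbb P\bigl(N_i^{-1}\sum_j z_{ij}^2\le x\bigr)-\Phi\bigl(\sqrt{N_i/\sigma_i^2}\,(x-1)\bigr)\bigr|\lesssim \tau_i\sqrt{B/n}$. Here $\sigma_i^2={\rm Var}(z_{ij}^2)$, and $\tau_i$ is the normalized third absolute moment of $z_{ij}^2-1$, which is finite by subgaussianity. One then works with the surrogate depth built from these normal CDFs:
\begin{itemize}
\item It equals $1/2$ exactly at $\Gamma=\Sigma$ whatever the task-specific $\sigma_i$, so your ``differing medians'' issue disappears.
\item Its slope near $\Sigma$ is $\gtrsim\sqrt{n/B}$, directly from the normal density.
\item Heterogeneity of the $\Sigma_i$ enters only as a threshold perturbation, controlled by $\sup_{x\ge 0}|s|\,x\,e^{-s^2(x-1)^2/2}\lesssim |s|+1$.
\end{itemize}
The Berry--Esseen error divided by that slope is exactly the $B/n$ term.

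For the Gaussian refinement you also need a specific $\beta'$. Take $\beta'$ solving $|S|^{-1}\sum_{i\in S}F_{N_i}(\beta')=1/2$, where $F_N$ is the CDF of $\chi^2(N)/N$. Then the population depth at $\beta'\Sigma$ is exactly $1/2$, and the explicit bound $f_N\gtrsim\sqrt N$ on $[1-\kappa/\sqrt N,\,1+\kappa/\sqrt N]$ applies. With a generic $\beta'$, the $\sqrt{B/n}$ error you carry in $\mathcal D(\Gamma^*)$, and hence the $B/n$ term, would not drop.
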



The proof of Theorem \ref{theorem:depth space} is presented in Section \ref{proof:depth:method:bound}. 

\vspace{0.1cm}

\begin{remark}
The assumption of elliptical distribution provides spherical symmetry, making the depth function \eqref{new:depth:def} a meaningful notion with respect to the covariance matrix. The same assumption was made in \cite{chen2018robust}. The key assumption \eqref{key:cov:heter} imposes a restriction on the degree of heterogeneity of covariance among related tasks. To make it as weak as possible, we might choose $B\propto n$ (i.e. no grouping is performed). However, this would make the $B$-related error terms in \eqref{thm6.23} the largest. A favorable choice is 
\begin{align}
\label{group:choice:best}
B \propto \bigg(\max_{i\in S}\frac{\sqrt{n}\|\Sigma-\Sigma_i\|}{\sigma_{min}(\Sigma_i)\wedge \sigma_{min}(\Sigma)}\bigg)^2 \vee 1.
\end{align}
With this choice, it is direct to verify that the bound \eqref{thm6.23} holds for $B=1$ and  \eqref{key:cov:heter} can be replaced with the mild condition:
\begin{align}
\label{hetero:mild:con}
\sqrt{\frac{d+\tilde{t}}{m}}\Bigg(\min_{i\in S}\frac{\sigma_{min}(\Sigma_i)\wedge \sigma_{min}(\Sigma)}{\sqrt{n}\|\Sigma-\Sigma_i\|}\wedge 1\Bigg)\leq C_6,~\max_{i\in S}\frac{\|\Sigma-\Sigma_i\|}{\sigma_{min}(\Sigma_i)\wedge \sigma_{min}(\Sigma)}\leq C_7,
\end{align}
for sufficiently small constants $C_6,C_7>0$. This result reveals the importance of grouping in the construction of the depth function, a phenomenon not observed in the single-task learning setting.
\end{remark}

\vspace{0.05cm}

\begin{remark}
Referring to the upper bound \eqref{thm6.23}, we first observe that the convergence rate is no larger than the single-task learning rate $\sqrt{K\kappa^2r/n}$ (up to a logarithmic term). Moreover, compared to the upper bound for MTL-PCA (see Theorem \ref{theorem:upperbound}), the $\epsilon$-related term of \eqref{thm6.23} does not involve the dimensional factor anymore. To evaluate the tightness of the bound, we compare \eqref{thm6.23} with the minimax lower bound in Theorem \ref{theorem:minimax1} under the model $\mathcal{M}(\delta,\epsilon,\sigma)$ of Section \ref{section:minimax}. With the choice \eqref{group:choice:best}, \eqref{thm6.23} becomes
\begin{align*}
\max_{i\in \Sc}\|\tilde{V}_i\tilde{V}_i^T-\vt{i}\|_F \lesssim \frac{(\sigma+1)\sqrt{K}}{\sigma}\cdot \Bigg(\bigg[\sigma \delta +\sqrt{\frac{1}{n}}\epsilon+\sqrt{\frac{d}{mn}}\bigg] \wedge \sqrt{\frac{d+K\sigma+\log m}{n(\sigma+1)}}\Bigg).
\end{align*}
It matches (up to a $\log m$ factor) with the lower bound from Theorem \ref{theorem:minimax1} in the regime where $\max\{K,\sigma\}=O(1)$. The condition $K=O(1)$ is mild, since it is not uncommon to use only a small number of principal components in the PCA analysis. The other condition $\sigma=O(1)$ requires bounded eigengap, suitable for scenarios where the signal-to-noise ratio is not very high. This condition arises partly because the depth function has not yielded a dimension-free bound (involving the effective rank instead of the ambient dimension) for covariance matrix estimation -- the same issue occurs in \cite{chen2018robust}. We leave it for interesting future research to develop optimal multi-task learning procedures in terms of maximum error over related tasks, in broader regimes.

\end{remark}


\section{Simulations}\label{sec:numerical studies}

In this section, we conduct simulations to evaluate the empirical performance of the proposed MTL-PCA and DBMTL-PCA methods. 
Our experiments are designed to examine how the performance of the estimators is affected by the similarity among related tasks and the presence of outlier tasks. 
To run MTL-PCA, we apply the IRLS \cite{holland1977robust} algorithm  to solve the convex optimization problem \eqref{key:interp}. 
In the implementation of DBMTL-PCA, we adopt the iterative algorithm from \cite{chen2018robust}, whose code is available at \href{https://github.com/ChenMengjie/DepthDescent}{https://github.com/ChenMengjie/DepthDescent}, to compute the depth-based estimator $\hat{\Gamma}$ in \eqref{cov:center}.

The simulation study is divided into three parts.
In the first two parts, we assess the performance of MTL-PCA under two complementary settings.
In the first setting, the proportion of outlier tasks is fixed while the similarity among tasks is varied.
In the second setting, the task similarity is fixed while the proportion of outlier tasks is varied. These two settings allow us to evaluate the behavior of the method under different regimes of heterogeneity and contamination.
In the third part, we evaluate the performance of DBMTL-PCA under similar settings.

\subsection{Part I: Varying Data Similarity with a Fixed Outlier Proportion}
\label{subsec:sim part 1}

We generate $m=25$ tasks with data matrices
$X_i\in\mathbb{R}^{n_i\times d}$, where $n_i$ is the sample size
for task $i$. For simplicity, we set $n_i=n=1000$ for all tasks.
The population covariance matrix for each task follows the spiked model
\begin{equation}\label{spiked cov}
   \Sigma_i = \sigma V_i V_i^T + I_d, \quad i \in [m],
\end{equation}
where $V_i\in\mathcal{O}^{d\times K}$.
We set $d=100$, $K=3$, and $\sigma=10$, with $|\Sc|=20$ related
tasks and $|\Sc^c|=5$ outlier tasks. We vary $\delta$ to examine how
performance changes as the related tasks become more heterogeneous.

\begin{itemize}
\item \textbf{Eigenspace generation.}
We first generate a matrix $M\in\Rb{d\times K}$ with i.i.d.
standard Gaussian entries and apply QR decomposition to obtain a
common center $V_0\in\mathcal{O}^{d\times K}$.
For each related task, we independently generate a standard Gaussian
matrix $M_i\in\Rb{d\times K}$. We apply QR decomposition to $(I_d-V_0V_0^T)M_i$ and obtain $U_i\in\mathcal{O}^{d\times K}$ whose columns are
orthogonal to those of $V_0$.
For a given $\delta$, we set
$\theta=\arcsin (\frac{\delta}{\sqrt{2K}})$ and let
\begin{equation*}
    V_i=\sin (\theta)U_i+ \cos(\theta) V_0.
\end{equation*}
This construction yields $\|\vvT{i}{}-\vvT{0}{}\|_F=\delta$.
Thus, smaller values of $\delta$ correspond to greater similarity
among the related tasks.
For the outlier tasks $i\in\mathcal{S}^c$, we independently generate
$\tilde{V}_0$ in the same way as $V_0$ and set
$V_iV_i^T=\tilde{V}_0\tilde{V}_0^T$.
The outlier tasks therefore share a common eigenspace whose center
is generated independently of the related-task center.

\item \textbf{Data distribution.}
For each related task $i\in\Sc$, observations are independently
generated from $\mathcal{N}(0,\Sigma_i)$.
For outlier tasks $i\in\Sc^c$, we consider Gaussian observations
from $\mathcal{N}(0,\Sigma_i)$ and heavy-tailed observations from a
scaled multivariate $t$-distribution with $\nu=3$ degrees of freedom.
For the latter setting, let $Z\sim N(0,\Sigma_i)$ and
$W\sim\chi^2(\nu)$ be independent. We generate each observation as
\begin{equation*}
    \xi = \frac{Z}{\sqrt{W/(\nu-2)}}.
\end{equation*}
The scaling ensures that $\mathrm{Cov}(\xi)=\Sigma_i$.
Thus, the two settings have the same population covariance structure
but differ in the tail behavior of the outlier observations.

\item \textbf{Methods and evaluation.}
We compare MTL-PCA with four competing methods:
\begin{enumerate}
\item Selected Grassmannian barycenter PCA (sgbPCA):
the transfer learning method introduced in Algorithm~2 of
\cite{li2024knowledge}.
We treat each task in turn as the target and the remaining tasks as
candidate sources. Let
$\widetilde P_j=\widetilde V_j\widetilde V_j^\top$
denote the individual PCA projector for task $j$.
Starting from $P_i^{(0)}=\widetilde P_i$, we iterate
\begin{align*}
\mathcal I_i^{(t)}
&=
\left\{
j \ne i :
\operatorname{tr}\!\left(
P_i^{(t-1)} \widetilde P_j
\right) \ge \tau
\right\}, \\
P_i^{(t)}
&=
\Pi_K\!\Bigg(
n_i \widetilde P_i
+
\sum_{j \in \mathcal I_i^{(t)}} n_j \widetilde P_j
\Bigg),
\qquad t=1,\ldots,T,
\end{align*}
where $\Pi_K(A)$ denotes the orthogonal projector onto the leading
$K$-dimensional eigenspace of $A$, and $n_j$ is the sample size of
task $j$. The threshold $\tau$ controls source selection, and $T$
is the number of iterations. The final estimator is
$\widehat V_i\widehat V_i^\top=P_i^{(T)}$.

We parameterize the threshold as $\tau=\rho K$ and select $\rho$
by four-fold cross-validation over $\{0.80,0.81,\ldots,1.00\}$.
We examined $T\in\{1,\ldots,10\}$, tuning $\rho$ separately for each
value of $T$. Performance stabilized at $T=3$, with negligible
changes for larger values, so we fix $T=3$ throughout the experiments.

\item Single-task PCA (sPCA):
PCA applied separately to each task.

\item Distributed PCA (dPCA):
the distributed algorithm introduced by \cite{fan2019distributed}.
For each task $i$, let $\hat{V}_i\in\mathcal{O}^{d\times K}$ contain
the top $K$ eigenvectors of the sample covariance matrix
$n_i^{-1}X_i^T X_i$ as columns. The algorithm computes the top $K$
eigenvectors of $\sum_{i=1}^m\hat{V}_i\hat{V}_i^T$, yielding a
common eigenspace estimator.

\item Pooled PCA (pPCA):
PCA applied to the pooled observations from all tasks, yielding a
common eigenspace estimator.
\end{enumerate}

Our proposed MTL-PCA can be implemented in a distributed setting,
with a central server communicating with local data holders through
summary statistics. We consider two tuning strategies: one based
on a single round of communication, denoted by mPCA1, and the other
based on multiple rounds of communication, denoted by mPCA2.
Details are provided in Section~\ref{subsec: tuning stra}.

For each method, let $\hat{V}_i$ denote the eigenspace estimator for
task $i$. We evaluate performance using the maximum projector error
over related tasks (ME) and the root mean squared projector error
across all tasks (AE):
\begin{equation*}
\text{ME} = \max_{i \in \mathcal{S}} \|\hat{V}_i \hat{V}_i^T - V_i V_i^T\|_F,~~ \text{AE} = \sqrt{\frac{1}{m} \sum_{i=1}^m \|\hat{V}_i \hat{V}_i^T - V_i V_i^T\|_F^2}.
\end{equation*}
Each experimental configuration is repeated 100 times.
\end{itemize}

Figure~\ref{fig:sim1_gaus} presents the results for Gaussian outlier
tasks. When $\delta$ is small, all methods that share information
across tasks achieve substantially lower ME than sPCA.
In particular, sgbPCA, mPCA1, and mPCA2 have similar ME and outperform
dPCA and pPCA. As $\delta$ increases, the ME of dPCA and pPCA rises
steadily and eventually exceeds that of sPCA, illustrating the
limitations of non-adaptive average aggregation for increasingly heterogeneous tasks. The ME of sgbPCA follows a nonmonotonic pattern: it exceeds that of sPCA at intermediate values of $\delta$, peaks near $\delta=0.35$,
and then decreases toward the sPCA level.
This pattern suggests that source selection is most challenging at
intermediate levels of heterogeneity. By comparison, mPCA1 and mPCA2 have nearly identical ME, remain below sPCA throughout the displayed range, and approach the single-task error level smoothly as $\delta$ increases.

The AE of dPCA and pPCA is substantially larger than that of the other methods even when
$\delta$ is small, since the eigenspace estimators from the two methods are highly
biased towards related tasks. In contrast, sgbPCA has the lowest AE for small
$\delta$. Its advantage under this criterion can be attributed to its
ability to share information within both the related-task group and
the outlier-task group (with each task treated as the target in turn), as the latter also has a shared eigenspace in this experiment. At intermediate values of $\delta$, the AE of
sgbPCA rises above that of the two MTL-PCA variants, before becoming
slightly lower again at larger values of $\delta$.
Both mPCA1 and mPCA2 remain below sPCA and show a smooth increase in
AE as task heterogeneity grows.

\begin{figure}[h]
    \centering
    \includegraphics[width=\linewidth]{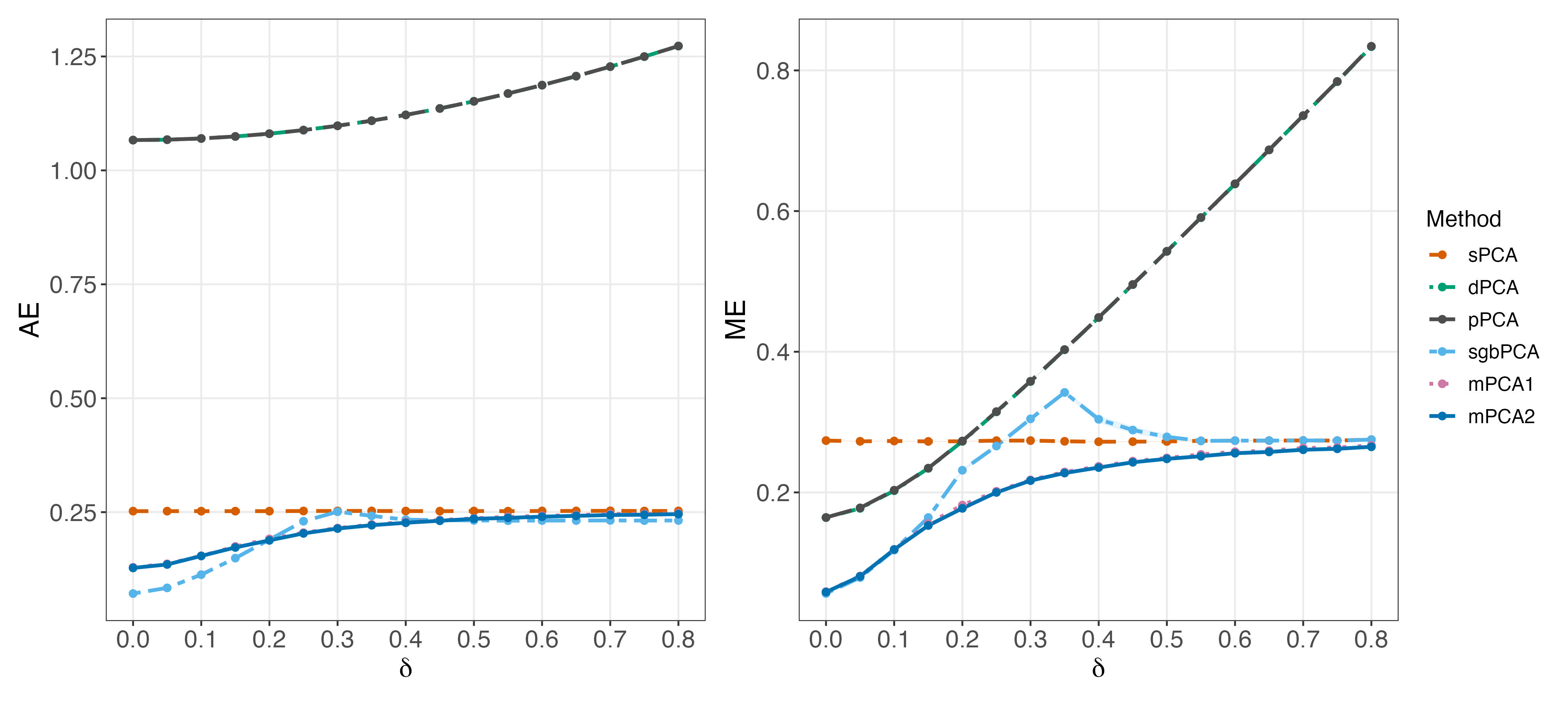}
    \caption{Effect of task heterogeneity $\delta$ with Gaussian
    outlier tasks. Left panel: Average error (AE).
    Right panel: Maximum error (ME). The outlier proportion is fixed at $5/25$.}
    \label{fig:sim1_gaus}
\end{figure}

Figure~\ref{fig:sim1_t3} presents the results when the outlier tasks
follow scaled multivariate $t_3$ distributions.
The ME patterns are broadly similar to those in the Gaussian setting:
the errors of dPCA and pPCA increase with $\delta$, sgbPCA exhibits
a peak at intermediate values of $\delta$, and the two MTL-PCA
variants remain close to one another and below sPCA. Unlike in the Gaussian setting, pPCA generally has larger errors
than dPCA. One explanation is that the “early aggregation” technique adopted by pPCA (computing eigenvectors after aggregation), compared with the “late aggregation” technique used by dPCA (computing eigenvectors before aggregation), tends to be more fragile to heavy-tailed data.

Regarding AE, mPCA1 consistently achieves the lowest error,
followed by sgbPCA, whereas mPCA2 remains much closer to sPCA.
Nevertheless, mPCA1 and mPCA2 have nearly identical ME over the
related tasks. As described in Section~\ref{subsec: tuning stra}, mPCA1 uses a
projector-distance cross-validation criterion, whereas mPCA2 uses
held-out explained variance. These results suggest that the
projector-distance criterion is more effective at controlling
overall projector error in this heavy-tailed setting, while both
strategies yield comparable performance for the related tasks.

\begin{figure}[h]
    \centering
    \includegraphics[width=\linewidth]{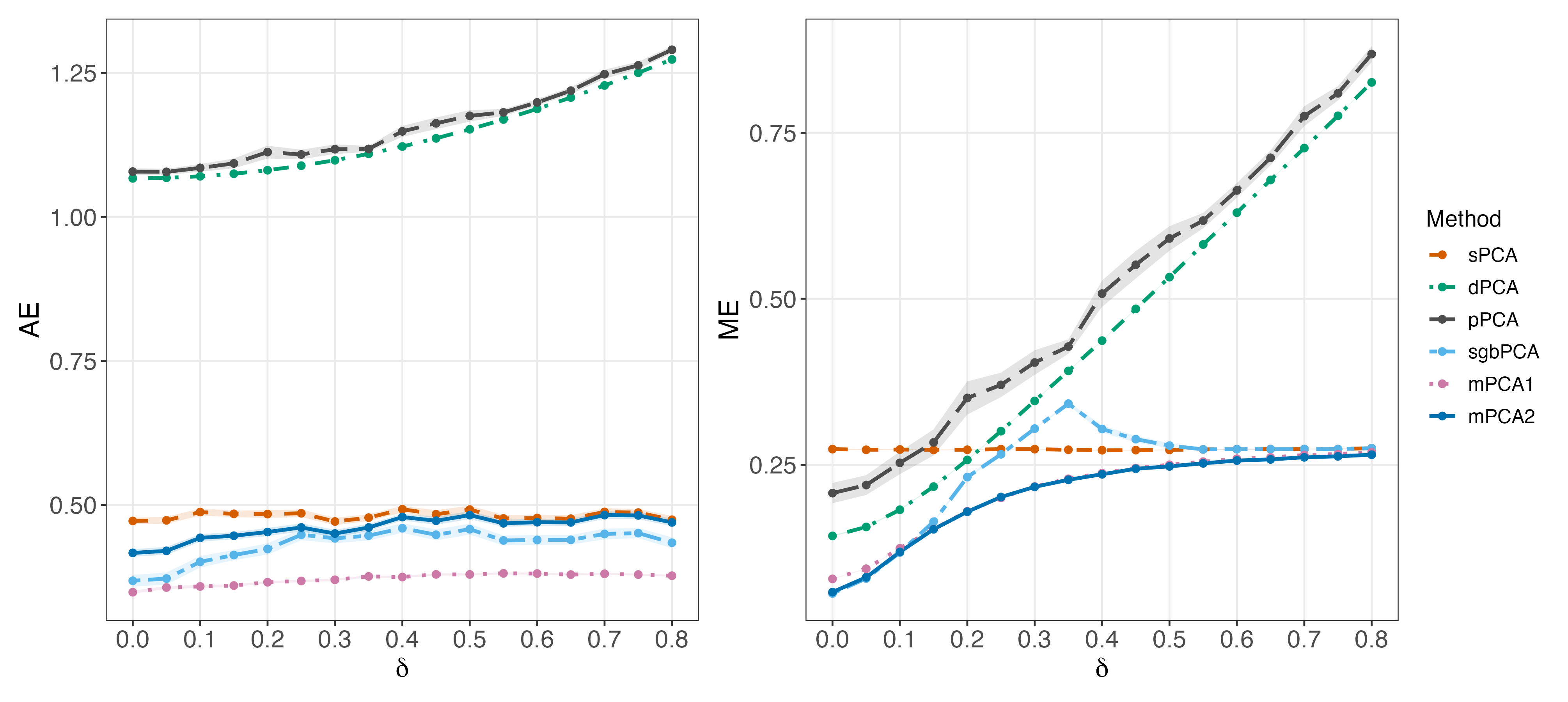}
    \caption{Effect of task heterogeneity $\delta$ with scaled
    multivariate $t_3$ outlier tasks. Left: Average error (AE). Right: Maximum error (ME). The outlier proportion is fixed
    at $5/25$.}
    \label{fig:sim1_t3}
\end{figure}

\subsection{Part II: Fixed Data Similarity with Varying Outlier Proportion}\label{subsec: sim part 2}

In this part, we examine the performance of the proposed method as the proportion of outlier tasks increases.
We fix the number of related tasks at $|\Sc|=20$ and vary the number of outlier tasks from $|\Sc^c|=0$ to $|\Sc^c|=25$.
We characterize the heterogeneity of the related tasks and outlier tasks by
\begin{equation*}
    \delta=\sup_{i\in \Sc}\|V_iV_i^T-V_0V_0^T\|_F,~~\tilde{\delta}= \sup_{i\in \Sc^c}\|V_iV_i^T-\tilde{V}_0 \tilde{V}_0^T\|_F.
\end{equation*}
We set $\delta=0.2$ and $\tilde{\delta}=1$ and generate the covariance matrices in the same manner as in Section \ref{subsec:sim part 1}.
All other simulation settings remain the same as in Part I. For sgbPCA, we set $\tau=\rho K$ and select $\rho$ from the grid $\{0.05, 0.1, \ldots, 0.95, 1\}$ using four-fold cross-validation.
We report the results for $T=3$, as performance remains largely unchanged for larger $T$.

Figure~\ref{fig:sim2_gauss} presents the results for Gaussian outlier tasks.
The AE and ME of pPCA and dPCA increase sharply as the number of outlier tasks grows, indicating substantial negative transfer.
In contrast, mPCA achieves lower AE and ME than sPCA throughout the considered range, demonstrating that it can effectively borrow information from related tasks while being robust to outlier tasks. The performance of sgbPCA is also relatively stable. It generally achieves lower errors than sPCA but higher errors than mPCA when the number of outlier tasks is small or moderate.

\begin{figure}[!htbp]
    \centering
    \includegraphics[width=1\linewidth]{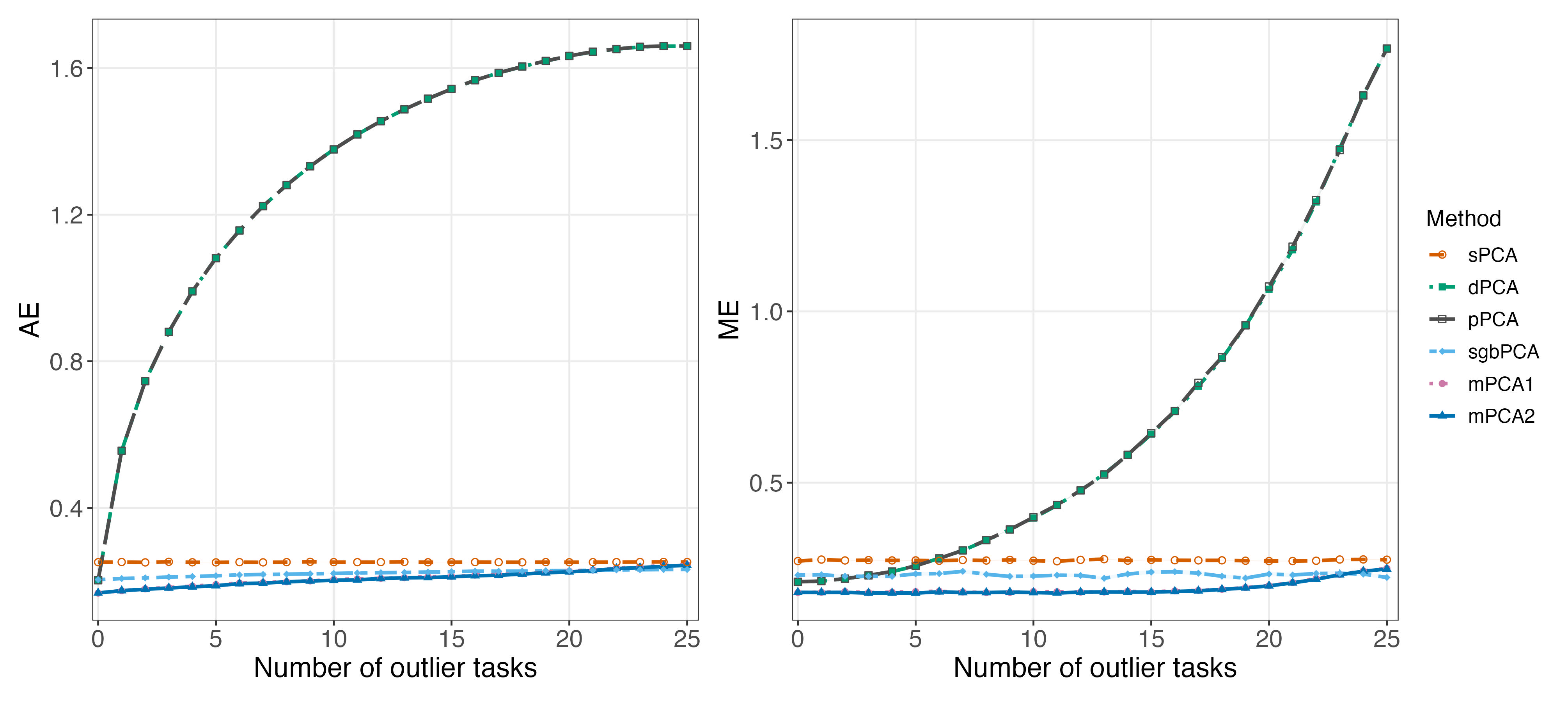}
    \caption{Effect of the number of Gaussian outlier tasks on estimation error.
    Left panel: Average error (AE).
    Right panel: Maximum error (ME).}
    \label{fig:sim2_gauss}
\end{figure}

\begin{figure}[!th]
    \centering
    \includegraphics[width=1\linewidth]{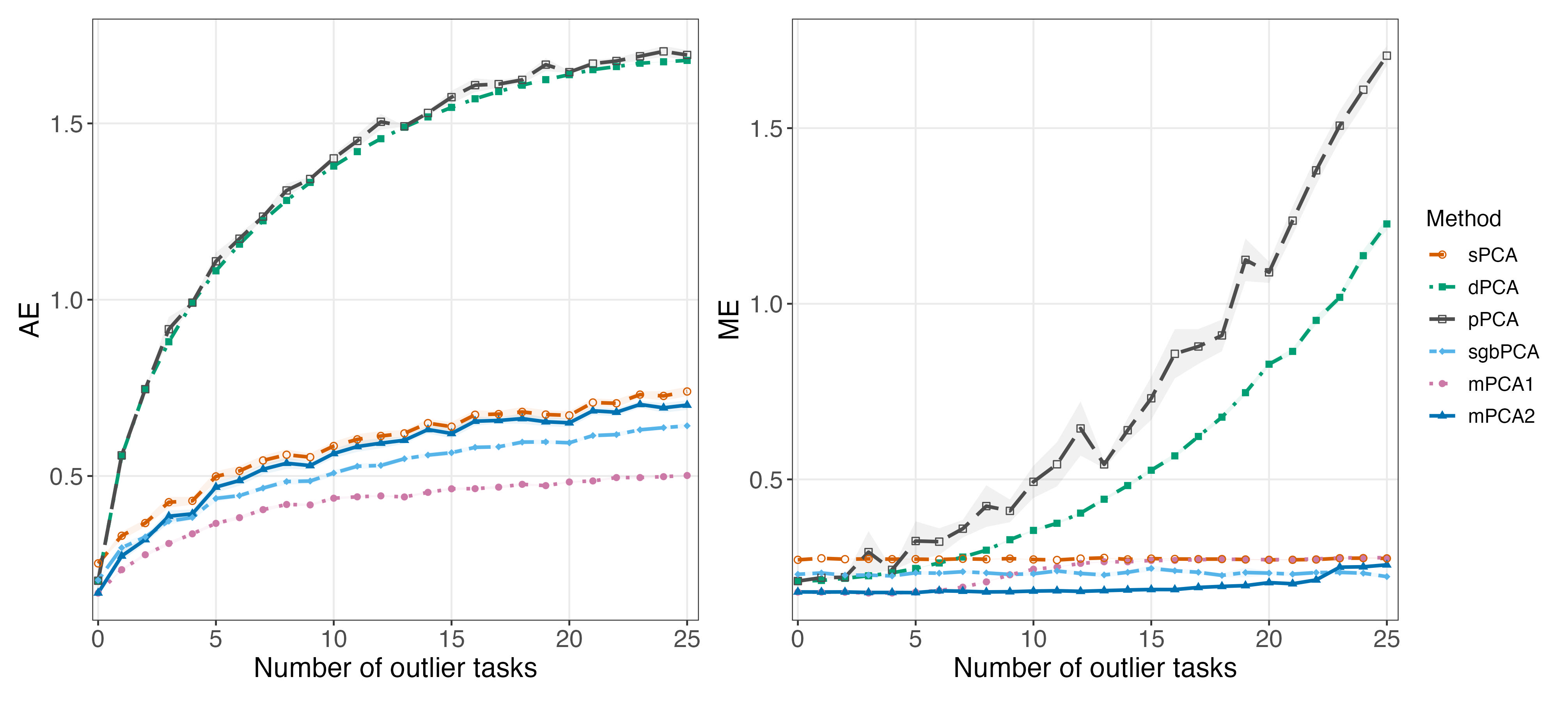}
    \caption{Effect of the number of scaled
    multivariate $t_3$ outlier tasks on estimation error. Left panel: Average error (AE).
    Right panel: Maximum error (ME).}
    \label{fig:sim2_t3}
\end{figure}

Figure~\ref{fig:sim2_t3} presents the corresponding results for heavy-tailed outlier tasks. While some of the differences become more pronounced, the general patterns remain similar to those observed in the Gaussian scenario: mPCA (the better variant) performs best overall, followed by sgbPCA, with both outperforming the three baseline methods. As observed in Part I, the two tuning strategies for mPCA exhibit different trade-offs in the heavy-tailed setting. mPCA1 provides better learning performance across all tasks, whereas mPCA2 performs better on the related tasks.

\subsection{Part III: Performance of DBMTL-PCA}
\label{subsec: sim part 3}

In this section, we evaluate the proposed depth-based multi-task learning procedure, DBMTL-PCA. Since this procedure is primarily designed to reduce the maximum error over related tasks (ME), we focus on ME in two experiments that vary the number of outlier tasks and the similarity among related tasks, respectively. We compare three methods:
\begin{enumerate}
\item mPCA: MTL-PCA with the tuning strategy based on multiple rounds of communication.
\item dbPCA: DBMTL-PCA with $B=1$, i.e., grouping all observations within each task together.
\item full dbPCA: DBMTL-PCA with $B=45$, i.e., treating each observation as a separate group.
\end{enumerate}
Both dbPCA and full dbPCA are tuned by four-fold cross-validation based on held-out explained variance.

In the first experiment, we fix the number of related tasks at $|\Sc|=30$ and vary the number
of outlier tasks over $|\Sc^c|\in\{5,10,15,20,25\}$. Data are generated as in Section \ref{subsec: sim part 2}, with parameters reset as $n=60, d=10, K=3, \sigma=1, \delta=0.1,\tilde{\delta}=0.4$. Each experimental configuration is repeated 100 times. Figure~\ref{fig:sim3 epsilon} presents the results for Gaussian
and $t_3$ outlier distributions.
In the Gaussian setting, dbPCA achieves the lowest ME for all
considered outlier counts except $|\Sc^c|=25$.
Full dbPCA performs similarly to mPCA, while the differences
among all three methods become small as the number of outlier
tasks approaches 25. The differences are more pronounced in the heavy-tailed setting.
Here, dbPCA consistently achieves the lowest ME, followed by mPCA
and full dbPCA.
The gaps between dbPCA and the other two methods widen as the
number of outlier tasks increases.
This comparison highlights the benefit of grouping observations
within tasks when constructing the matrix-depth estimator,
particularly in the presence of heavy-tailed outliers. It is consistent with the theory established in Section \ref{dbml:sec:main}.

\begin{figure}
    \centering
    \includegraphics[width=\linewidth]{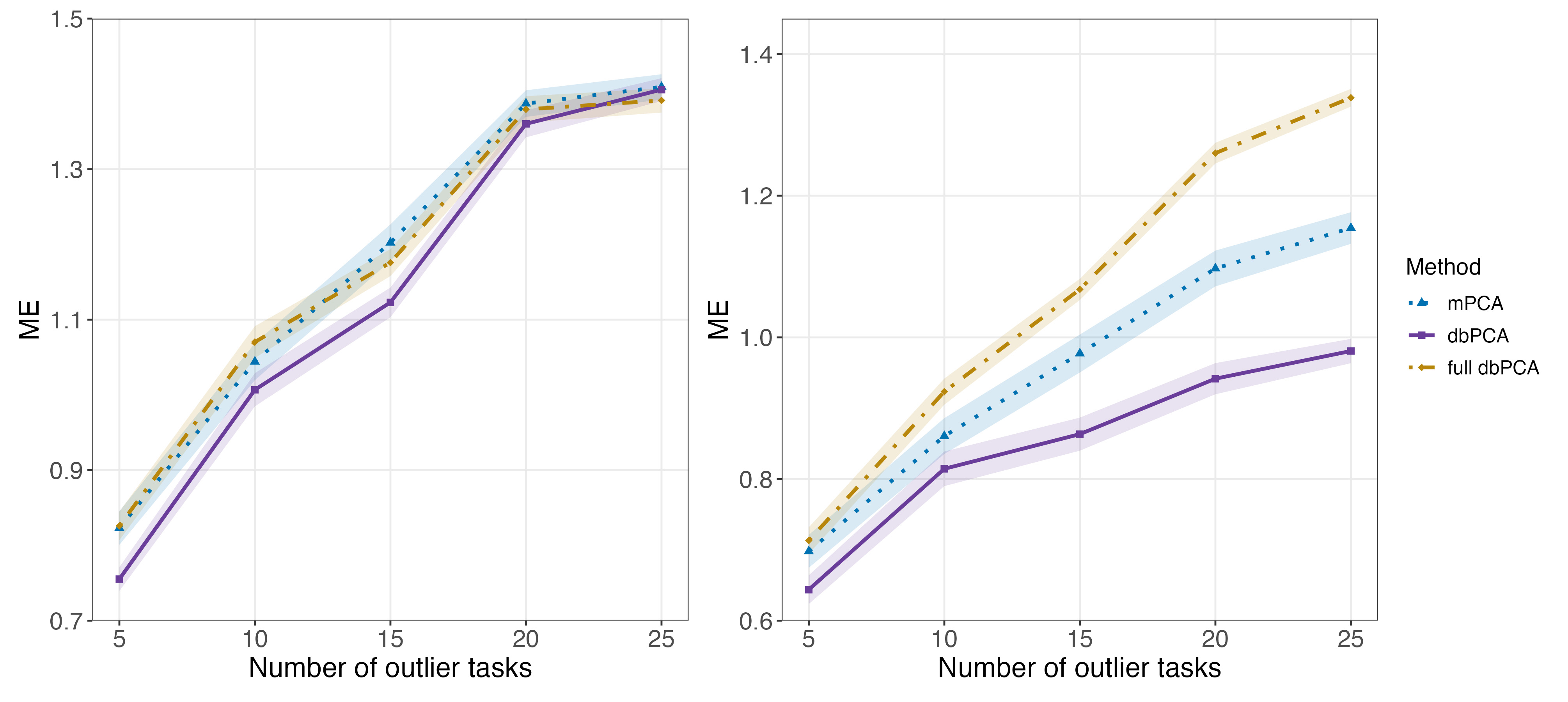}
    \caption{Effect of the number of outlier tasks on the maximum
    error over related tasks (ME).
    Left: Gaussian outlier distributions.
    Right: multivariate $t_3$ outlier distributions.
    The number of related tasks is fixed at 30.}
    \label{fig:sim3 epsilon}
\end{figure}

In the second experiment, we examine the effect of task heterogeneity in the absence
of outlier tasks. We set $|\Sc^c|=0$ and vary
$\delta\in\{0.05,0.1,\ldots,0.75,0.8\}$, while keeping all other configurations the same as in the first experiment. Figure~\ref{fig:sim3 delta} shows that ME increases with $\delta$
for all three methods as the tasks become more heterogeneous.
Nevertheless, dbPCA achieves the lowest ME throughout the
displayed range.
Its relative advantage is particularly evident at small and
moderate values of $\delta$, and it continues to outperform
mPCA and full dbPCA at larger values.
The errors of mPCA and full dbPCA are generally close, with
full dbPCA performing somewhat better at the largest values
of $\delta$.

\begin{figure}
    \centering
    \includegraphics[width=0.5\linewidth]{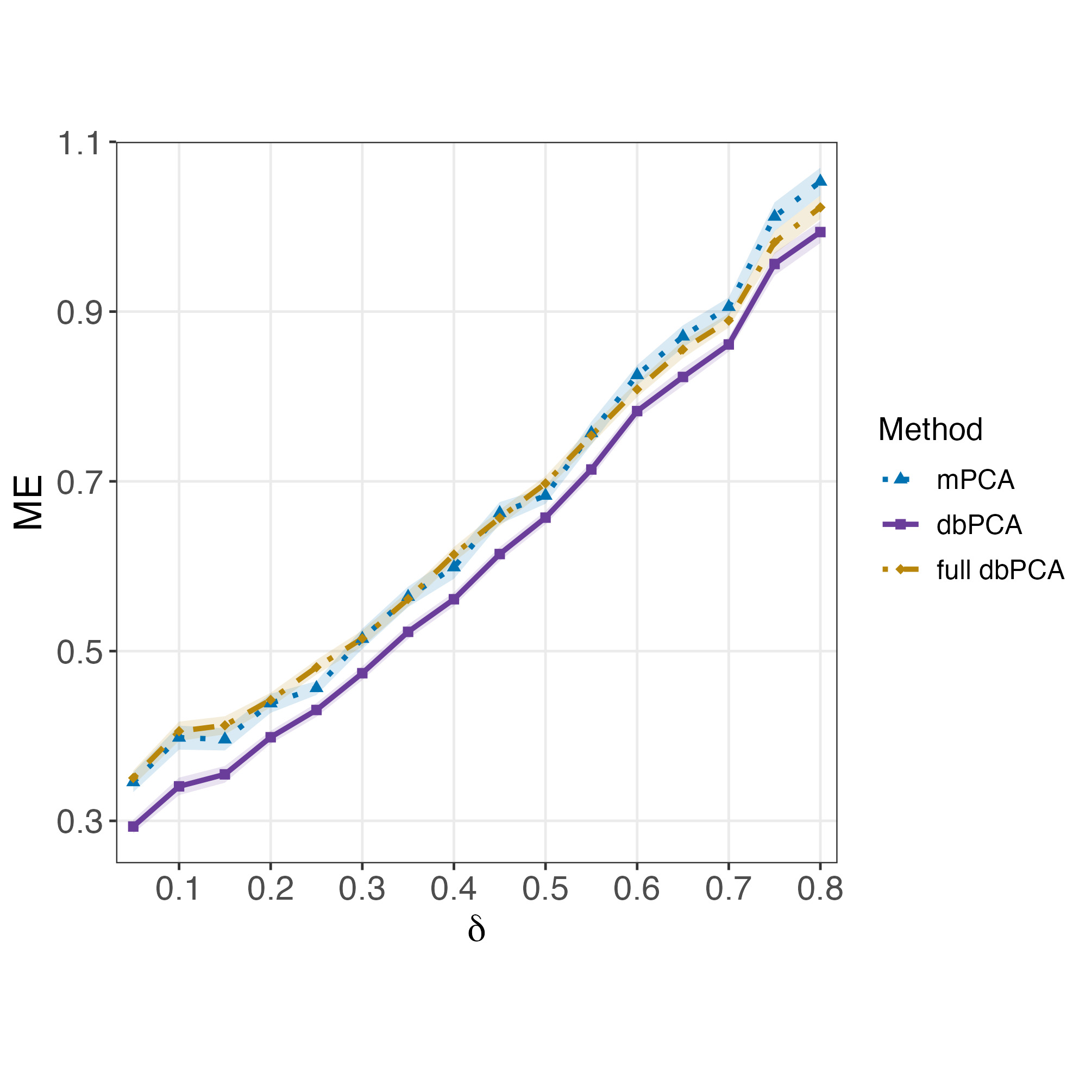}
    \caption{Effect of task heterogeneity $\delta$ on the maximum
    error over related tasks (ME) for 30 Gaussian tasks
    without outlier tasks.}
    \label{fig:sim3 delta}
\end{figure}

Together, these experiments demonstrate the effectiveness of DBMTL-PCA across different levels of task heterogeneity and contamination. The improvement of dbPCA over full dbPCA highlights the importance of
grouping in the matrix-depth construction, particularly under
heavy-tailed outlier distributions.
The advantage of dbPCA over mPCA is consistent with the theoretical analyses in Sections \ref{section:minimax} and \ref{dbml:sec:main}.

\subsection{Tuning strategy}\label{subsec: tuning stra}

MTL-PCA depends on task-specific regularization parameters
\(\lambda_1,\ldots,\lambda_m\). Theorem \ref{theorem:upperbound} shows that a common choice $\lambda_i=\lambda, i\in [m]$ can yield desirable convergence rates. We can treat $\lambda$ as a single tuning parameter and select it via cross validation. However, practically we may prefer a tuning scheme that is adaptive to the task heterogeneity. We therefore propose a more adaptive tuning approach. Specifically, for each task \(i\), we first construct a task-specific pilot scale \(\tilde\lambda_i\), and then tune a common multiplicative constant \(C\) over a grid \(\mathcal C\):
\[
    \lambda_i(C)=C\tilde\lambda_i,\qquad i=1,\ldots,m.
\]
This approach allows \(\tilde\lambda_i\) to vary across tasks, thereby adapting the amount of regularization to the task-specific difficulty. Recall that MTL-PCA is in fact a distributed algorithm which does not require full access to the data. Depending on the number of rounds of communications allowed between the central server and local datasets, we develop two different tuning strategies.

\paragraph{Tuning with one-round communication.} For each task \(i\), split the rows of \(X_i\) into \(B=4\) approximately equal parts. Let \(\hat V_{i,b}\in \mathbb{R}^{d\times K}\) denote the top \(K\) eigenvectors computed from the \(b\)-th split, \(b=1,\ldots,B\). The summary statistics $\{\hat V_{i,b}\}_{b=1}^B$ from each local dataset are then sent to the central server. The remaining steps are performed on the central server. 

For each validation fold \(b\in [B]\), compute the top $K$ eigenvectors,$\bar V_{i,-b}$, of 
\begin{equation*}
    \frac{1}{B-1}\sum_{l\neq b} \hat V_{i,l}(\hat V_{i,l})^T.
\end{equation*}
For a candidate value \(C\in\mathcal C\), we run the aggregation step \eqref{key:interp} with \(\hat{A}_i=\bar V_{i,-b}\bar V_{i,-b}^T\) and regularization parameters
\(\lambda_i(C)=C\tilde\lambda_i\) to obtain $\{\tilde{A}_i\}_{i\in [m]}$. Let \(\widetilde V_{i,-b}(C)\in \mathbb{R}^{d\times K}\) denote the top $K$ eigenvectors of $\tilde{A}_i$. We define the cross-validation error as
\[
    \operatorname{CV}_{\mathrm{or}}(C)
    =
    \frac{1}{mB}
    \sum_{i=1}^m \sum_{b=1}^B
    \left\|
        \widetilde V_{i,-b}(C) ( \widetilde V_{i,-b}(C))^T
        -
        \hat V_{i,b}(\hat V_{i,b})^T
    \right\|_F,
\]
and select
\[
    \widehat C_{\mathrm{or}}
    =
    \arg\min_{C\in\mathcal C}
    \operatorname{CV}_{\mathrm{or}}(C).
\]
Regarding the choice of $\tilde\lambda_i$, the proof of Theorem~\ref{theorem:upperbound} suggests that a suitable regularization level for task \(i\) should scale as $\sqrt{K\kappa_i^2 r_i/n_i}$ which is the statistical rate of single-task PCA for the \(i\)-th data matrix. We thus set
\[
    \tilde\lambda_i
    =
    \frac{1}{B}
    \sum_{b=1}^B
    \left\|
        \bar V_{i,-b}(\bar V_{i,-b})^T
        -
        \hat V_{i,b} (\hat V_{i,b})^T
    \right\|_F .
\]
Based on Theorem 4 of \cite{fan2019distributed} and the triangle inequality 
\[
\|\bar V_{i,-b}(\bar V_{i,-b})^T-\hat V_{i,b} (\hat V_{i,b})^T\|_F\leq \|
        \bar V_{i,-b}(\bar V_{i,-b})^T-V_iV_i^T
    \|_F+\|V_iV_i^T-\hat V_{i,b} (\hat V_{i,b})^T\|_F,
\]
we can see that the quantity $\tilde\lambda_i$ has the desirable order $\sqrt{K\kappa_i^2 r_i/n_i}$. It is larger for tasks whose single-task eigenspace estimates are less accurate, and therefore assigns a larger regularization scale to more difficult tasks. After \(\widehat C_{\mathrm{or}}\) is selected, we take the top $K$ eigenvectors, $\hat V_i^{\mathrm{agg}}$, of $\frac{1}{B}\sum_{b=1}^B \hat V_{i,b} (\hat V_{i,b})^T$ and run step \eqref{key:interp} with $\hat{A}_i=\hat V_i^{\mathrm{agg}}(\hat V_i^{\mathrm{agg}})^T$ and $\lambda_i=\widehat C_{\mathrm{or}}\tilde\lambda_i$ to obtain $\{\tilde{A}_i\}_{i\in [m]}$. The top $K$ eigenvectors of $\tilde{A}_i$ is finally sent back to the $i$ task.

\paragraph{Tuning with multiple-round communications.}

If multiple-round communications are allowed, some calculations for the cross-validation error can be conducted locally using local data instead of the summary statistics. For each task \(i\), split the rows of \(X_i\) into \(B=4\) folds. Let \(X_{i,b}\) denote the data in fold \(b\), and let \(X_{i,-b}\) denote the remaining data. Let \(\hat V_{i,-b}\) and \(\hat V_{i}\) be the top \(K\) eigenvectors computed from \(X_{i,-b}\) and \(X_{i}\), respectively. Each local server first computes a task-specific pilot scale based on the held-out unexplained variance:
\[
    \tilde\lambda_i
    =
    \frac{1}{B}
    \sum_{b=1}^B
    \left\{
    1-
    \frac{
    \operatorname{tr}\!\left(
        X_{i,b}^T X_{i,b}
        \hat V_{i,-b} \hat V_{i,-b}^T
    \right)
    }{
    \operatorname{tr}\!\left(
        X_{i,b}^T X_{i,b}
    \right)
    }
    \right\}.
\]
The summary statistics $\{\tilde\lambda_i,\hat{V}_i,\hat V_{i,-b},b\in [B]\}$ from each local dataset are sent
to the central server. For each candidate \(C\in\mathcal C\) and $b\in [B]$, the central server runs step \eqref{key:interp} with \(\hat{A}_i=\hat V_{i,-b}\hat V_{i,-b}^T\) and regularization parameters
\(\lambda_i(C)=C\tilde\lambda_i\), and let \(\widetilde V_{i,-b}(C)\) denote the resulting top $K$ eigenvectors. The central server sends back $\big\{\widetilde V_{i,-b}(C),C\in\mathcal C, b\in [B]\big\}$ to each local dataset $i$. The $i$th local server then calculates the $i$th part of the cross-validation error based on the proportion of variance explained:
\[
    \operatorname{CV}_{i}(C)
    =\frac{1}{B}\sum_{b=1}^B
    \frac{
    \operatorname{tr}\!\left(
        X_{i,b}^T X_{i,b}
        \widetilde V_{i,-b}(C) (\widetilde V_{i,-b}(C))^T
    \right)
    }{
    \operatorname{tr}\!\left(
        X_{i,b}^T X_{i,b}
    \right)
    }.
\]
Next, the error scores $\{\operatorname{CV}_{i}(C),C\in\mathcal C \}$ from the $ith$ local data are sent to the central server. The central server computes
\[
    \operatorname{CV}_{\mathrm{mr}}(C)
    =
    \frac{1}{m}
    \sum_{i=1}^m 
    \operatorname{CV}_{i}(C),
\]
and select
\[
    \widehat C_{\mathrm{mr}}
    =
    \arg\max_{C\in\mathcal C}
    \operatorname{CV}_{\mathrm{mr}}(C).
\]
This type of held-out explained variance criterion is commonly used to tune PCA-based estimators (e.g. \cite{yamane2016multitask}). After selecting \(\widehat C_{\mathrm{rm}}\), the central server runs step \eqref{key:interp} with \(\hat{A}_i=\hat V_{i}\hat V_{i}^T\) and \(\lambda_i=\widehat C_{\mathrm{rm}}\tilde\lambda_i\) to obtain $\{\tilde{A}_i\}_{i\in [m]}$. The top $K$ eigenvectors of $\tilde{A}_i$ is finally sent back to the $i$ task. Note that we may communicate the $b$- and $C$-related quantities using more rounds to reduce the communication cost per round.

\paragraph{Simulation comparison.} To demonstrate the advantage of our adaptive tuning approach over the non-adaptive one, we compare four tuning strategies: 
\begin{itemize}
\item Adaptive tuning with one-round communication (AT-or): our proposed strategy 
\item Non-adaptive tuning with one-round communication (NAT-or): set $\tilde{\lambda}_i=\lambda,i\in [m]$ and follow the remaining steps of AT-or 
\item Adaptive tuning with multiple-round communication (AT-mr): our proposed strategy 
\item Non-adaptive tuning with multiple-round communication (NAT-mr): set $\tilde{\lambda}_i=\lambda,i\in [m]$ and follow the remaining steps of AT-mr
\end{itemize}

We adopt the simulation setting of Section~\ref{subsec:sim part 1}, except that we replace the common
spike magnitude with task-specific spike magnitudes.
Specifically, we consider the spiked covariance model
\begin{equation*}
    \Sigma_i=V_i\Lambda_iV_i^T+I_d,
\end{equation*}
where
$\Lambda_i=\operatorname{diag}(\sigma_{i1},\ldots,\sigma_{iK})
\in\Rb{K\times K}$.
For each task $i$, we independently draw
$s_i\sim\operatorname{Unif}[5,20]$ and, conditional on $s_i$,
draw $\sigma_{ik}\sim\operatorname{Unif}[s_i/2,s_i]$
independently for $k\in[K]$. The task-specific spike magnitudes are held fixed across
replications, and each configuration is repeated 100 times.

Figure~\ref{fig:sim4 ME} shows the maximum error over related tasks. In the Gaussian setting, adaptive and non-adaptive tuning perform similarly when $\delta$ is small.
In the heavy-tailed setting, AT-mr and NAT-mr also perform similarly at small values of $\delta$, whereas NAT-or has lower ME than AT-or at the smallest values. Starting from $\delta=0.2$, each adaptive strategy achieves lower ME than its non-adaptive counterpart. The improvement is most pronounced at intermediate levels of
task heterogeneity, particularly for the one-round strategy. These results suggest that allowing the regularization scales
to vary according to task-specific information improves eigenspace estimation across a broad range of heterogeneity levels.

\begin{figure}
    \centering
    \includegraphics[width=1\linewidth]{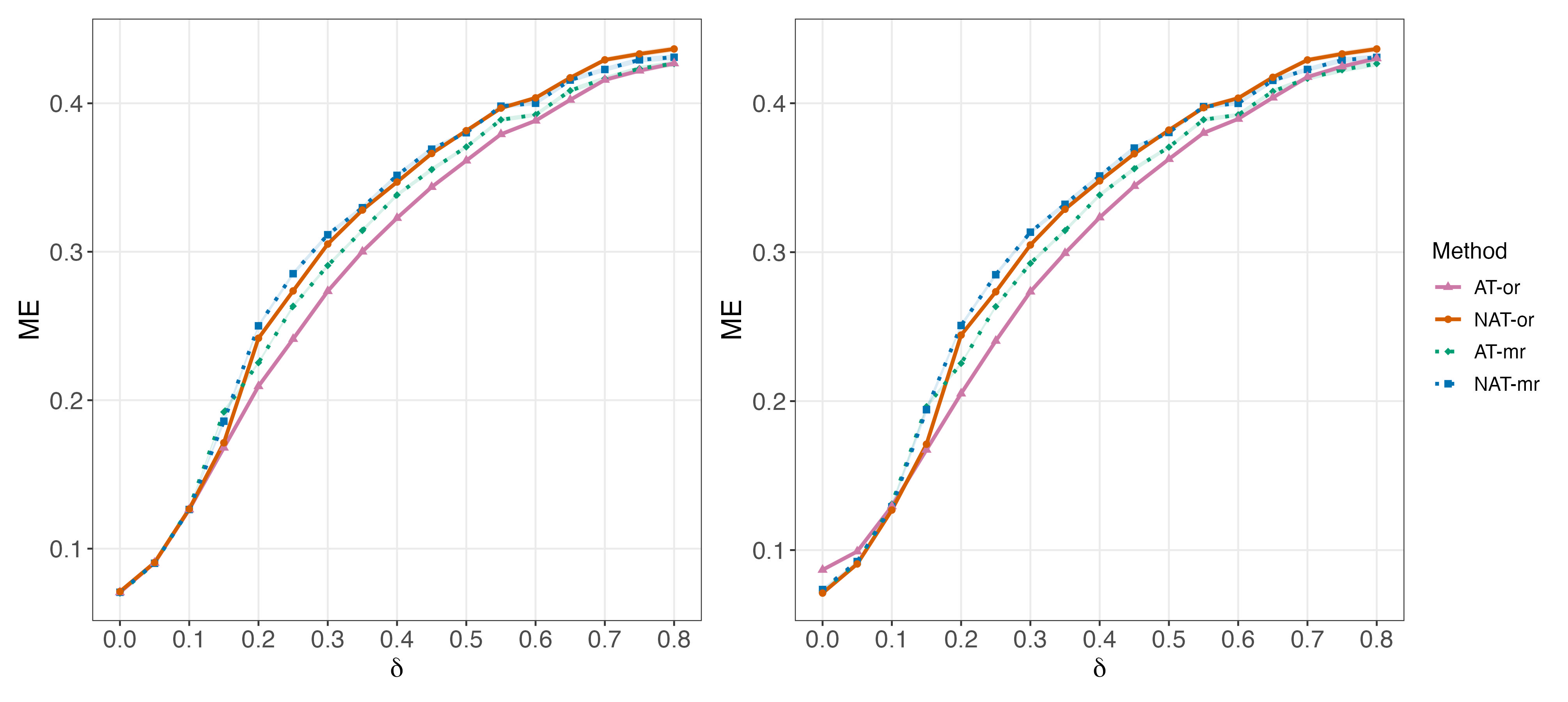}
    \caption{Comparison of adaptive and non-adaptive tuning strategies. Left: Gaussian outlier tasks. Right: scaled multivariate  $t_3$ outlier tasks.}
    \label{fig:sim4 ME}
\end{figure}

\section{Real data analysis}\label{sec:real data}
In this section, we verify the effectiveness of the proposed methods, MTL-PCA and DBMTL-PCA, on real datasets.

\subsection{Human activity recognition (HAR)}
\label{subsec: RDA HAR}

The \emph{Human Activity Recognition Using Smartphones} dataset
\cite{anguita2013public}, available from the
\href{https://archive.ics.uci.edu/dataset/240/human+activity+recognition+using+smartphones}{UCI Machine Learning Repository},
contains smartphone sensor measurements collected from 30 volunteers
performing six activities. Each observation is represented by a
561-dimensional feature vector, and the number of observations per
volunteer ranges from 281 to 409.
PCA and its variants have been widely used for dimension reduction
in human activity recognition, where sensor measurements are often
represented by high-dimensional feature vectors
\cite{chen2017robust,hassan2018robust,lerman2018overview}.
Dimension reduction is particularly important under a subject-specific
multi-task formulation, in which each volunteer is treated as a
separate task \cite{tian2022robust,duan2023adaptive,li2019online}.
In this setting, the number of observations per task is smaller than
the feature dimension, making accurate estimation of the task-specific
population eigenspaces challenging.

We treat the 30 volunteers as separate tasks and focus on two activities: standing and lying down. The sample size of each task varies from
95 to 179, while the feature dimension remains 561.
In each replication, we randomly choose 20\% of the observations
from each task for testing and use the remaining 80\% for estimation.
We fix the embedding dimension at $K=15$ and repeat the experiment
50 times.
For task \(i\), let \(X_i\) denote the test-data matrix and let $\hat{V}_i$ be the eigenspace estimator based on the training data. We use average explained variance and minimum explained variance to evaluate the performance of each method:
\begin{align*}
    \text{Avg. EV}
    &= \frac{1}{30}\sum_{i=1}^{30}\frac{\|X_i \hvvt{i}{}\|_F^2}{\|X_i\|_F^2},   \quad   \text{Min. EV}= \min_{1\leq i\leq 30}\frac{\|X_i \hvvt{i}{}\|_F^2}{\|X_i\|_F^2}.
\end{align*}
We compare the six methods from Section \ref{sec:numerical studies}: sPCA, dPCA, pPCA, sgbPCA, mPCA, and dbPCA. For sgbPCA, we parameterize the source-selection threshold as
$\tau=\rho K$ and select $\rho$ by four-fold cross-validation over
the grid $\{0.05,0.1,\ldots,0.95,1\}$.
Performance stabilized at $T=3$, with negligible changes for larger
iteration counts in our experiments. We therefore report the results for $T=3$. The tuning strategies for mPCA and dbPCA are the same as those in Section \ref{subsec: sim part 3}.

\begin{figure}[!t]
    \centering
    \includegraphics[height=7.cm, width=15.5cm]{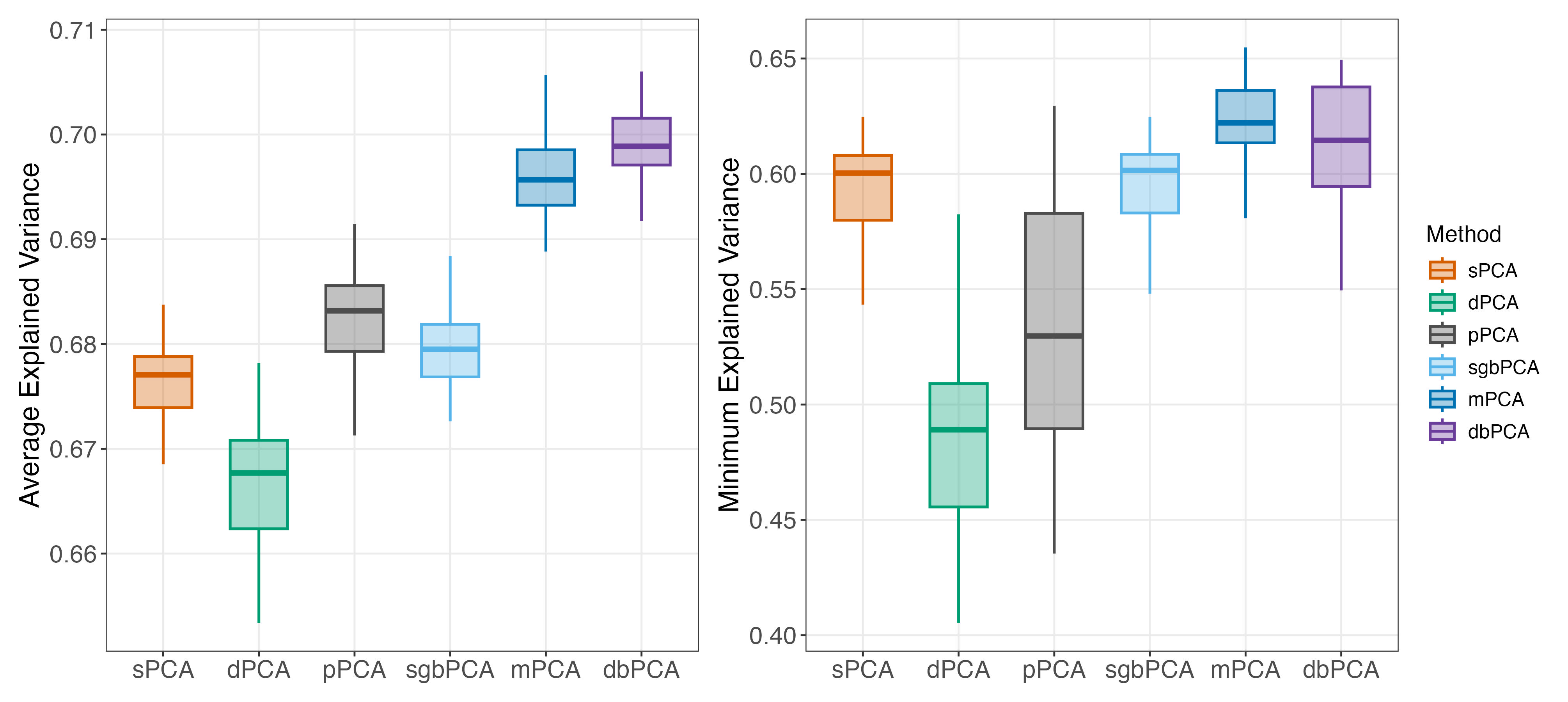}
    \caption{Average (left) and minimum (right) held-out explained
    variance across the 30 tasks in the two-activity HAR experiment.
    Each box plot summarizes the corresponding metric over
    50 replications.}
    \label{fig:AE_ME_HAR_2Act}
\end{figure}

\begin{table}[!thbp]
    \centering
    \caption{Average and minimum held-out explained variance in
    the two-activity HAR experiment. Entries are means $\pm$ standard errors over 50 replications.}
    \label{tab:har}
    \vspace{0.2cm}
    \begin{tabular}{lcc}
        \toprule
        Method & Avg. EV & Min. EV \\
        \midrule
        sPCA
        & $0.6766 \pm 0.0006$
        & $0.5915 \pm 0.0033$ \\
        dPCA
        & $0.6668 \pm 0.0008$
        & $0.4845 \pm 0.0058$ \\
        pPCA
        & $0.6827 \pm 0.0007$
        & $0.5346 \pm 0.0076$ \\
        sgbPCA
        & $0.6795 \pm 0.0006$
        & $0.5926 \pm 0.0033$ \\
        mPCA
        & $0.6962 \pm 0.0005$
        & $\mathbf{0.6214 \pm 0.0029}$ \\
        dbPCA
        & $\mathbf{0.6994 \pm 0.0005}$
        & $0.6135 \pm 0.0037$ \\
        \bottomrule
    \end{tabular}
\end{table}

Figure~\ref{fig:AE_ME_HAR_2Act} shows the distributions of average
and minimum explained variance over the 50 replications.
Table~\ref{tab:har} reports the corresponding means and standard errors. It can be seen that the two proposed methods, mPCA and dbPCA, outperform competing methods by a considerable margin. By contrast, competing methods show different trade-offs across the two criteria. Although pPCA improves average explained variance relative to sPCA, it has substantially lower minimum explained variance. sgbPCA provides a modest improvement in average explained variance while maintaining minimum explained variance close to that of sPCA.

Finally, Figure~\ref{fig:boxplot_TaskAE_HAR_2Act} shows the
distributions across the 30 task-specific explained variances averaged over the 50 replications. The distributions for mPCA and dbPCA have the highest medians and
are generally shifted upward relative to those of the competing
methods. This comparison complements the aggregate results of Figure \ref{fig:AE_ME_HAR_2Act} and Table \ref{tab:har}, by showing that the advantages of mPCA and dbPCA are also visible in
the distribution of performance across volunteers.

\begin{figure}[h]
    \centering
    \includegraphics[height=7.cm, width=14cm]{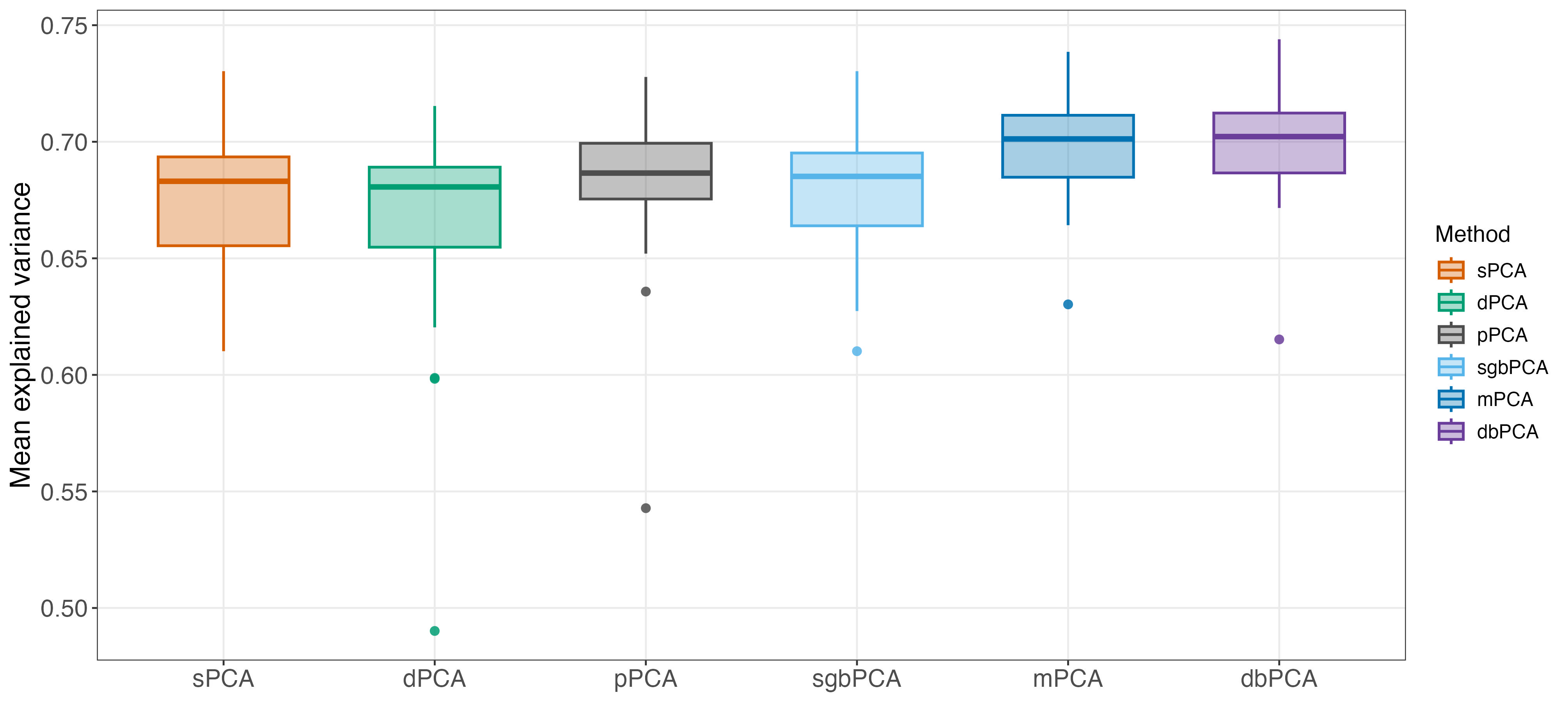}
    \caption{Distributions of task-specific held-out explained
    variance in the two-activity HAR experiment.
    Each box plot summarizes the 30 task-specific values obtained
    by averaging explained variance over 50 replications.}
    \label{fig:boxplot_TaskAE_HAR_2Act}
\end{figure}


\subsection{Federated Extended MNIST (FEMNIST)}
\label{subsec:RDA_FEMNIST}

The \emph{Federated Extended MNIST} (FEMNIST) dataset is a
writer-partitioned collection of handwritten digits and letters included
in the LEAF benchmark for federated learning \cite{caldas2018leaf}.
We use the Flower release
of FEMNIST (\href{https://huggingface.co/datasets/flwrlabs/femnist}{https://huggingface.co/datasets/flwrlabs/femnist}), in which each observation consists of a $28\times 28$
grayscale image and its associated writer and character labels.
The dataset contains 62 character classes: 10 digits, 26 uppercase
letters, and 26 lowercase letters. Its natural partition by writer
provides a multi-task setting in which the tasks may differ in both
handwriting style and character composition.

We select a fixed set of 25 writers and
treat each writer as a separate task. The resulting task-specific sample
sizes range from 308 to 436. We retain all available character classes
and vectorize each image, giving a feature dimension of $d=784$.
In each replication, we randomly reserve 20\% of the observations from
each writer for testing and use the remaining 80\% for estimation.
The split is constructed to approximately preserve the character
composition of each writer. We set the embedding dimension to $K=30$
and repeat the experiment 50 times, using the same train--test splits
for all methods.

We adopt the same error metrics as in Section \ref{subsec: RDA HAR} to evaluate the six methods. Figure~\ref{fig:AE_ME_FEMNIST} displays the distributions of average
and minimum explained variance over the 50 replications, and
Table~\ref{tab:femnist} reports the corresponding means and standard errors. Compared to the HAR dataset in Section \ref{subsec: RDA HAR}, the FEMNIST dataset exhibits greater task heterogeneity, causing non-adaptive aggregation methods, including pPCA and dPCA, to suffer from negative transfer. In contrast, the proposed methods, mPCA and dbPCA, outperform sgbPCA and yield considerable improvements over sPCA, in terms of average explained variance. 

\begin{figure}[!t]
    \centering
    \includegraphics[height=7.cm, width=15.5cm]{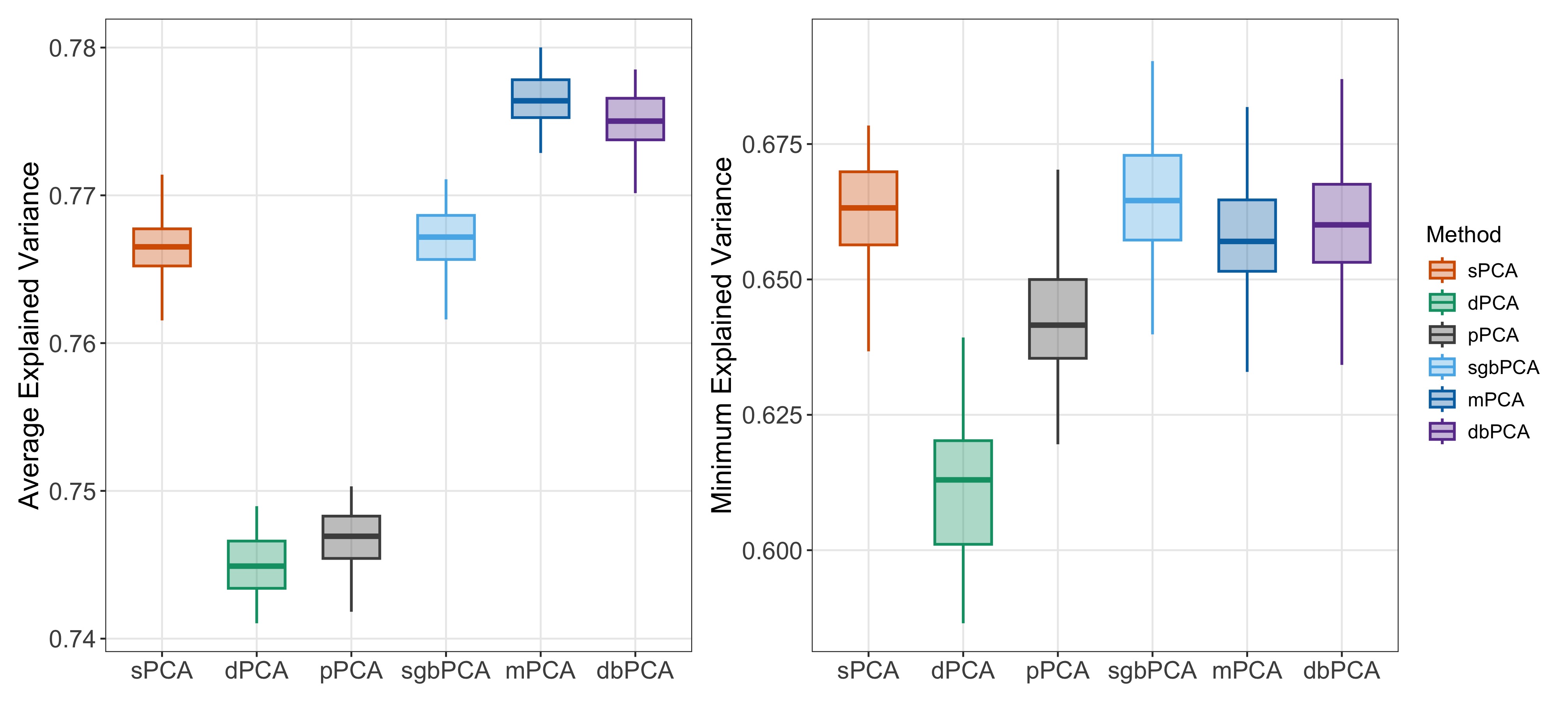}
    \caption{Average (left) and minimum (right) held-out explained
    variance across the 25 writers in the FEMNIST experiment.
    Each box plot summarizes the corresponding metric over
    50 matched replications.}
    \label{fig:AE_ME_FEMNIST}
\end{figure}

\begin{table}[htbp]
    \centering
    \caption{Average and minimum held-out explained variance in the FEMNIST experiment. Entries are means
    $\pm$ standard errors over 50 replications.}
    \label{tab:femnist}
    \vspace{0.2cm}
    \begin{tabular}{lcc}
        \toprule
        Method & Avg. EV & Min. EV \\
        \midrule
        sPCA
        & $0.7666 \pm 0.0003$
        & $0.6624 \pm 0.0014$ \\
        dPCA
        & $0.7449 \pm 0.0003$
        & $0.6115 \pm 0.0020$ \\
        pPCA
        & $0.7469 \pm 0.0003$
        & $0.6427 \pm 0.0019$ \\
        sgbPCA
        & $0.7672 \pm 0.0003$
        & $\mathbf{0.6649 \pm 0.0015}$ \\
        mPCA
        & $\mathbf{0.7763 \pm 0.0003}$
        & $0.6578 \pm 0.0019$ \\
        dbPCA
        & $0.7751 \pm 0.0003$
        & $0.6606 \pm 0.0019$ \\
        \bottomrule
    \end{tabular}
\end{table}

The plot of minimum explained variance (calculated across all tasks) suggests that dominant outlier tasks may exist and essentially drive the results. Indeed, the best performer, sgbPCA (treating those tasks as targets), yields only modest improvements over sPCA. While our methods do not achieve better worst-case task performance than sPCA, they perform better on most tasks, as shown in Figure \ref{fig:boxplot_TaskAE_FEMNIST}. The central portions of the mPCA and dbPCA
distributions are shifted upward relative to those of the competing methods, and these two methods have the highest medians across writers.

\begin{figure}[!thbp]
    \centering
    \includegraphics[height=7.cm, width=14cm]{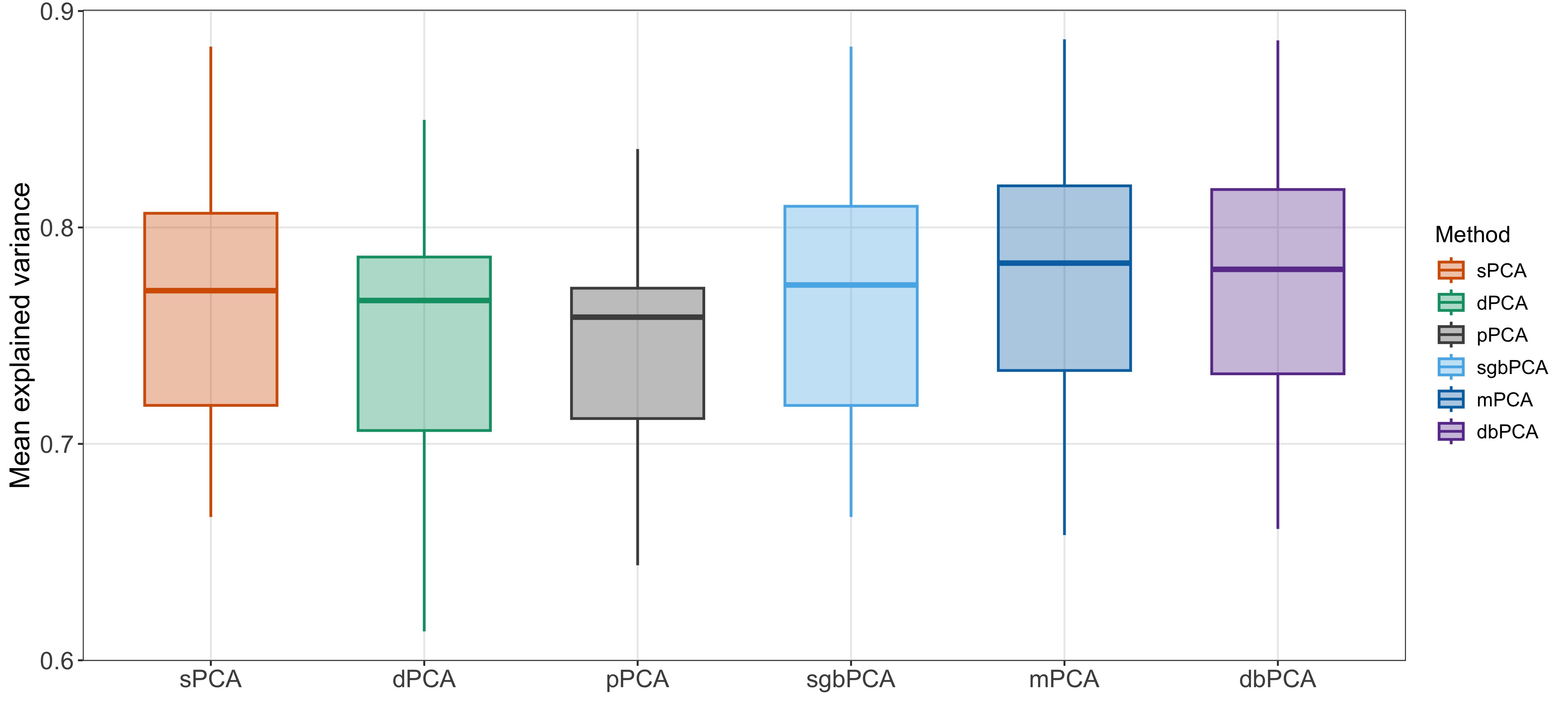}
    \caption{
    Distributions of task-specific held-out explained variance in the FEMNIST experiment.
    Each box plot summarizes the 25 writer-specific values obtained by averaging explained variance over 50 replications.
    }
    \label{fig:boxplot_TaskAE_FEMNIST}
\end{figure}


\section{Discussions}\label{sec:discussion}
In this paper, we develop two adaptive and robust multi-task PCA procedures based on a two-stage framework and further characterize their minimax optimality under suitable conditions. Extensive numerical studies demonstrate favorable eigenspace estimation performance of our approach. 

Several important directions remain for future investigation. First, our depth-based procedure has only been shown to attain minimax optimality in the bounded signal-to-noise ratio regime, primarily because the derived bound depends on the ambient dimension. Some recent work has studied trimmed-mean-type covariance matrix estimators that can achieve dimension-free bounds in single-task learning \cite{oliveira2024improved,abdalla2024covariance,minasyan2025statistically}. It is worthwhile to incorporate their ideas into our two-stage framework to develop multi-task PCA procedures that enjoy stronger optimality guarantees. Second, our depth-based method and the aforementioned trimmed-mean-type approaches all rely on algorithms with no global optimality guarantees. It is of fundamental interest to investigate polynomial-time algorithms that achieve minimax optimality under data contamination. Some recent progress has been made for other multi-task learning problems \cite{tian2026contaminated}, yet much remains to be explored for the PCA setting.

\section{Proofs of technical results}\label{sec:proofs}

\subsection{Proof of Proposition \ref{theorem:structure}}\label{subsec:prop1}

\begin{proof}
For a fixed $A$, define 
\begin{equation*}
 f_A(A_i)= \frac{1}{2}\|A_i-\hat{A}_i\|_F^2+\lambda_i\|A_i-A\|_F.   
\end{equation*}
Then the subdifferential takes the form
\begin{equation*}
 \partial f_A(A_i)=\begin{cases}
  A_i-\hat{A}_i+\lambda_i\frac{A_i-A}{\|A_i-A\|_F},A_i\neq A,\\
  A_i-\hat{A}_i+\lambda_i W, A_i=A,
 \end{cases} 
 \end{equation*}
where $W\in \mathbb{R}^{d\times d}$ is symmetric with $\|W\|_F\leq 1$. To get
\begin{equation*}
0\in \partial f_A(A_i),
\end{equation*}
we derive 
\begin{equation*}
 A_i=\begin{cases}
     A, \| A-\hat{A}_i\|_F\leq \lambda_i,\\
     (1-\alpha)\hat{A}_i+\alpha  A, \text{otherwise,}
 \end{cases}   
 \end{equation*}
 where  $\alpha=\frac{\lambda_i}{\|\hat{A}_i- A\|_F}$. Plugging the above solution into $f_A(A_i)$, we have
 \begin{equation*}
     \min_{A_i} f_A(A_i)=\rho_{\lambda_i}(\|A-\hat{A}_i\|_F).
 \end{equation*}
Thus, we can profile out $A_i$ in \eqref{key:interp} to obtain
\begin{align*}
\tilde{A}=  \argmin_{A} \sum_{i=1}^m\rho_{\lambda_i}(\|A-\hat{A}_i\|_F). 
\end{align*}    
\end{proof}

\subsection{Proof of Theorem \ref{theorem:upperbound}}\label{subsec:thm1}
Throughout the proof, the uppercase letters $C, C_1,C_2,\ldots$ denote constants that might only depend on the constant $M$ from Definition \ref{Def:subgaussian}, and their values can change at each occurrence. We recall that the notation $a_n\lesssim b_n$ means there exists a constant $C$ (which may depend on $M$) such that $a_n\leq C b_n$.

\subsubsection{Useful lemmas}
We first introduce several useful lemmas used in the proof of Theorem \ref{theorem:upperbound}.
\begin{lemma}[Weyl's inequality]\label{ine:weyl}
Given two compatible symmetric matrices $A$ and $B$, we have
\begin{equation}
|\sigma_i(A)-\sigma_i(B)|\leq \|A-B\|.
\end{equation}
\end{lemma}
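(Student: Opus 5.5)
Weyl's inequality is classical, and I would prove it with the Courant--Fischer min-max characterization of eigenvalues. For a symmetric $A\in\mathbb{R}^{p\times p}$ with eigenvalues ordered decreasingly,
\begin{equation*}
\sigma_i(A)=\max_{\dim(\mathcal{U})=i}\ \min_{x\in\mathcal{U},\,\|x\|_2=1} x^TAx .
\end{equation*}
The whole argument rests on comparing the Rayleigh quotients of $A$ and $B$ pointwise.

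First, for any unit vector $x$ we have $x^TAx = x^TBx + x^T(A-B)x$. Since $|x^T(A-B)x|\le \|A-B\|$ by the definition of the spectral norm, this gives
\begin{equation*}
x^TAx\le x^TBx+\|A-B\|.
\end{equation*}
Next, fix an $i$-dimensional subspace $\mathcal{U}$ and take the minimum over unit $x\in\mathcal{U}$ on both sides. Because the additive constant does not depend on $x$, the inequality survives this step. Taking the maximum over all such $\mathcal{U}$ then gives $\sigma_i(A)\le\sigma_i(B)+\|A-B\|$.

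Finally, exchanging the roles of $A$ and $B$ gives $\sigma_i(B)\le \sigma_i(A)+\|A-B\|$. Combining the two inequalities yields $|\sigma_i(A)-\sigma_i(B)|\le\|A-B\|$.

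There is no real obstacle here. The only point that needs care is that a min-max inequality is preserved when a constant shift is applied uniformly. If one prefers not to invoke Courant--Fischer directly, an alternative is a dimension-counting argument. Let $\mathcal{U}$ be spanned by the top $i$ eigenvectors of $A$, and let $\mathcal{W}$ be spanned by the bottom $p-i+1$ eigenvectors of $B$. Since $\dim\mathcal{U}+\dim\mathcal{W}=p+1$, these subspaces intersect nontrivially, so we can pick a unit vector $x\in\mathcal{U}\cap\mathcal{W}$. This vector satisfies $\sigma_i(A)\le x^TAx$ and $x^TBx\le\sigma_i(B)$, and the pointwise bound above then finishes the proof.
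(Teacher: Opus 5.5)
Your proof is correct. Pushing the pointwise bound $x^TAx\le x^TBx+\|A-B\|$ through the Courant--Fischer max-min gives the claim, and so does your dimension-counting alternative that intersects the top-$i$ eigenspace of $A$ with the bottom-$(p-i+1)$ eigenspace of $B$; both use $\sigma_i$ ordered decreasingly, as in the paper.

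The paper itself does not prove this lemma. It quotes Weyl's inequality as a classical fact and uses it only in Part (ii) of Lemma~\ref{determin:general:error}, with the Frobenius-norm bound $\|\mathbb{E}\hat{A}_i-V_iV_i^T\|_F\le 1/4$. Your spectral-norm version covers that use because $\|\cdot\|\le\|\cdot\|_F$.
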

The Davis-Kahan theorem bounds the distance between eigenspaces under perturbation. The following user-friendly version can be found in \cite{yu2015useful}.
\begin{lemma}[Davis-Kahan]\label{lemma: DK}
For a symmetric matrix $A\in \mathbb{R}^{d\times d}$, let $V_K(A)\in \mathbb{R}^{d\times K}$ denote the matrix consisting of the top K eigenvectors of $A$. Then, given two symmetric matrices $A, B\in \mathbb{R}^{d\times d}$, we have
\begin{equation*}
\| V_K(A)V_K(A)^{T}-V_K(B)V_K(B)^{T} \|_F\leq \dfrac{2\sqrt{2}\min(K^{1/2}\|A-B\|, \|A-B\|_F)}{\sigma_K(B)-\sigma_{K+1}(B)},
\end{equation*}
where $\sigma_{K+1}(B)=-\infty$ if $K=d$.
\end{lemma}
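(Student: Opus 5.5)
The plan is to reduce the statement to the standard $\sin\Theta$ argument: write each projector difference in terms of the principal angles between the two subspaces, then control the off-diagonal block of the perturbation through the eigen-equation. Let $V=V_K(A)$ and $\hat V=V_K(B)$, and let $V_\perp$, $\hat V_\perp$ be orthonormal complements. The first step uses the identity $\|VV^T-\hat V\hat V^T\|_F^2=2\|\hat V_\perp^T V\|_F^2$, which converts the projector distance into $\sqrt2\,\|\sin\Theta\|_F$. Writing $A=B+E$ with $E=A-B$, I will then multiply the eigen-equation $AV=V\Lambda_A$ on the left by $\hat V_\perp^T$. Using $\hat V_\perp^T B=\hat\Lambda_\perp \hat V_\perp^T$, this gives the Sylvester-type relation
\[
\hat V_\perp^T V\Lambda_A-\hat\Lambda_\perp \hat V_\perp^T V=\hat V_\perp^T E V .
\]

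The second step handles the two cases of the gap. Let $\gamma=\sigma_K(B)-\sigma_{K+1}(B)$. If $\|E\|\ge \gamma/2$, then the trivial bounds suffice. The projector distance is always at most $\min(\sqrt{2K},\ldots)$, more precisely at most $\sqrt{2K}$ and at most $\sqrt2\|\hat V_\perp^TV\|_F\le \sqrt{2}\min(\sqrt K,\sqrt{d-K})$. Comparing these with $2\sqrt2\sqrt K\|E\|/\gamma\ge\sqrt{2K}$ handles the spectral-norm branch. For the Frobenius branch, note that $\|E\|_F\ge\|E\|\ge\gamma/2$ only yields $\ge\sqrt2$. So I will instead follow Yu--Wang--Samworth: apply the $\sin\Theta$ theorem with the gap measured in $B$ alone, avoiding any case split.

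Concretely, by Weyl (Lemma \ref{ine:weyl}) the eigenvalues of $\Lambda_A$ lie in $[\sigma_K(B)-\|E\|,\infty)$ while those of $\hat\Lambda_\perp$ are at most $\sigma_{K+1}(B)$. This gives the separation bound $\|\hat V_\perp^TV\|_F\le \|\hat V_\perp^TEV\|_F/(\gamma-\|E\|)$. When $\|E\|\le\gamma/2$ the denominator is at least $\gamma/2$, and $\|\hat V_\perp^TEV\|_F\le\min(\sqrt K\|E\|,\|E\|_F)$. Hence the distance is at most $2\sqrt2\min(\cdot)/\gamma$. When $\|E\|>\gamma/2$ the bound $2\sqrt2\sqrt K\|E\|/\gamma>\sqrt{2K}$ already exceeds the trivial bound $\sqrt{2K}$, and since $\|E\|_F\ge\|E\|$ the Frobenius version is bounded by the same reasoning. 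This works provided I use $\|VV^T-\hat V\hat V^T\|_F\le\sqrt{2K}$ together with $\min(\sqrt K\|E\|,\|E\|_F)\ge\|E\|$.

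The main obstacle is exactly that last comparison in the large-perturbation regime. When only $\|E\|_F$ is the minimum and $\|E\|_F<\sqrt K\|E\|$, the trivial bound $\sqrt{2K}$ need not be below $2\sqrt2\|E\|_F/\gamma$. For this case I will instead use $\|VV^T-\hat V\hat V^T\|_F\le\sqrt2\|\hat V_\perp^TV\|_F$ and the refined argument of \cite{yu2015useful}, bounding $\|\hat V_\perp^T V\|_F$ via the eigenvector equation for $B$ and the gap of $B$ only. That argument yields $2\|E\|_F/\gamma$ without requiring $\|E\|$ small. Since this lemma is stated as cited from \cite{yu2015useful}, the proof in the paper will simply invoke it; my plan is the above reconstruction.
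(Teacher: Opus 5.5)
\textbf{There is a genuine gap in your reconstruction.} Several parts are correct:
\begin{itemize}
\item the identity $\|VV^T-\hat V\hat V^T\|_F^2=2\|\hat V_\perp^TV\|_F^2$;
\item the Sylvester relation $X\Lambda_A-\hat\Lambda_\perp X=\hat V_\perp^TEV$ with $X=\hat V_\perp^TV$;
\item the case $\|E\|\le\gamma/2$;
\item the spectral-norm branch when $\|E\|>\gamma/2$.
\end{itemize}
The gap is the Frobenius branch when $\|E\|>\gamma/2$. Your sentence ``since $\|E\|_F\ge\|E\|$ the Frobenius version is bounded by the same reasoning'' is false. The inequality $\min(\sqrt K\|E\|,\|E\|_F)\ge\|E\|>\gamma/2$ only makes the right-hand side exceed $\sqrt2$, not the trivial bound $\sqrt{2K}$.

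Concretely, take $K=2$, $B=\mathrm{diag}(1,1,0,0)$, and a rank-one $E$ with $\|E\|=\|E\|_F=0.6$. The target bound is $1.2\sqrt2\approx1.70$. The trivial bound gives $2$, and your separation bound gives $\sqrt2\cdot0.6/0.4\approx2.12$. You notice this yourself and then defer to an unspecified ``refined argument''. So your proof is incomplete exactly where the $\|A-B\|_F$ branch and the constant $2\sqrt2$ are determined.

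For comparison, the paper gives no proof. It simply cites Theorem 2 of \cite{yu2015useful}, with $r=1$, $s=K$, the gap taken in $B$, and $\|VV^T-\hat V\hat V^T\|_F=\sqrt2\|\sin\Theta\|_F$. If you only intend to cite, that matches the paper, but then the case analysis is unnecessary.

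\textbf{The missing idea} is to avoid the perturbed eigenvalues $\Lambda_A$ in the separation step altogether. Let $\hat\Lambda=\mathrm{diag}(\sigma_1(B),\dots,\sigma_K(B))$ and rewrite your relation as
\[
X\hat\Lambda-\hat\Lambda_\perp X=\hat V_\perp^T E V-X(\Lambda_A-\hat\Lambda).
\]
\begin{itemize}
\item Left side: its $(j,k)$ entry is $X_{jk}\bigl(\sigma_k(B)-\sigma_{K+j}(B)\bigr)$. Each factor $\sigma_k(B)-\sigma_{K+j}(B)$ is at least $\gamma$, so the left side has Frobenius norm at least $\gamma\|X\|_F$, with no loss from $\|E\|$.
\item First term on the right: $\|\hat V_\perp^TEV\|_F\le\min(\sqrt K\|E\|,\|E\|_F)$.
\item Second term on the right: since $\|X\|\le1$, we have $\|X(\Lambda_A-\hat\Lambda)\|_F\le\|\Lambda_A-\hat\Lambda\|_F$. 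This is at most $\sqrt K\|E\|$ by Weyl's inequality. It is also at most $\|E\|_F$ by the Hoffman--Wielandt (Mirsky) inequality $\sum_{i=1}^d(\sigma_i(A)-\sigma_i(B))^2\le\|A-B\|_F^2$.
\end{itemize}
Combining these gives $\|X\|_F\le 2\min(\sqrt K\|E\|,\|E\|_F)/\gamma$ for every $E$, with no case split. Multiplying by $\sqrt2$ gives the stated bound. This is the Yu--Wang--Samworth argument, and the factor $2$ is the price paid for substituting $\hat\Lambda$ for $\Lambda_A$.
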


The following lemma characterizes the tail and expectation bounds of the difference between the sample covariance and the population covariance.
It can be found in \cite{koltchinskii2017concentration}.
\begin{lemma}\label{lemma:cov}
 Let $Z_i, i\in[n],$ be i.i.d.  subgaussian random vector   in $\mathbb R^d$ and have zero mean and covariance $\Sigma$.  
 Define 
 \begin{equation*}
     \hat{\Sigma}=\frac{1}{n}\sum_{i=1}^n Z_iZ_i^T.
 \end{equation*}
For all $t\geq 1$, with probability at least $1-\exp(-t)$, we have
\begin{equation}\label{ine: samplecov}
\|\hat{\Sigma}-\Sigma\|\leq C\|\Sigma\|\left( \dfrac{r(\Sigma)}{n}\vee \sqrt{\dfrac{r(\Sigma)}{n}}+\dfrac{t}{n}\vee \sqrt{\dfrac{t}{n}}  \right).
\end{equation}
And the moment satisfies 
\begin{equation*}
\Eb\|\hat{\Sigma}-\Sigma\|\leq C\|\Sigma\|   \left( \dfrac{r(\Sigma)}{n}\vee\sqrt{\dfrac{r(\Sigma)}{n}}   \right).
\end{equation*}
Here, $r(\Sigma)=\frac{{\rm Tr}(\Sigma)}{\|\Sigma\|}$ denotes the effective rank, and $C>0$ is a constant.
\end{lemma}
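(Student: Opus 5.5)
Since this is the concentration inequality of \cite{koltchinskii2017concentration}, the plan is to recover it from a generic-chaining bound for quadratic empirical processes rather than from an $\varepsilon$-net argument. A net argument would produce the ambient dimension $d$ in place of the effective rank $r(\Sigma)$.

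\textbf{Step 1 (reduction to a supremum over a class of linear functionals).} Since $\hat{\Sigma}-\Sigma$ is symmetric, we have
\[
\|\hat{\Sigma}-\Sigma\|=\sup_{\|u\|_2\le 1}\Big|\frac1n\sum_{i=1}^n\big(\langle Z_i,u\rangle^2-\Eb\langle Z_i,u\rangle^2\big)\Big|.
\]
Consider the class of linear functionals $\mathcal F=\{f_u(z)=\langle z,u\rangle:\|u\|_2\le 1\}$. By Definition \ref{Def:subgaussian}, the $\psi_2$-metric on $\mathcal F$ is controlled by the $L_2$-metric:
\[
\|f_u-f_v\|_{\psi_2}\le M\|\Sigma^{1/2}(u-v)\|_2 .
\]
The problem therefore becomes controlling a centered empirical process of squares indexed by the ellipsoid $T=\Sigma^{1/2}\mathbb{B}(0,1)$, equipped with the Euclidean metric.

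\textbf{Step 2 (chaining bound for squares).} I would invoke Mendelson's bound for empirical processes of squares, in the tail form due to Dirksen. It states that with probability at least $1-e^{-t}$,
\[
\sup_{f\in\mathcal F}\Big|\frac1n\sum_i (f^2(Z_i)-\Eb f^2)\Big|\lesssim \frac{\gamma_2(\mathcal F,\psi_2)^2}{n}+\frac{\Delta\,\gamma_2(\mathcal F,\psi_2)}{\sqrt n}+\Delta^2\Big(\sqrt{\frac tn}+\frac tn\Big),
\]
where $\Delta=\sup_f\|f\|_{\psi_2}$ and $\gamma_2$ is Talagrand's functional. The same argument, integrated over $t$, gives the expectation bound without the $t$-terms. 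The proof of this inequality is the main obstacle. It requires chaining that exploits Bernstein's inequality for the products $f^2$, truncating the chain at a level $s_0$ with $2^{s_0}\asymp n$ so that subexponential and subgaussian increments are both controlled. I would quote the result rather than re-derive it.

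\textbf{Step 3 (computing the parameters).} From Step 1, $\Delta\lesssim M\sup_{\|u\|\le1}\|\Sigma^{1/2}u\|_2=M\|\Sigma\|^{1/2}$. By the comparison of metrics in Step 1 and Talagrand's majorizing measure theorem,
\[
\gamma_2(\mathcal F,\psi_2)\lesssim \gamma_2(T,\|\cdot\|_2)\asymp \Eb\sup_{u}\langle g,\Sigma^{1/2}u\rangle=\Eb\|\Sigma^{1/2}g\|_2\le \sqrt{\tr(\Sigma)},
\]
where $g$ is a standard Gaussian vector. Substituting into Step 2 gives
\[
\frac{\tr\Sigma}{n}+\sqrt{\frac{\|\Sigma\|\tr\Sigma}{n}}+\|\Sigma\|\Big(\sqrt{\frac tn}+\frac tn\Big)=\|\Sigma\|\Big(\frac{r(\Sigma)}n+\sqrt{\frac{r(\Sigma)}n}+\sqrt{\frac tn}+\frac tn\Big),
\]
which is \eqref{ine: samplecov} up to constants depending only on $M$, since each sum $a+b$ is at most $2(a\vee b)$. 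The moment bound follows from the expectation version of Step 2.
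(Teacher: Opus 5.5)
Your argument is correct. The paper gives no proof of this lemma; it simply imports the result from \cite{koltchinskii2017concentration}. Your reconstruction is essentially the route taken in that reference: Mendelson's bound for empirical processes of squares in Dirksen's tail form, with $\sup_f\|f\|_{\psi_2}\le M\|\Sigma\|^{1/2}$ and $\gamma_2(\mathcal F,\psi_2)\lesssim M\,\Eb\|\Sigma^{1/2}g\|_2\le M\sqrt{\tr(\Sigma)}$ via the majorizing measure theorem. It makes the lemma self-contained modulo that deep chaining result.

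One small point on the moment bound. Integrating the tail bound over $t\ge 1$ does not remove the $t$-terms entirely: it leaves a residual of order $\|\Sigma\|(n^{-1/2}+n^{-1})$. You should say explicitly that this residual is absorbed into $\|\Sigma\|\sqrt{r(\Sigma)/n}$ because $r(\Sigma)\ge 1$. Alternatively, quote Mendelson's expectation bound directly.
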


For a random variable $Y\in \mathbb{R}$, define its sub-exponential norm \cite{vershynin2018high} as 
\[
\|Y\|_{\psi_1}={\rm inf}\Big\{t>0: \mathbb{E}\exp(|Y|/t)\leq 2\Big\}.
\]
Recall that $\hat{V}_i\in \mathbb{R}^{d\times K}$ consists of the top $K$ eigenvectors of $\hat{\Sigma}_i=\frac{1}{n_i}X_i^TX_i$ and $V_i\in \mathbb{R}^{d\times K}$ has the top $K$ eigenvectors of $\Sigma_i$. Denote $\Sigma^*_i=\Eb[\hat{V}_i\hat{V}_i^T]$ and let  $V_i^*\in \mathbb{R}^{d\times K}$ represent the top $K$ eigenvectors of $\Sigma^*_i$.

\begin{lemma}[Lemma 1 in \cite{fan2019distributed}]\label{lemma: subexponentail norm}
Suppose Assumption \ref{assumption:subgaussian} holds. For each $i\in \Sc$, if $r_i\leq n_i$, we have
\begin{equation*}
\big\| \| \hat{V}_i\hat{V}_i^T-\Sigma_i^* \|_F\big \|_{\psi_1} \leq C\kappa_i \sqrt{\dfrac{Kr_i}{n_i}}.
\end{equation*}
\end{lemma}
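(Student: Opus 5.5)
This is Lemma 1 of \cite{fan2019distributed}, and I would reproduce its argument. Write $\hat{A}_i=\hat V_i\hat V_i^T$, $\Delta=\hat\Sigma_i-\Sigma_i$ and $g_i=\sigma_K(\Sigma_i)-\sigma_{K+1}(\Sigma_i)$. Since $\Sigma_i^*=\Eb\hat A_i$, the triangle inequality and Jensen give
\[
\|\hat A_i-\Sigma_i^*\|_F\le \|\hat A_i-V_iV_i^T\|_F+\Eb\|\hat A_i-V_iV_i^T\|_F .
\]
The $\psi_1$ norm of a constant $c$ is at most $c/\log 2$. It therefore suffices to show $\big\|\|\hat A_i-V_iV_i^T\|_F\big\|_{\psi_1}\lesssim \kappa_i\sqrt{Kr_i/n_i}$. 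That bound also controls the expectation, since $\Eb|Y|\lesssim \|Y\|_{\psi_1}$.

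\textbf{Tail bound.} Lemma \ref{lemma: DK} with $A=\hat\Sigma_i$ and $B=\Sigma_i$ gives
\[
\|\hat A_i-V_iV_i^T\|_F\le 2\sqrt{2}\,\sqrt K\,\|\Delta\|/g_i .
\]
The left-hand side is also trivially bounded by $\sqrt{2K}$. Lemma \ref{lemma:cov} applies because $r_i\le n_i$, so $r_i/n_i\le\sqrt{r_i/n_i}$. It shows that for $t\ge1$, with probability at least $1-e^{-t}$,
\[
\|\Delta\|\le C\|\Sigma_i\|\Big(\sqrt{r_i/n_i}+\sqrt{t/n_i}+t/n_i\Big).
\]
Dividing by $g_i$ gives, with the same probability,
\[
Y:=\|\hat A_i-V_iV_i^T\|_F\le \min\Big\{\sqrt{2K},\ C\sqrt K\kappa_i\Big(\sqrt{r_i/n_i}+\sqrt{t/n_i}+t/n_i\Big)\Big\}.
\]

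\textbf{Converting the tail to a $\psi_1$ bound.} Set $a=\kappa_i\sqrt{Kr_i/n_i}$. The $\sqrt{t/n_i}$ term is subgaussian-type and the $t/n_i$ term is subexponential-type, with respective scales $\sqrt K\kappa_i/\sqrt{n_i}$ and $\sqrt K\kappa_i/n_i$. Both scales are at most $a$, because $r_i\ge1$ and $\kappa_i\ge1$. Integrating the tail then gives $\Pb(Y>C'a(1+s))\le e^{-s}$ for $s\ge1$, which yields $\|Y\|_{\psi_1}\lesssim a$.

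This integration is the step I expect to need the most care. In the regime $t>n_i$ the $t/n_i$ term grows linearly rather than like $\sqrt t$, and there I will use the truncation $Y\le\sqrt{2K}$ instead. The truncation is harmless: once $a\cdot(t/r_i)$ exceeds $\sqrt{2K}$ the event is vacuous, and since $\kappa_i\ge1$ the bound $\sqrt{2K}\le\sqrt2\,a\sqrt{n_i/r_i}$ is consistent with a $\psi_1$ scale of order $a$. Checking $\Eb\exp(Y/(Ca))\le2$ by splitting the integral at $t=n_i$ completes the proof.
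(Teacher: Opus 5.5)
Your argument is correct. It is the standard argument behind Lemma 1 of \cite{fan2019distributed}, which the paper imports without proof: the Koltchinskii--Lounici tail bound (Lemma \ref{lemma:cov}) combined with Lemma \ref{lemma: DK}, followed by centering at $\Sigma_i^*=\Eb\hat A_i$ via the triangle inequality and Jensen.

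The truncation at $\sqrt{2K}$ is unnecessary, and your ``$a\cdot(t/r_i)$'' should read $a\,t/\sqrt{r_in_i}$. For $t\ge1$ you have $1+\sqrt{t/r_i}+t/\sqrt{r_in_i}\le 3t$, so $\Pb(Y>3Ca\,t)\le e^{-t}$ already holds for all $t\ge1$. That is a $\psi_1$ tail at scale $a$, because growth linear in $t$ is exactly subexponential behaviour.
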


\begin{lemma}[Theorem 2 in \cite{fan2019distributed}] \label{lemma: thm2 fan}
For each $i\in \Sc$, if the rows of $X_i$ have symmetric innovation, then $\Sigma_i^*$ and $\Sigma_i$ share the same set of eigenvectors. Furthermore, if $\left\|{\Sigma}_i^*-V_i V_i^T\right\|<1 / 2$, then  $\|V_i^*(V_i^*)^T-V_iV_i^T\|_F=0$.
\end{lemma}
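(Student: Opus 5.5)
The plan is to make the sign-flip symmetry from Definition \ref{Def: symmetric innovation} act on the sample covariance $\hat{\Sigma}_i$. Conjugation-equivariance of the spectral projector then turns that symmetry into a diagonal structure for $\Sigma_i^*=\Eb[\hat{V}_i\hat{V}_i^T]$. The second claim then reduces to comparing diagonal entries.

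\textbf{Setup.} Write $\Sigma_i=U\Lambda U^T$ with $U$ orthogonal, and choose $U$ so that its first $K$ columns are $V_i$. Each row of $X_i$ is $x=U\Lambda^{1/2}y$, where $y=\Lambda^{-1/2}U^Tx$ is the standardized vector. With $Y_i$ the matrix whose rows are these $y$'s,
\begin{equation*}
\hat{\Sigma}_i=U\Lambda^{1/2}S\Lambda^{1/2}U^T,\qquad S=\tfrac{1}{n_i}Y_i^TY_i .
\end{equation*}
Put $D_j=I_d-2e_je_j^T$. The rows are i.i.d. and each has symmetric innovation, so flipping the $j$-th coordinate of every row leaves the joint law unchanged. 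Hence $S\overset{d}{=}D_jSD_j$. Since $D_j$ is diagonal, it commutes with $\Lambda^{1/2}$, and therefore $\tilde S:=U^T\hat{\Sigma}_iU\overset{d}{=}D_j\tilde SD_j$.

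\textbf{First claim.} Let $P(\cdot)$ be the projector onto the top-$K$ eigenspace. It is conjugation-equivariant, $P(Q A Q^T)=QP(A)Q^T$ for orthogonal $Q$, whenever the $K$-th eigengap is positive. Ties have probability zero or can be resolved by a fixed rule; we must make sure the rule is equivariant under the sign flips $D_j$, or restrict to the event where the gap is positive. This gives $U^T\hat{V}_i\hat{V}_i^TU=P(\tilde S)$ and
\begin{equation*}
G:=U^T\Sigma_i^*U=\Eb P(\tilde S)=\Eb P(D_j\tilde SD_j)=D_j\,G\,D_j\qquad\text{for every }j.
\end{equation*}
Conjugating by $D_j$ negates the off-diagonal entries in row and column $j$. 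Hence $G_{jk}=-G_{jk}=0$ for all $j\neq k$. So $G=\mathrm{diag}(w_1,\dots,w_d)$, and $\Sigma_i^*=U\,\mathrm{diag}(w)\,U^T$ shares the eigenvectors $U$ of $\Sigma_i$.

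\textbf{Second claim.} We have $V_iV_i^T=U\,\mathrm{diag}(1_K,0_{d-K})U^T$. So $\Sigma_i^*-V_iV_i^T=U\,\mathrm{diag}(w_1-1,\dots,w_K-1,w_{K+1},\dots,w_d)U^T$, and its spectral norm is the largest absolute diagonal entry. The assumption $\|\Sigma_i^*-V_iV_i^T\|<1/2$ therefore gives $w_k>1/2$ for $k\le K$ and $w_k<1/2$ for $k>K$. The top $K$ eigenvalues of $\Sigma_i^*$ are thus exactly $w_1,\dots,w_K$, separated by a strict gap from the rest. Their eigenvectors are the first $K$ columns of $U$, so $V_i^*(V_i^*)^T=V_iV_i^T$, which is the claim.

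\textbf{Main obstacle.} The delicate point is the well-definedness and equivariance of $\hat V_i\hat V_i^T$. This matters both when $\hat{\Sigma}_i$ has a tied $K$-th eigenvalue and when $\Sigma_i$ has repeated eigenvalues, where $U$ is not unique. I would handle the latter by fixing $U$ as in the setup and applying the symmetric-innovation hypothesis with respect to that $U$. The former I would handle by the measurable, sign-flip-equivariant tie-breaking (or positive-gap restriction) noted in the first claim.
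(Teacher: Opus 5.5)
Your argument is correct and is essentially the standard proof of Theorem 2 in \cite{fan2019distributed}; the paper cites that result rather than reproving it. In that proof, sign-flip invariance makes $U^T\Sigma_i^*U$ invariant under conjugation by every $D_j$, hence diagonal, and the spectral-norm condition then forces $w_k>1/2$ for $k\le K$ and $w_k<1/2$ for $k>K$. The one thing to tidy is the choice of $U$: take it to be the eigenbasis in which symmetric innovation is assumed (reordering its columns preserves the sign-flip symmetry). The span of its first $K$ columns then equals the column space of $V_i$ because $\sigma_K(\Sigma_i)>\sigma_{K+1}(\Sigma_i)$, so you should not choose a basis containing $V_i$ and then impose the hypothesis on it.
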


\begin{lemma}[Theorem 3 in \cite{fan2019distributed}] \label{lemma: thm3 fan}
Under the same conditions of Lemma \ref{lemma: subexponentail norm}, there are constants $C_1$ and $C_2$ such that 
 \begin{equation*}
     \|V_i^*(V_i^*)^T-V_iV_i^T\|_F \leq C_1\left\|{\Sigma}^*_i-{V}_i {V}_i^T\right\|_F \leq C_2 \kappa^2_i \sqrt{K} r_i / n_i, ~\forall i\in \Sc.
 \end{equation*}
\end{lemma}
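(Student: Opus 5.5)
The statement just before this point is Lemma \ref{lemma: thm3 fan}, quoted as Theorem 3 of \cite{fan2019distributed}. The plan is to bound the bias $\|\Sigma_i^*-V_iV_i^T\|_F$ by a second-order perturbation expansion of the spectral projector, and then pass from $\Sigma_i^*$ to its top-$K$ eigenspace by Davis--Kahan.

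\textbf{Second inequality.} Write $E=\hat{\Sigma}_i-\Sigma_i$ and $P=V_iV_i^T$. Use the resolvent representation of the spectral projector. On the event $\mathcal{E}=\{\|E\|\le \Delta_i/4\}$, with $\Delta_i=\sigma_K(\Sigma_i)-\sigma_{K+1}(\Sigma_i)$, expand
\[
\hat{V}_i\hat{V}_i^T-P=L(E)+R(E),
\]
where $L$ is linear in $E$ and $\|R(E)\|\lesssim \|E\|^2/\Delta_i^2$. Since $\Eb E=0$, we get $\Eb L(E)=0$, so
\[
\Sigma_i^*-P=\Eb[R(E)\mathbb{I}_{\mathcal{E}}]+\Eb[(\hat{V}_i\hat{V}_i^T-P-L(E))\mathbb{I}_{\mathcal{E}^c}].
\]
Both $\Sigma_i^*-P$ and $R(E)$ have rank at most $2K$ (up to the structure of the expansion), which converts the spectral norm into the Frobenius norm at a cost of $\sqrt{K}$. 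Lemma \ref{lemma:cov} gives $\Eb\|E\|^2\lesssim \|\Sigma_i\|^2 r_i/n_i$ when $r_i\le n_i$. The first term is then $\lesssim \sqrt{K}\kappa_i^2 r_i/n_i$. For the second term, the tail bound in Lemma \ref{lemma:cov} shows $\Pb(\mathcal{E}^c)$ is exponentially small once $\kappa_i^2 r_i/n_i$ is small. It is at most of the same order otherwise, because the projector difference is bounded by $2\sqrt{K}$ and the inequality is trivial when $\kappa_i^2r_i/n_i\gtrsim 1$.

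\textbf{Main obstacle.} The hard step is controlling the rank, that is, turning the spectral bound on the remainder into a Frobenius bound with only a $\sqrt{K}$ factor rather than $\sqrt{d}$. I would handle it by writing $R(E)=PR(E)P+PR(E)P^\perp+P^\perp R(E)P+P^\perp R(E)P^\perp$. Every block except the last has rank at most $K$. For the last block, use the identity $P^\perp(\hat{V}_i\hat{V}_i^T)P^\perp=(P^\perp\hat{V}_i)(P^\perp\hat{V}_i)^T$, which has rank at most $K$ and Frobenius norm at most $\sqrt{K}\|P^\perp\hat V_i\|^2\lesssim \sqrt{K}\|E\|^2/\Delta_i^2$ by Davis--Kahan.

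\textbf{First inequality.} Apply Lemma \ref{lemma: DK} with $A=\Sigma_i^*$ and $B=P$. The eigengap of $P$ is $1$, so
\[
\|V_i^*(V_i^*)^T-P\|_F\le 2\sqrt{2}\,\|\Sigma_i^*-P\|_F,
\]
which is the first inequality with $C_1=2\sqrt2$.
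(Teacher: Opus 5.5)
The paper gives no argument of its own here; it imports the bound directly from Theorem 3 of \cite{fan2019distributed}. Your outline (mean-zero first-order term, a quadratic remainder with a $\sqrt{K}$ factor, expectation, then Davis--Kahan against $P$) is the standard argument for that theorem. Your first inequality with $C_1=2\sqrt2$ is correct, since the eigengap of $P$ is $1$.

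The second inequality has a concrete hole on $\mathcal{E}^c$. The integrand there is $\hat V_i\hat V_i^T-P-L(E)$, not just the projector difference. Since $L(E)$ is linear in $E$ and unbounded, the argument ``bounded by $2\sqrt K$ times an exponentially small probability'' does not control it. The clean repair, which also removes your case split and tail argument, uses $\|E\|>\Delta_i/4$ on $\mathcal{E}^c$. This gives $\|\hat V_i\hat V_i^T-P\|_F\le\sqrt{2K}\le 16\sqrt{2K}\|E\|^2/\Delta_i^2$ and $\|L(E)\|_F\le\sqrt{2K}\|E\|/\Delta_i\le 4\sqrt{2K}\|E\|^2/\Delta_i^2$. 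Combined with your bound on $\mathcal{E}$, you get the pointwise inequality $\|\hat V_i\hat V_i^T-P-L(E)\|_F\lesssim\sqrt K\|E\|^2/\Delta_i^2$ for every realization. Jensen then gives $\|\Sigma_i^*-P\|_F\le\Eb\|\hat V_i\hat V_i^T-P-L(E)\|_F\lesssim\sqrt K\,\Eb\|E\|^2/\Delta_i^2\lesssim\sqrt K\kappa_i^2r_i/n_i$. The last step uses $r_i\le n_i$ and the second moment obtained by integrating the tail bound in Lemma \ref{lemma:cov}.

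There are two further corrections.

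First, $\Sigma_i^*-P$ is not of low rank. Its $P^\perp$ block is $\Eb[(P^\perp\hat V_i)(P^\perp\hat V_i)^T]$, which is generically of rank $d-K$. So the spectral-to-Frobenius conversion must be done per realization, before taking expectations, as above. Per realization, the rank issue you call the main obstacle is automatic. Both $\hat V_i\hat V_i^T-P$ and $L(E)=PXP^\perp+P^\perp X^TP$ (for some matrix $X$) have rank at most $2K$, so $R(E)$ has rank at most $4K$ without any block decomposition.

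Second, the step that genuinely needs care is one you assert without argument: $\|R(E)\|\lesssim\|E\|^2/\Delta_i^2$ with no dependence on $\|\Sigma_i\|$. A contour enclosing $\sigma_1(\Sigma_i),\dots,\sigma_K(\Sigma_i)$ has length of order $\sigma_1(\Sigma_i)-\sigma_K(\Sigma_i)$. The crude length-times-sup estimate therefore gives only $\|\Sigma_i\|\|E\|^2/\Delta_i^3$, i.e.\ $\kappa_i^3$ instead of $\kappa_i^2$. You can fix this in either of two ways.
\begin{itemize}
\item Integrate the resolvent bound $\mathrm{dist}(z,\mathrm{spec}(\Sigma_i))^{-3}$ along the contour; this integral is $O(\Delta_i^{-2})$.
\item Use a block argument. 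Write $\Sigma_i=V_i\Lambda V_i^T+V_\perp\Lambda_\perp V_\perp^T$ and $\hat P=\hat V_i\hat V_i^T$. The diagonal blocks $PR(E)P=-P(I-\hat P)P$ and $P^\perp R(E)P^\perp=P^\perp\hat PP^\perp$ have norm $\|\sin\Theta\|^2$. From $\hat\Sigma_i\hat P=\hat P\hat\Sigma_i$, the off-diagonal block $Z=V_\perp^TR(E)V_i$ solves $Z\Lambda-\Lambda_\perp Z=V_\perp^TE(\hat P-P)V_i-V_\perp^T(\hat P-P)EV_i$. The right side is $O(\|E\|^2/\Delta_i)$, and this Sylvester operator has inverse of norm at most $1/\Delta_i$.
\end{itemize}
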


\begin{lemma} [Lemma 4 in \cite{fan2019distributed}] \label{lemma:sum of exponential norms}
If $\left\{Z_i\right\}_{i\in [n]}$ are independent random vectors in a separable Hilbert space (where the norm is denoted by $\|\cdot\|_H)$ with $\mathbb{E} [Z_i]=\mathbf{0}$ and  $\| \|Z_i\|_H\|_{\psi_1} \leq L_i<\infty$. We have
\begin{equation*}
    \Big\|\| \sum_{i=1}^n Z_i\|_H\Big\|_{\psi_1} \lesssim \sqrt{\sum_{i=1}^n L_i^2} .
\end{equation*}
\end{lemma}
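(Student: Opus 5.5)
The plan is to prove the bound through the moment characterization of the $\psi_1$ norm. It suffices to show that, for a constant $C$, the tail satisfies $\Pb(\|S\|_H\geq x)\leq 2\exp\big(-cx/\sqrt{\sum_i L_i^2}\big)$ for all $x\geq 0$, where $S=\sum_{i=1}^n Z_i$. Equivalently, it suffices to show $(\Eb\|S\|_H^p)^{1/p}\lesssim p\sqrt{\sum_i L_i^2}$ for all $p\geq 1$. Either statement implies $\|\|S\|_H\|_{\psi_1}\lesssim\sqrt{\sum_i L_i^2}$ by the standard equivalences for sub-exponential variables \cite{vershynin2018high}.

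The main tool will be the Bernstein-type inequality for sums of independent mean-zero random vectors in a separable Hilbert space (Pinelis--Sakhanenko, or Pinelis's martingale version, which uses only the $(2,1)$-smoothness of $\|\cdot\|_H^2$ and is therefore dimension-free). It requires moment conditions of the form $\Eb\|Z_i\|_H^k\leq \frac{k!}{2}b_i^2 L^{k-2}$ for all $k\geq 2$. It then gives
\[
\Pb(\|S\|_H\geq x)\leq 2\exp\Big(-\frac{x^2}{2(\sum_i b_i^2+xL)}\Big).
\]

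Step one is to verify these moment conditions from the hypothesis $\|\|Z_i\|_H\|_{\psi_1}\leq L_i$. Since $\Eb\exp(\|Z_i\|_H/L_i)\leq 2$, expanding the exponential gives $\Eb\|Z_i\|_H^k\leq 2\,k!\,L_i^k$. This has the required form with $b_i^2= 4L_i^2$ (up to an absolute constant) and $L=\max_i L_i$, using $L_i^{k-2}\leq L^{k-2}$.

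Step two is to plug these into the Bernstein inequality. Writing $\Lambda=\sqrt{\sum_i L_i^2}$ and noting $\max_i L_i\leq \Lambda$, we obtain
\[
\Pb(\|S\|_H\geq x)\leq 2\exp\big(-c\min(x^2/\Lambda^2,\,x/\Lambda)\big).
\]
Step three converts this mixed sub-Gaussian/sub-exponential tail into a pure sub-exponential one. For $x\geq\Lambda$ the minimum equals $x/\Lambda$. For $x<\Lambda$ the trivial bound $\Pb(\cdot)\leq 1\leq e\cdot e^{-x/\Lambda}$ suffices. Integrating the tail then bounds $\Eb\exp(\|S\|_H/(C\Lambda))$ by $2$ for a large enough absolute constant $C$.

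The main obstacle is the Hilbert-space Bernstein inequality itself, that is, obtaining a dimension-free concentration for $\|S\|_H$ rather than for scalar projections. If I had to prove it rather than cite it, I would bound the exponential moment of a smoothed norm $\cosh$-type functional along the martingale $S_k=\sum_{i\leq k}Z_i$. The key estimate is the one-step inequality
\[
\Eb\big[\cosh(\lambda\|S_{k-1}+Z_k\|_H)\mid S_{k-1}\big]\leq \cosh(\lambda\|S_{k-1}\|_H)\,\Eb\big[e^{\lambda\|Z_k\|_H}-1-\lambda\|Z_k\|_H\big]+\cosh(\lambda\|S_{k-1}\|_H),
\]
which follows from a second-order Taylor expansion using $\Eb Z_k=0$ and the Hilbert identity for $\|a+b\|_H^2$.

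A naive alternative fails. Symmetrizing and applying Khintchine--Kahane conditionally yields $(\Eb\|S\|_H^p)^{1/p}\lesssim p^{3/2}\Lambda$, which loses a $\sqrt p$ factor. That is why I would rely on the Bernstein route and not on the moment route.
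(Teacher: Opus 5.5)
The paper gives no proof of this lemma. It is quoted from \cite{fan2019distributed} and used only as a black box in step (b) of the proof of Theorem \ref{theorem:upperbound}, so there is no internal argument to compare against; the question is whether your proof stands on its own. It does. The $\psi_1$ hypothesis gives the Bernstein moment condition. In fact it gives the slightly sharper $\Eb\|Z_i\|_H^k\le k!\,L_i^k$, because every term of $\sum_{k\ge 1}\Eb\|Z_i\|_H^k/(k!\,L_i^k)\le 1$ is at most $1$. The Pinelis--Sakhanenko inequality, with variance proxy of order $\Lambda^2$ and scale $\max_i L_i\le\Lambda$, then gives the mixed tail. Splitting at $x=\Lambda$ and integrating the tail correctly yields $\Eb\exp(\|S\|_H/(C\Lambda))\le 2$ for a large absolute constant $C$.

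Your sketch of the one-step $\cosh$ bound is also right, but ``second-order Taylor expansion'' hides the two estimates that do the work.
\begin{itemize}
\item Along $t\mapsto x+tz$, the second derivative of $\cosh(\lambda\|x+tz\|_H)$ is at most $\lambda^2\|z\|_H^2\cosh(\lambda\|x+tz\|_H)$. This uses the Hilbert identity to compute the curvature term of the norm, and $\sinh y\le y\cosh y$ to absorb it.
\item $\cosh(\lambda\|x+tz\|_H)\le\cosh(\lambda\|x\|_H)\,e^{\lambda t\|z\|_H}$.
\end{itemize}
Since $\int_0^1(1-t)\lambda^2\|z\|_H^2e^{\lambda t\|z\|_H}\,dt=e^{\lambda\|z\|_H}-1-\lambda\|z\|_H$, and the first-order term vanishes after conditioning because $\Eb Z_k=0$, you get exactly your inequality.

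This is the only place the Hilbert structure enters, and it is essential. In a general separable Banach space such as $\ell_\infty^N$, take i.i.d.\ uniform sign vectors: then $\|\sum_i Z_i\|_\infty\approx\sqrt{2n\log N}$, so the bound fails with a dimension-free constant. Your remark that the symmetrization--Khintchine route only gives $p^{3/2}\Lambda$ is also accurate.
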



\subsubsection{Deterministic analysis}
\begin{lemma}\label{lemma2}
Define $f(A)=\sum_{i\in \Sc}\rho_{\lambda_i}(\|A-\hat{A}_i\|_F)$, and 
\begin{equation*}
G_{\Sc}=\cap_{i\in \Sc} \mathbb{B}_F(\hat{A}_i, \lambda_i),
\end{equation*}
where $\mathbb B_F(\hat{A}_i,\lambda_i)=\{M\in\Rb{d\times d}:\|M-\hat{A}_i\|_F< \lambda_i\}$.
If 
\begin{equation}\label{cond:mean in G}
\frac{1}{|\Sc|}\sum_{i\in \Sc}\hat{A}_i\in G_{\Sc},
\end{equation}
then $\frac{1}{|\Sc|}\sum_{i\in \Sc} \hat{A}_i$ is the unique global minimizer of $f(A)$. 
\end{lemma}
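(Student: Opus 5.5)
Let $\bar A=\frac{1}{|\Sc|}\sum_{i\in\Sc}\hat A_i$. The plan is to check that $\bar A$ satisfies the first-order optimality condition for the convex function $f$, and then to get uniqueness from strict convexity of $f$ on a neighborhood of $\bar A$.

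\emph{Convexity.} Each map $A\mapsto \rho_{\lambda_i}(\|A-\hat A_i\|_F)$ is convex, since $\rho_{\lambda_i}$ is convex and nondecreasing on $[0,\infty)$ and $A\mapsto\|A-\hat A_i\|_F$ is convex. It is also differentiable everywhere. On the open ball $\mathbb{B}_F(\hat A_i,\lambda_i)$ it equals $\frac12\|A-\hat A_i\|_F^2$, with gradient $A-\hat A_i$. Outside the ball it equals $\lambda_i(\|A-\hat A_i\|_F-\lambda_i/2)$, with gradient $\lambda_i(A-\hat A_i)/\|A-\hat A_i\|_F$. The two gradients agree on the boundary sphere. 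Hence $f$ is convex and differentiable.

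\emph{Stationarity.} By \eqref{cond:mean in G}, $\bar A$ lies in every open ball $\mathbb{B}_F(\hat A_i,\lambda_i)$ with $i\in\Sc$. So every summand is in its quadratic regime at $\bar A$, and
\[
\nabla f(\bar A)=\sum_{i\in\Sc}(\bar A-\hat A_i)=|\Sc|\bar A-\sum_{i\in\Sc}\hat A_i=0 .
\]
Since $f$ is convex, $\bar A$ is a global minimizer.

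\emph{Uniqueness.} $G_{\Sc}$ is an intersection of open balls, so it is open, and it contains $\bar A$. On $G_{\Sc}$ we have
\[
f(A)=\tfrac12\sum_{i\in\Sc}\|A-\hat A_i\|_F^2=\tfrac{|\Sc|}{2}\|A-\bar A\|_F^2+\text{const},
\]
so $f(A)>f(\bar A)$ for every $A\in G_{\Sc}$ with $A\neq\bar A$. Now suppose $A'\neq\bar A$ were another global minimizer. By convexity the whole segment between $\bar A$ and $A'$ consists of minimizers. This segment contains points of $G_{\Sc}$ arbitrarily close to, but different from, $\bar A$, which contradicts the strict inequality just shown. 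Therefore $\bar A$ is the unique minimizer.

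\emph{Main obstacle.} The only delicate point is that $f$ is not globally strictly convex: the linear pieces of the Huber loss could in principle allow a flat set of minimizers. The local argument avoids this because it needs only two facts: the set of minimizers of a convex function is convex, and $f$ is strictly convex on the open set $G_{\Sc}$ containing $\bar A$. This is why condition \eqref{cond:mean in G} uses open balls; openness of $G_{\Sc}$ gives the neighborhood of $\bar A$ used above.
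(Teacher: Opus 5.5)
Your proof is correct and follows the same route as the paper: on the open set $G_{\Sc}$, $f$ coincides with $\frac{1}{2}\sum_{i\in\Sc}\|A-\hat{A}_i\|_F^2$, so $\bar A$ is a strict local minimizer, and convexity of $f$ makes it the unique global minimizer. You simply write out what the paper leaves implicit, namely the gradient computation and the segment argument showing that a strict local minimizer of a convex function is its unique global minimizer.
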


\begin{proof}[Proof of Lemma \ref{lemma2}]
By the definition of the Huber loss $\rho_{\lambda}(\cdot)$, it is clear that 
\[
f(A)=\frac{1}{2}\sum_{i\in \Sc}\|A-\hat{A}_i\|_F^2, ~~{\rm ~for~}A\in G_{\Sc}.
\]
If (\ref{cond:mean in G}) is satisfied, then $\frac{1}{|\Sc|}\sum_{i\in \Sc} \hat{A}_i$ is a strict local minimizer of $f(A)$. Since $f(A)$ is convex, a strict local minimizer is also the unique global minimizer. 
\end{proof}
Now we introduce a useful lemma in \cite{duan2023adaptive}.
\begin{lemma}\label{lemma: WDF2} 
[Lemma F.2 in \cite{duan2023adaptive}] Let $f: \mathbb{R}^p \rightarrow \mathbb{R}$ be a convex function and $x_0=\operatorname{argmin}_x f({x})$. Suppose there exist $\nu>0$ and $\gamma>0$ such that $\nabla^2 f({x}) \succeq \nu\boldsymbol{I}, \forall {x} \in \bb({x}_0, \gamma)$. If $g: \mathbb{R}^p \rightarrow \mathbb{R}$ is convex and $\lambda$-Lipschitz for some $\lambda<\nu\gamma$, then $f({x})+g({x})$ has a unique minimizer and it belongs to $\mathbb{B}({x}_0, \lambda / \nu)$.
\end{lemma}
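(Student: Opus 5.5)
The plan is a one-dimensional argument along rays from $x_0$: the curvature of $f$ near $x_0$ eventually beats the at most linear decrease that the $\lambda$-Lipschitz function $g$ can produce. Set $h=f+g$ and $r_0=\lambda/\nu$, so that $r_0<\gamma$ by assumption. The Hessian hypothesis implicitly makes $f$ twice differentiable on $\bb(x_0,\gamma)$. Since $x_0$ is an interior minimizer of $f$, we have $\nabla f(x_0)=0$.

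\emph{Step 1: behaviour along rays.} Fix a unit vector $u$ and set $\varphi(s)=h(x_0+su)$ for $s\ge 0$. Write $\varphi=\varphi_f+\varphi_g$ with $\varphi_f(s)=f(x_0+su)$ and $\varphi_g(s)=g(x_0+su)$.
\begin{itemize}
\item For $s\in[0,\gamma]$ we have $\varphi_f'(0)=0$ and $\varphi_f''(s)=u^T\nabla^2 f(x_0+su)u\ge\nu$. Hence $\varphi_f'(s)\ge \nu s$.
\item Since $g$ is convex and $\lambda$-Lipschitz, $\varphi_g$ is convex with one-sided derivatives in $[-\lambda,\lambda]$. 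In particular its right derivative satisfies $\varphi_{g,+}'(s)\ge-\lambda$.
\end{itemize}
Consequently $\varphi_+'(s)\ge \nu s-\lambda>0$ for every $s\in(r_0,\gamma]$. So $\varphi$ is strictly increasing on $[r_0,\gamma]$. Because $\varphi$ is convex, a positive right derivative at $s=\gamma$ persists, and $\varphi$ is strictly increasing on all of $[r_0,\infty)$. Therefore $h(x_0+su)>h(x_0+r_0u)$ for all $s>r_0$, uniformly over directions $u$.

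\emph{Step 2: existence and localization.} The function $h$ is convex and hence continuous, so it attains its minimum on the compact set $\bb(x_0,r_0)$ at some point $x^*$. By Step 1, every point $y$ outside this ball satisfies $h(y)>h(z)$, where $z$ is the point on the segment $[x_0,y]$ with $\|z-x_0\|_2=r_0$. Hence $h(y)>h(z)\ge h(x^*)$, so $x^*$ is a global minimizer of $h$. The same comparison shows that every global minimizer lies in $\bb(x_0,\lambda/\nu)$.

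\emph{Step 3: uniqueness.} On $\bb(x_0,\gamma)$ the function $f$ is strictly convex, since $\nabla^2 f\succeq\nu I$ there, and $g$ is convex. So $h$ is strictly convex on this ball. Suppose $x_1\neq x_2$ were two minimizers. Both lie in $\bb(x_0,r_0)\subset\bb(x_0,\gamma)$, and so does their midpoint. Strict convexity then gives $h\big((x_1+x_2)/2\big)<\min h$, which is a contradiction.

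The only delicate point is Step 1: the bound on the one-sided derivative of $g$, and extending monotonicity beyond radius $\gamma$, where no curvature information on $f$ is available. Convexity of $\varphi$ handles this, because once its right derivative is positive it stays positive.
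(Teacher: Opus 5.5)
Your proof is correct and complete. Note that the paper gives no proof of this lemma. It is quoted from \cite{duan2023adaptive} and used as a black box in the proof of Lemma~\ref{lemma3}, so there is no in-paper argument to match, and your ray-wise argument stands as a self-contained proof.

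The steps that need care are all handled:
\begin{itemize}
\item The bound $\varphi'_+(s)\ge \nu s-\lambda$ holds on $[0,\gamma]$. It follows from $\nabla f(x_0)=0$, the mean value theorem applied to $\varphi_f'$, and $\varphi'_{g,+}\ge-\lambda$.
\item At $s=r_0$ only $\varphi'_+(r_0)\ge 0$ holds, but strict increase on $[r_0,\infty)$ still follows from convexity of $\varphi$.
\item Monotonicity of $\varphi'_+$ carries strict positivity past radius $\gamma$, which is exactly where $\lambda<\nu\gamma$ is used. Positivity at any single $s_1\in(r_0,\gamma)$ already suffices, so the argument works even if the Hessian bound holds only on the open ball.
\end{itemize}

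A common alternative route uses first-order optimality at a minimizer $x^*$, namely $-\nabla f(x^*)\in\partial g(x^*)$, so $\|\nabla f(x^*)\|_2\le\lambda$. Strong monotonicity, $\langle \nabla f(x^*)-\nabla f(x_0),x^*-x_0\rangle\ge \nu\|x^*-x_0\|_2^2$, then gives the radius $\lambda/\nu$. That route still needs a separate argument showing that a minimizer exists and lies in $\mathbb{B}(x_0,\gamma)$, where the curvature bound is available. Your Step 1 delivers existence and localization in one stroke, so your version is, if anything, the more economical of the two.
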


\begin{lemma}\label{lemma3}
Adopt the same notation $G_{\Sc}$ from Lemma \ref{lemma2}. Define 
\[
h=0.99\cdot \min_{i\in \Sc}\Big\{\lambda_i-\|\hat{A}_i-\frac{1}{|\Sc|}\sum_{i\in \Sc}\hat{A}_i\|_F \Big\}.
\]
If the following conditions are satisfied, 
\begin{align}
\label{lemma3: cond1}\frac{1}{|\Sc|}\sum_{i\in \Sc}\hat{A}_i\in G_\Sc   \\
\label{lemma3:cond2}\sum_{i\in \Sc^c}\lambda_i< |\Sc| \cdot h
\end{align}
 we have
\begin{equation}
\label{center:approx}
	\Big\|\tilde{A}_i-\frac{1}{|\Sc|}\sum_{i\in \Sc} \hat{A}_i\Big\|_F\leq \dfrac{\sum_{i\in \Sc^c} \lambda_i}{|\Sc|},\quad \forall i \in \Sc.
\end{equation}
\end{lemma}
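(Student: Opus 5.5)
The plan is to treat the center estimator $\tilde A$ from \eqref{At:huber} as a perturbation of the Huber objective restricted to the related tasks, and apply Lemma \ref{lemma: WDF2}. Then Proposition \ref{theorem:structure} transfers the bound from $\tilde A$ to each $\tilde A_i$, $i\in\Sc$. Throughout, I identify $\mathbb{R}^{d\times d}$ with $\mathbb{R}^{d^2}$, so the Frobenius norm becomes the Euclidean norm and Lemma \ref{lemma: WDF2} applies verbatim.

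\textbf{Step 1 (splitting the objective).} Write the full objective as $f(A)+g(A)$. Here $f(A)=\sum_{i\in\Sc}\rho_{\lambda_i}(\|A-\hat A_i\|_F)$ is as in Lemma \ref{lemma2}, and $g(A)=\sum_{i\in\Sc^c}\rho_{\lambda_i}(\|A-\hat A_i\|_F)$.
\begin{itemize}
\item Each $\rho_{\lambda}$ is convex, nondecreasing on $[0,\infty)$ and $\lambda$-Lipschitz. Composed with the convex $1$-Lipschitz map $A\mapsto\|A-\hat A_i\|_F$, it stays convex and $\lambda_i$-Lipschitz.
\item Hence $g$ is convex and $(\sum_{i\in\Sc^c}\lambda_i)$-Lipschitz.
\item By \eqref{lemma3: cond1} and Lemma \ref{lemma2}, $f$ has the unique minimizer $x_0:=\frac{1}{|\Sc|}\sum_{i\in\Sc}\hat A_i$.
\end{itemize}

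\textbf{Step 2 (local strong convexity of $f$).} Set $\gamma:=h$, which is positive by \eqref{lemma3: cond1}. Take any $A\in\bb(x_0,\gamma)$. By the triangle inequality,
\[
\|A-\hat A_i\|_F\le \|x_0-\hat A_i\|_F+0.99\,(\lambda_i-\|x_0-\hat A_i\|_F)<\lambda_i \quad \text{for every } i\in\Sc.
\]
So every summand of $f$ is in its quadratic regime on this ball. On a neighborhood of the ball, $f(A)=\frac12\sum_{i\in\Sc}\|A-\hat A_i\|_F^2$, which is smooth with $\nabla^2 f=|\Sc|\, I$. This gives $\nu=|\Sc|$.

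\textbf{Step 3 (applying Lemma \ref{lemma: WDF2}).} Condition \eqref{lemma3:cond2} says exactly that the Lipschitz constant of $g$ satisfies $\sum_{i\in\Sc^c}\lambda_i<\nu\gamma=|\Sc|h$. Lemma \ref{lemma: WDF2} then yields that $f+g$ has a unique minimizer, namely $\tilde A$, and that
\[
\|\tilde A-x_0\|_F\le \frac{\sum_{i\in\Sc^c}\lambda_i}{|\Sc|}.
\]

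\textbf{Step 4 (from $\tilde A$ to $\tilde A_i$).} Fix $i\in\Sc$. Using Step 3 and then \eqref{lemma3:cond2},
\[
\|\tilde A-\hat A_i\|_F\le \|x_0-\hat A_i\|_F+\frac{\sum_{j\in\Sc^c}\lambda_j}{|\Sc|}<\|x_0-\hat A_i\|_F+h\le\lambda_i .
\]
So the first case of \eqref{tildeAi} in Proposition \ref{theorem:structure} applies, giving $\tilde A_i=\tilde A$. Therefore \eqref{center:approx} holds for every $i\in\Sc$.

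The main point needing care is Step 2. One must check that the Hessian condition in Lemma \ref{lemma: WDF2} is legitimately met, i.e. that $f$ is twice differentiable on the whole ball. The slack factor $0.99$ in $h$ is what keeps all related-task terms strictly inside their quadratic zones on the closed ball, so $f$ is smooth there. Everything else is bookkeeping.
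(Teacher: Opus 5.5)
Your proof is correct and follows essentially the same route as the paper. Both split the objective into the related-task part $f$ and the outlier part $g$, use Lemma \ref{lemma2} and the $0.99$ slack in $h$ to get $\nabla^2 f = |\Sc| I$ on $\bb\bigl(\frac{1}{|\Sc|}\sum_{i\in\Sc}\hat A_i,\,h\bigr)$, apply Lemma \ref{lemma: WDF2} with Lipschitz constant $\sum_{i\in\Sc^c}\lambda_i$, and then use the strict bound $\|\tilde A-\frac{1}{|\Sc|}\sum_{i\in\Sc}\hat A_i\|_F<h$ to get $\tilde A\in G_{\Sc}$, so that Proposition \ref{theorem:structure} yields $\tilde A_i=\tilde A$ for $i\in\Sc$.
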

\begin{proof}
Define the following two convex functions:
\begin{align*}
f(A)=\sum_{i\in \Sc}\rho_{\lambda_i}(\|A-\hat{A}_i\|_F), \quad g(A)=\sum_{i\in \Sc^c}\rho_{\lambda_i}(\|A-\hat{A}_i\|_F).
\end{align*}
By Proposition \ref{theorem:structure}, $\tilde{A}$ is the minimizer of $f(A)+g(A)$. By Lemma \ref{lemma2} and Condition \eqref{lemma3: cond1}, $\frac{1}{|\Sc|}\sum_{i\in \Sc} \hat{A}_i$ is the minimizer of the function $f(A)$. We aim to utilize Lemma \ref{lemma: WDF2} to obtain \eqref{center:approx}. To this end, we first note that $h>0$ under Condition \eqref{lemma3: cond1}. For any $A$ satisfying $\|A-\frac{1}{|\Sc|}\sum_{i\in \Sc} \hat{A}_i\|_F\leq h$, since
\begin{align*}
\|A-\hat{A}_i\|_F&\leq \Big\|A-\frac{1}{|\Sc|}\sum_{i\in \Sc} \hat{A}_i\Big\|_F+\Big\|\hat{A}_i-\frac{1}{|\Sc|}\sum_{i\in \Sc} \hat{A}_i\Big\|_F \\
&<\min_{i\in \Sc}\Big\{\lambda_i-\|\hat{A}_i-\frac{1}{|\Sc|}\sum_{i\in \Sc}\hat{A}_i\|_F \Big\}+\Big\|\hat{A}_i-\frac{1}{|\Sc|}\sum_{i\in \Sc} \hat{A}_i\Big\|_F\leq \lambda_i, ~~\forall i\in \Sc,
\end{align*}
we have
\begin{align}
\label{save:time:eq}
\Big\|A-\frac{1}{|\Sc|}\sum_{i\in \Sc} \hat{A}_i\Big\|_F\leq h ~~ \Rightarrow ~~A\in G_{\Sc}.
\end{align}
Given that $f(A)=\frac{1}{2}\sum_{i\in \Sc}\|A-\hat{A}_i\|_F^2, A\in G_{\Sc}$, we can thus conclude
\[
\nabla^2 f(A) \succeq |\Sc|\boldsymbol{I}, \quad  \forall A \in \bb\Big(\frac{1}{|\Sc|}\sum_{i\in \Sc} \hat{A}_i, h\Big).
\]
Moreover, the Lipschitz property of Huber loss function $\rho_{\lambda_i}(\cdot)$ implies that $g(A)$ is Lipschitz continuous with constant $\sum_{i\in \Sc^c}\lambda_i$. Therefore, invoking Lemma \ref{lemma: WDF2} under Condition \eqref{lemma3:cond2} gives 
\begin{align}
\label{center:ineq:form}
\Big\|\tilde{A}-\frac{1}{|\Sc|}\sum_{i\in \Sc} \hat{A}_i\Big\|_F\leq \dfrac{\sum_{i\in \Sc^c} \lambda_i}{|\Sc|} < h.
\end{align}
The above together with \eqref{save:time:eq} implies $\tilde{A}\in G_{\Sc}$, i.e., $\|\tilde{A}-\hat{A}_i\|_F< \lambda_i, \forall i\in \Sc$. Then combining Proposition \ref{theorem:structure} and \eqref{center:ineq:form} completes the proof.
\end{proof}

\begin{lemma}
\label{determin:general:error}
Let Assumption \ref{assumption:data simi} hold and the following conditions be satisfied,
\begin{align}
&\lambda_i\geq 2\bigg(\Big\|\frac{1}{|\Sc|}\sum_{i\in \Sc} (\hat{A}_i-V_iV_i^T)\Big\|_F+\|\hat{A}_i-V_iV_i^T\|_F+2\delta\bigg), ~~\forall i\in \Sc, \label{general:error:con1}\\
&0.495(1-\epsilon)m\cdot \min_{i\in \Sc}\lambda_i>\sum_{i\in \Sc^c}\lambda_i. \label{general:error:con2}
\end{align}
\begin{itemize}
\item[(i)] We have for each $i\in \Sc$,
\begin{align*}
\|\tilde{V}_i\tilde{V}_i^T-V_iV_i^T\|_F\leq 2\sqrt{2}\bigg(\Big\|\frac{1}{|\Sc|}\sum_{i\in \Sc} (\hat{A}_i-V_iV_i^T)\Big\|_F+\frac{\sum_{i\in \Sc^c}\lambda_i}{(1-\epsilon)m}+2\delta\bigg).
\end{align*}
\item[(ii)] For each $i\in \Sc$, if the data have symmetric innovation and satisfy $\|\mathbb{E}\hat{A}_i-V_iV_i^T\|_F\leq 1/4$, then we have
\begin{align*}
\|\tilde{V}_i\tilde{V}_i^T-V_iV_i^T\|_F\leq 4\sqrt{2}\bigg(\Big\|\frac{1}{|\Sc|}\sum_{i\in \Sc} (\hat{A}_i-\mathbb{E}\hat{A}_i)\Big\|_F+\frac{\sum_{i\in \Sc^c}\lambda_i}{(1-\epsilon)m}+(3+\sqrt{2}/8)\delta\bigg).
\end{align*}
\end{itemize}
\end{lemma}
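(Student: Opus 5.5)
The plan is to reduce everything to Lemma \ref{lemma3}, then convert the resulting matrix closeness into eigenspace closeness with Lemma \ref{lemma: DK}. Write $\bar A_{\Sc}=\frac{1}{|\Sc|}\sum_{i\in\Sc}\hat A_i$ and $\Delta=\|\frac{1}{|\Sc|}\sum_{i\in\Sc}(\hat A_i-V_iV_i^T)\|_F$.

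\textbf{Step 1: the conditions of Lemma \ref{lemma3}.} By Assumption \ref{assumption:data simi},
\[
\|\bar A_{\Sc}-VV^T\|_F\le \Delta+\delta, \qquad \|\hat A_i-VV^T\|_F\le \|\hat A_i-V_iV_i^T\|_F+\delta .
\]
Hence $\|\hat A_i-\bar A_{\Sc}\|_F\le \Delta+\|\hat A_i-V_iV_i^T\|_F+2\delta\le \lambda_i/2$ by \eqref{general:error:con1}. This strict inequality gives \eqref{lemma3: cond1}, and it also gives $h\ge 0.99\cdot\frac12\min_{i\in\Sc}\lambda_i=0.495\min_{i\in\Sc}\lambda_i$. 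Since $|\Sc|\ge(1-\epsilon)m$, condition \eqref{general:error:con2} yields \eqref{lemma3:cond2}. Lemma \ref{lemma3} then shows that $\tilde A_i=\tilde A$ for every $i\in\Sc$ and that
\[
\|\tilde A_i-\bar A_{\Sc}\|_F\le \frac{\sum_{i\in\Sc^c}\lambda_i}{(1-\epsilon)m}.
\]

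\textbf{Step 2: part (i).} Apply Lemma \ref{lemma: DK} with $B=V_iV_i^T$, whose eigengap is $1$, and use the Frobenius form. This gives $\|\tilde V_i\tilde V_i^T-V_iV_i^T\|_F\le 2\sqrt2\|\tilde A_i-V_iV_i^T\|_F$. The triangle inequality splits the right side as
\[
\|\tilde A_i-\bar A_{\Sc}\|_F+\|\bar A_{\Sc}-\tfrac{1}{|\Sc|}\textstyle\sum_{j\in\Sc} V_jV_j^T\|_F+\|\tfrac{1}{|\Sc|}\textstyle\sum_{j\in\Sc} V_jV_j^T-V_iV_i^T\|_F .
\]
The first term is at most $\frac{\sum_{i\in\Sc^c}\lambda_i}{(1-\epsilon)m}$ by Step 1, the second equals $\Delta$, and the third is at most $2\delta$ (route through $VV^T$). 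This is exactly (i).

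\textbf{Step 3: part (ii).} Replace $V_jV_j^T$ in the middle term by $\mathbb{E}\hat A_j$, which leaves
\[
\|\tilde A_i-V_iV_i^T\|_F\le \frac{\sum_{i\in\Sc^c}\lambda_i}{(1-\epsilon)m}+\|\tfrac{1}{|\Sc|}\textstyle\sum_j(\hat A_j-\mathbb{E}\hat A_j)\|_F+\|\tfrac{1}{|\Sc|}\textstyle\sum_j\mathbb{E}\hat A_j-V_iV_i^T\|_F .
\]
The bias term $\mathbb{E}\hat A_j-V_jV_j^T$ is not small enough to leave in Frobenius norm, so it must be handled through the spectrum. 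By Lemma \ref{lemma: thm2 fan}, each $\Sigma_j^*=\mathbb{E}\hat A_j$ commutes with $V_jV_j^T$ and has $V_j$ as its top-$K$ eigenspace. Its top $K$ eigenvalues are at least $3/4$ and the rest at most $1/4$, because $\|\Sigma_j^*-V_jV_j^T\|\le 1/4$.

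I would not apply Davis--Kahan against $V_iV_i^T$. Instead, take as reference matrix $B=\Sigma_i^*$, which has gap at least $1/2$. This yields the factor $2\sqrt2/(1/2)=4\sqrt2$ and changes the target to bounding $\|\tilde A_i-\Sigma_i^*\|_F$. The noise and outlier terms are unchanged. What remains is $\|\frac{1}{|\Sc|}\sum_j\Sigma_j^*-\Sigma_i^*\|_F$, which must be shown to be $O(\delta)$ with the constant $3+\sqrt2/8$.

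\textbf{Main obstacle.} The hard step is showing $\|\Sigma_j^*-\Sigma_i^*\|_F\lesssim\delta$ for $i,j\in\Sc$. My approach is to decompose
\[
\Sigma_j^*=V_jV_j^T+E_j, \qquad E_j=\Sigma_j^*-V_jV_j^T, \qquad \|E_j\|\le\tfrac14 .
\]
Then $\|\Sigma_j^*-\Sigma_i^*\|_F\le 2\delta+\|E_j-E_i\|_F$, and I would bound $\|E_j-E_i\|_F$ using the commuting structure. Since $E_j$ is block-diagonal with respect to $V_j$, the part of $E_j-E_i$ caused by rotating the blocks is controlled by $\|E\|\cdot\|V_jV_j^T-V_iV_i^T\|_F$, which gives a contribution of order $\frac14\cdot 2\delta$.

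The genuinely difficult piece is the within-block difference of eigenvalues. That difference is not obviously controlled by $\delta$ alone, since different tasks may have different spectra. If it cannot be controlled this way, I would fall back on bounding it through the spectral-norm estimate $\|E_j\|\le 1/4$ together with the projection structure. Getting the exact constant $3+\sqrt2/8$ from this bookkeeping is where I expect most of the effort.
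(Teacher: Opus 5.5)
Your Steps 1 and 2 are correct and match the paper's proof of part (i). You verify the hypotheses of Lemma \ref{lemma3} through $\|\hat A_i-\bar A_{\mathcal S}\|_F\le\lambda_i/2$, so that $h\ge 0.495\min_{i\in\mathcal S}\lambda_i$. You then apply Davis--Kahan against $V_iV_i^T$ and split by the triangle inequality.

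Part (ii), however, has a genuine gap, located exactly where you suspected. Taking $\Sigma_i^*=\mathbb{E}\hat A_i$ as the Davis--Kahan reference forces you to show $\|\frac{1}{|\mathcal S|}\sum_{j\in\mathcal S}\Sigma_j^*-\Sigma_i^*\|_F=O(\delta)$, and this is false in general. Even when $\delta=0$, the matrices $\Sigma_j^*$ share only the invariant subspaces $\mathrm{Col}(V)$ and $\mathrm{Col}(V)^\perp$. Their restrictions to these subspaces depend on $n_j$ and on the spectrum of $\Sigma_j$. So $\Sigma_j^*-\Sigma_i^*$ is typically as large as the bias $\|\Sigma_j^*-V_jV_j^T\|_F$ (up to order $\kappa_j^2\sqrt{K}r_j/n_j$), which is precisely the term part (ii) is meant to remove. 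Your fallback through $\|E_j\|\le 1/4$ would only give an $O(1)$ term.

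The obstruction is structural, not bookkeeping. For $i\in\mathcal S$, $\tilde A_i=\tilde A$ is the same matrix for every task and concentrates around $\frac{1}{|\mathcal S|}\sum_j\Sigma_j^*$. It therefore cannot be Frobenius-close to each of the different $\Sigma_i^*$. What the tasks share is approximately an eigenspace, not a matrix.

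The missing idea is to measure everything against the center. Write $\|\tilde V_i\tilde V_i^T-V_iV_i^T\|_F\le\|\tilde V_i\tilde V_i^T-VV^T\|_F+\delta$. Decompose $\Sigma_j^*=V_j\Lambda_jV_j^T+\bar V_j\bar\Lambda_j\bar V_j^T$ via Lemma \ref{lemma: thm2 fan}, and let $\bar V$ complete $V$, i.e.\ $VV^T+\bar V\bar V^T=I_d$. Choose orthogonal $O_j,\bar O_j$ with $\|V_j-VO_j\|_F\le\delta$ and $\|\bar V_j-\bar V\bar O_j\|_F\le\delta$, and set
\[
H_j=(VO_j)\Lambda_j(VO_j)^T+(\bar V\bar O_j)\bar\Lambda_j(\bar V\bar O_j)^T,\qquad \bar H=\frac{1}{|\mathcal S|}\sum_{j\in\mathcal S}H_j.
\]
That is, keep each task's eigenvalues but rotate its eigenvectors onto $(V,\bar V)$.

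Each $H_j$ is block diagonal with respect to $(V,\bar V)$, with top block $\succeq\frac{3}{4}I$ and bottom block $\preceq\frac{1}{4}I$. The average inherits this, so $\bar H$ has top-$K$ eigenspace exactly $\mathrm{Col}(V)$ and eigengap at least $1/2$, however much the eigenvalues vary across tasks. Averaging absorbs the heterogeneity you were trying to control, rather than requiring it to be small.

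Since only eigenvectors were moved, $\|\Sigma_j^*-H_j\|_F\le 2(\|\Lambda_j\|+\|\bar\Lambda_j\|)\delta\le 3\delta$. Davis--Kahan with reference $\bar H$ then gives $\|\tilde V_i\tilde V_i^T-VV^T\|_F\le 4\sqrt2\bigl(\|\tilde A_i-\bar A_{\mathcal S}\|_F+\|\frac{1}{|\mathcal S|}\sum_j(\hat A_j-\mathbb{E}\hat A_j)\|_F+3\delta\bigr)$. Adding back $\delta=4\sqrt2\cdot\frac{\sqrt2}{8}\delta$ produces exactly the constant $3+\sqrt2/8$.
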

\begin{proof}
Proof of Part (i). According to Lemma \ref{lemma: DK}, it holds that
\begin{align}
\label{start:point:dk}
\|\tilde{V}_i\tilde{V}_i^T-V_iV_i^T\|_F\leq 2\sqrt{2}\|\tilde{A}_i-V_iV_i^T\|_F.
\end{align}
We aim to employ Lemma \ref{lemma3} to help attain the bound. To this end, we first utilize Assumption \ref{assumption:data simi} to get
\begin{align*}
    \Big\|\hat{A}_i-\frac{1}{|\Sc|}\sum_{i\in \Sc} \hat{A}_i\Big\|_F&\leq \Big\|\hat{A}_i-V_iV_i^T\Big\|_F+\Big\|V_iV_i^T-\frac{1}{|\Sc|}\sum_{i\in \Sc} V_iV_i^T\Big\|_F+\Big\|\frac{1}{|\Sc|}\sum_{i\in \Sc} (\hat{A}_i-V_iV_i^T)\Big\|_F \\
    &\leq \Big\|\hat{A}_i-V_iV_i^T\Big\|_F+\Big\|\frac{1}{|\Sc|}\sum_{i\in \Sc} (\hat{A}_i-V_iV_i^T)\Big\|_F +2\delta.
\end{align*}
Under Assumption \ref{assumption:data simi}, it is direct to verify that the above inequality together with Conditions \eqref{general:error:con1}-\eqref{general:error:con2} implies Conditions \eqref{lemma3: cond1}-\eqref{lemma3:cond2}. We can then apply Lemma \ref{lemma3} to obtain
\begin{align}
\label{pre:lemma:out}
	\Big\|\tilde{A}_i-\frac{1}{|\Sc|}\sum_{i\in \Sc} \hat{A}_i\Big\|_F\leq \dfrac{\sum_{i\in \Sc^c} \lambda_i}{|\Sc|}\leq \dfrac{\sum_{i\in \Sc^c} \lambda_i}{(1-\epsilon)m},\quad \forall i \in \Sc,
\end{align}
leading to 
\begin{align}
&\|\tilde{A}_i-V_iV_i^T\|_F \nonumber \\
\leq &\Big \|\tilde{A}_i-\frac{1}{|\Sc|}\sum_{i\in \Sc} \hat{A}_i\Big \|_F+\Big\|\frac{1}{|\Sc|}\sum_{i\in \Sc} (\hat{A}_i-V_iV_i^T)\Big\|_F+\Big\|\frac{1}{|\Sc|}\sum_{i\in \Sc} V_iV_i^T-V_iV_i^T\Big\|_F \nonumber \\
\leq &  \dfrac{\sum_{i\in \Sc^c} \lambda_i}{(1-\epsilon)m}+\Big\|\frac{1}{|\Sc|}\sum_{i\in \Sc} (\hat{A}_i-V_iV_i^T)\Big\|_F+2\delta, ~~\forall i \in \Sc. \label{alternative:bound}
\end{align}
Putting together \eqref{start:point:dk} and \eqref{alternative:bound} completes the proof of Part (i).

Proof of Part (ii). Let the columns of $\bar{V}_i\in \mathbb{R}^{d\times (d-K)}$ denote the smallest $(d-K)$ eigenvectors of $\Sigma_i$. Under the additional conditions of Part (ii), we can invoke Lemma \ref{lemma: thm2 fan} to write the eigen-decomposition of $\mathbb{E}\hat{A}_i$ in the form:
\[
\mathbb{E}\hat{A}_i=V_i\Lambda_iV_i^T+\bar{V}_i\bar{\Lambda}_i\bar{V}_i^T,
\]
where the diagonal matrices $\Lambda_i,\bar{\Lambda}_i$ contain the top $K$ and remaining eigenvalues of $\mathbb{E}\hat{A}_i$ respectively. Moreover, Lemma \ref{ine:weyl} together with the condition $\|\mathbb{E}\hat{A}_i-V_iV_i^T\|_F\leq 1/4$ shows that $\sigma_{\min}(\Lambda_i)\geq 3/4$ and $\sigma_{\max}(\bar{\Lambda}_i)\leq 1/4$. For each $i\in \Sc$, there exist orthogonal matrices $O_i\in \mathbb{R}^{K\times K}, \bar{O}_i\in \mathbb{R}^{(d-K)\times (d-K)}$ such that
\begin{align}
\label{small:err:approx}
\|V_i-VO_i\|_F\leq \|V_iV_i^T-VV^T\|_F\leq \delta, \quad \|\bar{V}_i-\bar{V}\bar{O}_i\|_F\leq \|\bar{V}_i\bar{V}_i^T-\bar{V}\bar{V}^T\|_F\leq \delta,
\end{align}
where $V\in \mathbb{R}^{d\times K}$ is the ``center" matrix in Assumption \ref{assumption:data simi} and $\bar{V}\in \mathbb{R}^{d\times (d-K)}$ satisfies $VV^T+\bar{V}\bar{V}^T=I_d$. Construct the following matrices:
\[
H_i=(VO_i)\Lambda_i (VO_i)^T+(\bar{V}\bar{O}_i)\bar{\Lambda}_i (\bar{V}\bar{O}_i)^T, \quad i\in \Sc.
\]
It is clear that $H_i$ shares the same top $K$ eigenspace ${\rm Col}(V)$. Define the average matrix 
\[
\frac{1}{|\Sc|}\sum_{i\in \Sc}H_i=V\Big(\frac{1}{|\Sc|}\sum_{i\in \Sc}O_i\Lambda_i O_i^T\Big)V^T+\bar{V}\Big(\frac{1}{|\Sc|}\sum_{i\in \Sc}\bar{O}_i\bar{\Lambda}_i \bar{O}_i^T\Big)\bar{V}^T.
\]
Since 
\begin{align*}
&\sigma_{\min}\Big(\frac{1}{|\Sc|}\sum_{i\in \Sc}O_i\Lambda_i O_i^T\Big)\geq \frac{1}{|S|}\sum_{i\in \Sc}\sigma_{\min}(O_i\Lambda_i O_i^T)\geq 3/4, \\
&\sigma_{\max}\Big(\frac{1}{|\Sc|}\sum_{i\in \Sc}\bar{O}_i\bar{\Lambda}_i \bar{O}_i^T\Big)\leq \frac{1}{|\Sc|}\sum_{i\in \Sc}\sigma_{\max}(\bar{O}_i\bar{\Lambda}_i \bar{O}_i^T)\leq 1/4,
\end{align*}
we know that $\frac{1}{|\Sc|}\sum_{i\in \Sc}H_i$ has the same top $K$ eigenspace ${\rm Col}(V)$, and it has eigengap (between $K$-th and $(K+1)$-th eigenvalues) at least $1/2$. Based on these results, we now bound $\|\tilde{V}_i\tilde{V}_i^T-V_iV_i^T\|_F$ as follows: $\forall i\in \Sc$,
\begin{align}
\label{key:bias:start}
&~~~~\|\tilde{V}_i\tilde{V}_i^T-V_iV_i^T\|_F \nonumber \\ 
&\leq \|\tilde{V}_i\tilde{V}_i^T-VV^T\|_F+\|VV^T-V_iV_i^T\|_F\nonumber \\
&\leq 4\sqrt{2}\Big\|\tilde{A}_i-\frac{1}{|\Sc|}\sum_{i\in \Sc}H_i\Big\|_F+\delta \nonumber \\
&\leq 4\sqrt{2}\Big\|\tilde{A}_i-\frac{1}{|\Sc|}\sum_{i\in \Sc}\hat{A}_i\Big\|_F+4\sqrt{2}\Big\|\frac{1}{|\Sc|}\sum_{i\in \Sc}(\hat{A}_i-H_i)\Big\|_F+\delta \nonumber \\
&\leq 4\sqrt{2}\Big\|\frac{1}{|\Sc|}\sum_{i\in \Sc}(\hat{A}_i-H_i)\Big\|_F+\delta+\dfrac{4\sqrt{2}\sum_{i\in \Sc^c} \lambda_i}{(1-\epsilon)m} \nonumber \\
&\leq 4\sqrt{2}\Big\|\frac{1}{|\Sc|}\sum_{i\in \Sc}(\mathbb{E}\hat{A}_i-H_i)\Big\|_F+4\sqrt{2}\Big\|\frac{1}{|S|}\sum_{i\in \Sc}(\hat{A}_i-\mathbb{E}\hat{A}_i)\Big\|_F+\delta+\dfrac{4\sqrt{2}\sum_{i\in \Sc^c} \lambda_i}{(1-\epsilon)m},
\end{align}
where the second inequality is by Lemma \ref{lemma: DK}, and the fourth inequality is due to \eqref{pre:lemma:out}. It remains to bound $\big\|\frac{1}{|\Sc|}\sum_{i\in \Sc}(\mathbb{E}\hat{A}_i-H_i)\big\|_F$:
\begin{align}
\label{key:bias:term}
&\Big\|\frac{1}{|\Sc|}\sum_{i\in \Sc}(\mathbb{E}\hat{A}_i-H_i)\Big\|_F \nonumber \\
\leq &\frac{1}{|\Sc|}\sum_{i\in \Sc} \Big(\|(VO_i)\Lambda_i (VO_i)^T-V_i\Lambda_i V_i^T\|_F+\|(\bar{V}\bar{O}_i)\bar{\Lambda}_i (\bar{V}\bar{O}_i)^T-\bar{V}_i\bar{\Lambda}_i\bar{V}_i^T\|_F\Big) \nonumber \\
\leq &\frac{2}{|\Sc|}\sum_{i\in \Sc} \Big(\sigma_{\max}(\Lambda_i)\|VO_i-V_i\|_F+\sigma_{\max}(\bar{\Lambda}_i)\|\bar{V}\bar{O}_i-\bar{V}_i\|_F\Big) \leq 3\delta.
\end{align}
Here, the second inequality holds since $\|A\Delta A^T-B\Delta B^T\|_F\leq \|(A-B)\Delta A^T\|_F+\|B\Delta(A-B)^T\|_F\leq 2 \sqrt{\sigma_{\max}(\Delta^2)}\|A-B\|_F$ holds for orthonormal matrices $A,B$ and diagonal matrix $\Delta$, and the last inequality is due to \eqref{small:err:approx}. Combining \eqref{key:bias:start}-\eqref{key:bias:term} finishes the proof of Part (ii). 
\end{proof}
\subsubsection{Completion of the proof}

\begin{proof}

We first state several useful probabilistic results:
\begin{enumerate}
\item[(a)] Given any $t\in [(1-\log m)_+,\min_{i\in \Sc}n_i/2]$, with probability at least $1-\exp(-t)$, it holds that
\begin{align}
\label{single:learning:rate:union}
\max_{i\in \Sc}\|\hat{A}_i-V_iV_i^T\|_F\leq C_1 \max_{i\in \Sc} \sqrt{\frac{K\kappa_i^2(r_i+\log m+t)}{n_i}}.
\end{align}
This result is a direct application of Lemmas \ref{lemma: DK} and \ref{lemma:cov}, with the union bound. Note that under the conditions $\max_{i\in \Sc}\frac{\sqrt{K}\kappa_i^2r_i}{n_i}\leq c_1, \min_{i\in \Sc}n_i\geq 2\log m$, the choice of $t$ ensures $1\leq t+\log m \leq n_i, r_i\leq n_i$ so that Lemma \ref{lemma:cov} can be employed properly.
\item[(b)] For any given $\tilde{t}\geq 0$, the following holds with probability at least $1-2\exp(-\tilde{t})$,
\begin{align}
\label{sub:exp:bound:aver}
\Big\|\frac{1}{|\Sc|}\sum_{i\in \Sc} (\hat{A}_i-\mathbb{E}\hat{A}_i)\Big\|_F\leq C_2 \tilde{t}\sqrt{\frac{1}{|\Sc|^2}\sum_{i\in \Sc}\frac{K\kappa_i^2r_i}{n_i}}.
\end{align}
The result is directly due to Lemmas \ref{lemma: subexponentail norm} and \ref{lemma:sum of exponential norms}. Further applying Lemma \ref{lemma: thm3 fan} shows that with the same probability,
\begin{align}
\label{another:bound:aver}
\Big\|\frac{1}{|\Sc|}\sum_{i\in \Sc} (\hat{A}_i-V_iV_i^T)\Big\|_F\leq C_2 \tilde{t}\sqrt{\frac{1}{|\Sc|^2}\sum_{i\in \Sc}\frac{K\kappa_i^2r_i}{n_i}}+C_3\frac{1}{|\Sc|}\sum_{i\in \Sc}\frac{\sqrt{K}\kappa_i^2r_i}{n_i}.
\end{align}
\end{enumerate}

To prove Theorem \ref{theorem:upperbound}, we consider two cases. 

\begin{itemize}
\item[] Case I: $\delta\leq \max_{i\in \Sc} \sqrt{\frac{K\kappa_i^2(r_i+\log m+t)}{n_i}}$. In this case, we aim to apply Lemma \ref{determin:general:error} to obtain the desirable bound. Since all the $\lambda_i$'s are identical and $\epsilon\leq 1/4$, it is straightforward to confirm that Condition \eqref{general:error:con2} is satisfied. Results \eqref{single:learning:rate:union} and \eqref{another:bound:aver} imply that with probability at least $1-2\exp(-\tilde{t})-\exp(-t)$, it holds that $\forall i\in \Sc$,
\begin{align*}
&\Big\|\frac{1}{|\Sc|}\sum_{i\in \Sc} (\hat{A}_i-V_iV_i^T)\Big\|_F+\|\hat{A}_i-V_iV_i^T\|_F+2\delta \\
\leq &C_1 \max_{i\in \Sc} \sqrt{\frac{K\kappa_i^2(r_i+\log m+t)}{n_i}}+C_2 \tilde{t}\sqrt{\frac{1}{|\Sc|^2}\sum_{i\in \Sc}\frac{K\kappa_i^2r_i}{n_i}}+C_3\frac{1}{|\Sc|}\sum_{i\in \Sc}\frac{\sqrt{K}\kappa_i^2r_i}{n_i}+2\delta.
\end{align*}
Given that $\max_{i\in \Sc}\frac{\sqrt{K}\kappa_i^2r_i}{n_i}\leq c_1$ and $\tilde{t}\leq c_2c_3\sqrt{|\Sc|}$, each term on the right-hand side of the above inequality is smaller than $\lambda_i/8$ as long as the constant $c_2$ in $\lambda_i$ is chosen sufficiently large. Hence, Condition \eqref{general:error:con1} is also satisfied. We can invoke Part (i) of Lemma \ref{determin:general:error} to obtain
\begin{align}
\label{proof:bound:form1}
\max_{i\in \Sc}\|\tilde{V}_i\tilde{V}_i^T-V_iV_i^T\|_F &\lesssim \tilde{t}\sqrt{\frac{1}{|\Sc|^2}\sum_{i\in \Sc}\frac{K\kappa_i^2 r_i}{n_i}}+\delta\nonumber \\
&+\epsilon \max_{i\in \Sc}\sqrt{\dfrac{K\kappa_i^2(r_i+\log m+ t)}{n_i}}+\frac{1}{|\Sc|}\sum_{i\in \Sc}\frac{K^{1/2}\kappa_i^2r_i}{n_i}.
\end{align}
If the data on $\Sc$ have symmetric innovation, since the condition $\max_{i\in \Sc}\frac{\sqrt{K}\kappa_i^2r_i}{n_i}\leq c_1$ guarantees $\max_{i\in \Sc}\|\mathbb{E}\hat{A}_i-V_iV_i^T\|_F\leq 1/4$ by Lemma \ref{lemma: thm3 fan}, we can invoke Part (ii) of Lemma \ref{determin:general:error} to drop the last term in \eqref{proof:bound:form1}. 

\item[] Case II: $\delta > \max_{i\in \Sc} \sqrt{\frac{K\kappa_i^2(r_i+\log m+t)}{n_i}}$. We use Proposition \ref{theorem:structure} and \eqref{single:learning:rate:union} to obtain
\begin{align}
\label{single:task:error}
\|\tilde{V}_i\tilde{V}_i^T-V_iV_i^T\|_F&\leq 2\sqrt{2}\|\tilde{A}_i-V_iV_i^T\|_F\leq 2\sqrt{2}\|\tilde{A}_i-\hat{A}_i\|_F+2\sqrt{2}\|\hat{A}_i-V_iV_i^T\|_F \nonumber \\
\leq &2\sqrt{2} \lambda_i+2\sqrt{2}\|\hat{A}_i-V_iV_i^T\|_F \lesssim \max_{i\in \Sc}\sqrt{\dfrac{K\kappa_i^2(r_i+\log m+ t)}{n_i}}.
\end{align}
\end{itemize}
Combining the bounds in both cases completes the proof of Part (i) of Theorem \ref{theorem:upperbound}. Result \eqref{bound:outlier:task} of Part (ii) can be derived similarly as in Case II above. 

\end{proof}

\subsection{Proof of Theorems \ref{theorem:minimax1} and \ref{theorem:minimax2}}
\label{proof:them:2and3}

Throughout this section, we use $\mathbb{V}_{d,K}$ to denote the Stiefel manifold, i.e., $\mathbb{V}_{d,K}=\big\{V\in \mathbb{R}^{d\times K}: V^TV=I_K\big\}$. For two distributions $\mathbb{P}_1,\mathbb{P}_2$, $\kl (\mathbb{P}_1|\mathbb{P}_2  )$ and $\tv(\Pb_1,\Pb_2)$ denote the Kullback–Leibler divergence and total variation distance between $\Pb_1$ and $\Pb_2$, respectively.

\subsubsection{Proof of Theorem \ref{theorem:minimax1}}
\begin{proof}
We first obtain the minimax lower bound over the submodel $\mathcal{M}(\delta,0,\sigma)$. We denote the parameter of the model $\mathcal{M}(\delta,0,\sigma)$ by $\theta=\{V_jV_j^T\}_{j\in [m]}$ and define the pseudo-metric as
\[
\rho(\theta,\tilde{\theta})=\|V_1V_1^T-\tilde{V}_1\tilde{V}_1^T\|_F,~ {\rm~for~}\theta=\{V_jV_j^T\}_{j\in [m]}, \tilde{\theta}=\{\tilde{V}_j\tilde{V}_j^T\}_{j\in [m]}.
\]
Take the subset $\{W_i\}_{i\in [N]}\subseteq \mathbb{V}_{d,K}$ specified in Lemma \ref{lemma:packing construction}, and construct the set of parameters: 
\begin{align*}
\theta^{(i)}=\{V^{(i)}_j(V^{(i)}_j)^T\}_{j\in [m]} {\rm ~with~} V^{(i)}_1(V^{(i)}_1)^T=W_iW_i^T, V^{(i)}_j(V^{(i)}_j)^T=W_1W_1^T, j=2,\ldots, m,
\end{align*}
for $i\in [N]$. Lemma \ref{lemma:packing construction} implies 
\begin{align}
& \log N\geq Kd/3, \label{pack:set:dis}\\
&\rho(\theta^{(l)},\theta^{(k)})=\|W_lW_l^T-W_kW_k^T\|_F\geq c_1\sqrt{K}\Delta, {\rm~for~all~}1\leq l\neq k \leq N, \label{lower:dist:error} \\
& \max_{i\in [N],j,\tilde{j}\in [m]}\|V^{(i)}_j(V^{(i)}_j)^T-V^{(i)}_{\tilde{j}}(V^{(i)}_{\tilde{j}})^T\|_F=\max_{i\in[N]}\|W_1W_1^T-W_iW_i^T\|_F\leq c_2\sqrt{K}\Delta, \label{upper:dist:error}
\end{align}
where $c_1, c_2>0$ are some absolute constants and $\Delta\in (0,1/2]$ is any given constant. We consider the following specific choice of $\Delta$, 
\begin{align}
\label{delta:choice:set}
\Delta=\frac{1}{\sqrt{K}}\Bigg[\frac{\delta}{c_2}\wedge \sqrt{\frac{(\sigma+1)Kd}{10c_2^2n\sigma^2}}\Bigg].
\end{align}
Note that $\Delta \leq \sqrt{\frac{(\sigma+1)d}{10c_2^2n\sigma^2}}\leq 1/2$ as long as $\frac{(\sigma+1)d}{\sigma^2n}\leq 5c_2^2/2$. We have the following observations:
\begin{itemize}
\item \eqref{upper:dist:error}-\eqref{delta:choice:set} show that $\max_{i\in [N],j,\tilde{j}\in [m]}\|V^{(i)}_j(V^{(i)}_j)^T-V^{(i)}_{\tilde{j}}(V^{(i)}_{\tilde{j}})^T\|_F\leq \delta$. Hence, the set $\{\theta^{(i)}\}_{i\in[N]}$ belongs to the parameter space of $\mathcal{M}(\delta,0,\sigma)$.
\item \eqref{upper:dist:error}-\eqref{delta:choice:set} also yield that
\begin{align*}
\kl (\Pb_{\theta^{(l)}}|\Pb_{\theta^{(k)}}  )=\frac{n\sigma^2}{4(1+\sigma)}\|W_lW_l^T-W_kW_k^T\|_F^2\leq \frac{Kd}{40}, {\rm~for~all~}l\neq k.
\end{align*}
\item \eqref{lower:dist:error} and \eqref{delta:choice:set} together give
\begin{align*}
   \min_{1\leq l\neq k\leq N}\rho(\theta^{(l)},\theta^{(k)})\geq \frac{c_1\delta}{c_2}\wedge \sqrt{\frac{c_1^2(\sigma+1)Kd}{10c_2^2n\sigma^2}}.
\end{align*}
So $\{\theta^{(i)}\}_{i\in [N]}$ is a $\frac{c_1\delta}{c_2}\wedge \sqrt{\frac{c_1^2(\sigma+1)Kd}{10c_2^2n\sigma^2}}$-separated set.
\end{itemize}
The above results enable us to apply a standard Fano's argument (see Lemma \ref{lemma: Fano}) to conclude
\begin{align}
&\inf_{\hat{V}_1\in\mathbb{V}_{d,K}} \sup_{\mathbb{P}\in \mathcal{M}(\delta,0,\sigma)} \mathbb{P}\Bigg( \|\hat{V}_1\hat{V}_1^T-\vt{1}\|_F\geq \frac{c_1\delta}{2c_2}\wedge \sqrt{\frac{c_1^2(\sigma+1)Kd}{40c_2^2n\sigma^2}}\Bigg) \nonumber \\
\geq& 1-\frac{\log 2 +Kd/40}{Kd/3}\geq 0.7, \label{no:outlier:model}
\end{align}
where we have used \eqref{pack:set:dis} in the first inequality and the condition $d\geq 10 K$ in the second inequality. Alternatively, we consider $\Delta=\sqrt{\frac{(\sigma+1)d}{10c_2^2nm\sigma^2}}$ and parameter values:
\begin{align}
\label{both:case:suit}
\theta^{(i)}=\{V^{(i)}_j(V^{(i)}_j)^T\}_{j\in [m]} {\rm ~with~} V^{(i)}_j(V^{(i)}_j)^T=W_iW_i^T, j\in [m],i\in [N].
\end{align}
In a similar way, we can verify that $\Delta \leq 1/2$ and the set $\{\theta^{(i)}\}_{i\in [N]}$ belongs to the parameter space of $\mathcal{M}(\delta,0,\sigma)$. Moreover,
\begin{align*}
&\max_{l\neq k}\kl (\Pb_{\theta^{(l)}}|\Pb_{\theta^{(k)}}  )=\frac{mn\sigma^2}{4(1+\sigma)}\|W_lW_l^T-W_kW_k^T\|_F^2\leq \frac{Kd}{40}, \\
&\min_{1\leq l\neq k\leq N}\rho(\theta^{(l)},\theta^{(k)})\geq \sqrt{\frac{c_1^2(\sigma+1)Kd}{10c_2^2nm\sigma^2}}.
\end{align*}
Applying again the Fano's argument (Lemma \ref{lemma: Fano}) shows
\begin{align}
\label{both:case:suit:bound}
\inf_{\hat{V}_1\in\mathbb{V}_{d,K}} \sup_{\mathbb{P}\in \mathcal{M}(\delta,0,\sigma)} \mathbb{P}\Bigg( \|\hat{V}_1\hat{V}_1^T-\vt{1}\|_F\geq  \sqrt{\frac{c_1^2(\sigma+1)Kd}{40c_2^2nm\sigma^2}}\Bigg) \geq 0.7. 
\end{align}
Combining the above with \eqref{no:outlier:model} gives us
\begin{align}
\label{no:outlier:final:bound}
&\inf_{\{\hat{V}_i\}_{i\in \Sc}\subseteq \mathbb{V}_{d,K}} \sup_{\mathbb{P}\in \mathcal{M}(\delta,0,\sigma)} \mathbb{P}\Bigg( \max_{i\in \Sc}\|\hat{V}_i\hat{V}_i^T-\vt{i}\|_F\geq  \frac{c_1\delta}{4c_2}\wedge \sqrt{\frac{c_1^2(\sigma+1)Kd}{160c_2^2n\sigma^2}}+\sqrt{\frac{c_1^2(\sigma+1)Kd}{160c_2^2nm\sigma^2}}\Bigg) \nonumber \\
\geq  &\inf_{\hat{V}_1\in\mathbb{V}_{d,K}} \sup_{\mathbb{P}\in \mathcal{M}(\delta,0,\sigma)} \mathbb{P}\Bigg( \|\hat{V}_1\hat{V}_1^T-\vt{1}\|_F\geq  \frac{c_1\delta}{4c_2}\wedge \sqrt{\frac{c_1^2(\sigma+1)Kd}{160c_2^2n\sigma^2}}+\sqrt{\frac{c_1^2(\sigma+1)Kd}{160c_2^2nm\sigma^2}}\Bigg) \geq 0.7
\end{align}

We next obtain the minimax lower bound over the submodel $\mathcal{M}(0,\epsilon,\sigma)$. Since $\delta=0$, we may write $\Sigma_i=\Sigma=\sigma VV^T+I_d$ for $i\in \Sc$, so that each task $i\in \Sc$ has $n$ i.i.d. samples drawn from $\mathcal{N}(0,\Sigma)$. The model $\mathcal{M}(0,\epsilon,\sigma)$ is closely related to the Huber's $\epsilon$-contamination model: 
\begin{equation}
\mathbb{P}_{(\epsilon,VV^T,Q)}:= (1-\epsilon)\mathcal{N}^n(0,\Sigma)+\epsilon \mathbb{Q},\label{random huber contamination} 
\end{equation}
where $\mathcal{N}^n(0,\Sigma)$ denotes the $n$-fold product of $\mathcal{N}(0,\Sigma)$ and $Q$ is an arbitrary distribution. Theorem 5.1 in \cite{chen2018robust} provides a general minimax lower bound theory for the Huber's $\epsilon$-contamination model. To invoke the theory, let $\Sigma_i=\sigma \vt{i} +I_d, \Pb_i=\mathcal{N}^n(0, \Sigma_i),i=1,2$, and we need to compute 
\begin{align*}
&\sup\left\{ \|\vt{1}-\vt{2}\|_F :V_1,V_2\in \mathbb{V}_{d,K},\tv(\Pb_1,\Pb_2)\leq \frac{\epsilon}{1-\epsilon}\right\} \\
\geq &    \sup\left\{ \| \vt{1}-\vt{2}\|_F: V_1,V_2\in \mathbb{V}_{d,K},\kl(\Pb_1|\Pb_2)\leq \frac{2\epsilon^2}{(1-\epsilon)^2}\right\}\\
=& \epsilon\sqrt{\frac{8(\sigma+1)}{\sigma^2n(1-\epsilon)^2}} \geq \epsilon\sqrt{\frac{8(\sigma+1)}{\sigma^2n}},
\end{align*}
where the first inequality holds since $\tv(\Pb_1,\Pb_2)\leq \sqrt{\frac{1}{2}\kl(\Pb_1|\Pb_2)} $, and the first equality is due to the identity $\kl(\Pb_1,\Pb_2)=\frac{n\sigma ^2}{4(\sigma+1)}\|\vt{1}-\vt{2}\|^2_F$. Then by Theorem 5.1 of \cite{chen2018robust}, it holds that 
\begin{equation}
\label{huber:lower:bound}
 \inf_{\hat{V}\in\mathbb{V}_{d,K}} \sup_{V\in \mathbb{V}_{d,K}}\sup_Q \mathbb{P}_{(\epsilon,VV^T,Q)}\lb \|\hat{V}\hat{V}^T-\vt{}\|_F \geq \epsilon\sqrt{\frac{2(\sigma+1)}{\sigma^2n}} \rb\geq  0.5,
\end{equation}
where $\hat{V}\in\mathbb{V}_{d,K}$ is any measurable function of independent samples from $\mathbb{P}_{(\epsilon,VV^T,Q)}$. Now we aim to convert the above lower bound to one under our model $\mathcal{M}(0,\epsilon,\sigma)$. To this end, let  $b_1,\ldots, b_m$ be i.i.d. Bernoulli random variable with success probability $\epsilon$, $Z_i|b_i=0 \overset{i.i.d.}{\sim} \mathcal{N}^n(0,\Sigma)$ and $Z_i|b_i=1 \overset{i.i.d.}{\sim} Q$. We thus have $Z_1,\ldots, Z_m \overset{i.i.d.}{\sim} \mathbb{P}_{(\epsilon,VV^T,Q)}$. Define the events $H_k=\{ \sum_{i=1}^m b_i=k\}$ and $B=\{ \|\hat{V}\hat{V}^T-VV^T\|_F \geq a\}$ where $\hat{V}$ is a measurable function of $\{Z_i\}_{i\in [m]}$ and $a$ is a fixed constant. We then have
\begin{align*}
    \mathbb{P}(B)=&\sum_{k> 2\epsilon m} \mathbb{P}\left(H_k\right)\mathbb{P}(B|H_k) +\sum_{k\leq 2\epsilon m} \mathbb{P}\left(H_k\right)\mathbb{P}(B|H_k)\\
    \leq &\Pb\left(\sum_{i=1}^m b_i> 2\epsilon m\right)+\sup_{k\leq 2\epsilon m }\mathbb{P}(B|H_k).
\end{align*}
Note that $\mathbb{P}(B)$ is evaluated under $\mathbb{P}=\mathbb{P}_{(\epsilon,VV^T,Q)}$, and $\sup_{k\leq 2\epsilon m }\mathbb{P}(B|H_k)\leq \sup_{\mathbb{P}\in \mathcal{M}(0,2\epsilon,\sigma)} \mathbb{P}(B)$. Hence, the above inequality leads to
\begin{equation}
    \label{convert:middle}
  \sup_{\mathbb{P}\in \mathcal{M}(0,2\epsilon,\sigma)} \mathbb{P}(B) \geq \sup_{V\in \mathbb{V}_{d,K}}\sup_Q \mathbb{P}_{(\epsilon,VV^T,Q)}(B)-\Pb\left(\sumi b_i\geq 2\epsilon m\right).
\end{equation}
By Hoeffding's inequality,
\begin{equation}
\label{hoeffding:eq}
 \Pb\Big(\frac{1}{m}\sum_{i=1}^mb_i-\epsilon\geq t \Big)\leq \exp(-2mt^2) \Longrightarrow  \Pb\Big(\sum_{i=1}^n b_i\geq  2\epsilon m \Big)   \leq \exp(-2m\epsilon^2).  
\end{equation}
\eqref{convert:middle}-\eqref{hoeffding:eq} together show
\begin{equation*}
 \sup_{\mathbb{P}\in \mathcal{M}(0,2\epsilon,\sigma)} \mathbb{P}(B) \geq \sup_{V\in \mathbb{V}_{d,K}}\sup_Q \mathbb{P}_{(\epsilon,VV^T,Q)}(B)-0.0001,
\end{equation*}
when $\epsilon^2m\geq 5$. Taking the infimum over $\hat{V}\in\mathbb{V}_{d,K}$ on both sides of the above inequality and applying \eqref{huber:lower:bound} gives
\begin{align}
\label{outlier:involve:err}
&\inf_{\{\hat{V}_i\}_{i\in \Sc}\subseteq \mathbb{V}_{d,K}} \sup_{\mathbb{P}\in \mathcal{M}(0,\epsilon,\sigma)}  \mathbb{P}\Bigg( \max_{i\in \Sc}\|\hat{V}_i\hat{V}_i^T-\vt{i}\|_F\geq \epsilon\sqrt{\frac{(\sigma+1)}{2\sigma^2n}}\Bigg)\nonumber \\
\geq &\inf_{\hat{V}\in\mathbb{V}_{d,K}} \sup_{\mathbb{P}\in \mathcal{M}(0,\epsilon,\sigma)} \mathbb{P}\Bigg( \|\hat{V}\hat{V}^T-\vt{}\|_F\geq \epsilon\sqrt{\frac{(\sigma+1)}{2\sigma^2n}}\Bigg)\geq 0.4999.
\end{align}
Combining \eqref{no:outlier:final:bound} and \eqref{outlier:involve:err}, we obtain
\begin{align}\label{minimax1:e3}
\inf_{\{\hat{V}_i\}_{i\in \Sc}\subseteq \mathbb{V}_{d,K}} \sup_{\mathbb{P}\in \mathcal{M}(\delta,\epsilon,\sigma)}  \mathbb{P}\Bigg( \max_{i\in \Sc}\|\hat{V}_i\hat{V}_i^T-\vt{i}\|_F\geq  &\frac{c_1\delta}{8c_2}\wedge \sqrt{\frac{c_1^2(\sigma+1)Kd}{640c_2^2n\sigma^2}}\nonumber \\
&+\sqrt{\frac{c_1^2(\sigma+1)Kd}{640c_2^2nm\sigma^2}} +\epsilon\sqrt{\frac{(\sigma+1)}{8\sigma^2n}}\Bigg) \geq 0.4999,
\end{align}
given $\epsilon^2m\geq 5$. When $\epsilon^2m\leq 5$, we have
\begin{equation*}
  \epsilon\sqrt{\frac{(\sigma+1)}{\sigma^2n}}\leq \sqrt{\frac{(\sigma+1)Kd}{2nm\sigma^2}}
\end{equation*}
under the condition $Kd\geq 10K^2\geq 10$. Thus, \eqref{no:outlier:final:bound} directly implies \eqref{minimax1:e3} (by adjusting the absolute constants in the lower bound).

\end{proof}

\subsubsection{Proof of Theorem \ref{theorem:minimax2}}

\begin{proof}

We first obtain the minimax lower bound over the submodel $\mathcal{A}(\delta,0,\sigma)$. As in the proof of Theorem \ref{theorem:minimax1}, we would like to apply the Fano's argument (Lemma \ref{lemma: Fano}). Define the metric
\begin{align}
\label{real:metric:def}
\rho(\theta,\tilde{\theta})=\sqrt{\frac{1}{m}\sum_{j=1}^m\|V_jV_j^T-\tilde{V}_j\tilde{V}_j^T\|_F^2},~ {\rm~for~}\theta=\{V_jV_j^T\}_{j\in [m]}, \tilde{\theta}=\{\tilde{V}_j\tilde{V}_j^T\}_{j\in [m]}.
\end{align}
We again utilize the subset $\{W_i\}_{i\in [N]}\subseteq \mathbb{V}_{d,K}$ from Lemma \ref{lemma:packing construction}. Let $\mathcal{H}=\{h_{l}\}_{l\in M}\subseteq[N]^m$ be the subset stated in Lemma \ref{pack:set:construct:ref}, hence it satisfies
\begin{align}
\label{code:book:set}
M\geq N^{m/4}, \quad \sum_{j=1}^m\mathbb{I}(h_{lj}\neq h_{\tilde{l}j})\geq \lceil m/4 \rceil,~~ \forall l\neq \tilde{l},
\end{align}
as long as $N\geq 4e+1$ and $m\geq 8$. We construct the set of parameter values $\{\theta^{(l)}\}_{l\in [M]}$ as follows:
\begin{align*}
\theta^{(l)}=\{V^{(l)}_j(V^{(l)}_j)^T\}_{j\in [m]} {\rm ~with~} V^{(l)}_j(V^{(l)}_j)^T=W_{h_{lj}}W_{h_{lj}}^T, j\in [m], l\in [M].
\end{align*}
Consider the case $\delta \leq \sqrt{\frac{(\sigma+1)dK}{10n\sigma^2}}$ and set $\Delta=\frac{\delta}{c_2\sqrt{K}}$. Based on Lemma \ref{lemma:packing construction} and \eqref{code:book:set}, we have the following observations:
\begin{itemize}
\item $\Delta\leq 1/2$ under the given condition $\frac{(\sigma+1)d}{\sigma^2n}\leq 5c_2^2/2$. So the choice of $\Delta$ satisfies the requirement of Lemma \ref{lemma:packing construction}.
\item $\max_{l\in [M],j,\tilde{j}\in[m]}\|V^{(l)}_j(V^{(l)}_j)^T-V^{(l)}_{\tilde{j}}(V^{(l)}_{\tilde{j}})^T\|_F\leq \max_{i,j\in [N]}\|W_iW_i^T-W_jW_j^T\|_F\leq c_2\sqrt{K}\Delta=\delta$. Hence, the set $\{\theta^{(l)}\}_{l\in [M]}$ belongs to the parameter space of $\mathcal{A}(\delta,0,\sigma)$.
\item For all $l,k\in [M]$, it holds that 
\begin{align*}
\kl (\Pb_{\theta^{(l)}}|\Pb_{\theta^{(k)}})=&\sum_{j=1}^m\frac{n\sigma^2}{4(1+\sigma)}\|V^{(l)}_j(V^{(l)}_j)^T-V^{(k)}_j(V^{(k)}_j)^T\|_F^2 \\
\leq &\frac{mn\sigma^2}{4(1+\sigma)}\max_{i,j\in [N]}\|W_iW_i^T-W_jW_j^T\|_F^2\leq \frac{nm\sigma^2c_2^2K\Delta^2}{4(1+\sigma)} \\
= & \frac{nm\sigma^2}{4(1+\sigma)}\delta^2\leq \frac{mdk}{40}.
\end{align*}

\item For all $1\leq l \neq k\leq M$, the following hold
\begin{align*}
\rho(\theta^{(l)},\theta^{(k)})=&\sqrt{\frac{1}{m}\sum_{j=1}^m\|W_{h_{lj}}W_{h_{lj}}^T-W_{h_{kj}}W_{h_{kj}}^T\|_F^2} \\
\geq &\frac{1}{2}\min_{1\leq i\neq j\leq N}\|W_iW_i^T-W_jW_j^T\|_F \geq \frac{c_1\sqrt{K}\Delta}{2}=\frac{c_1\delta}{2c_2},
\end{align*}
where the first inequality is due to \eqref{code:book:set}.
\end{itemize}
Given the above results, we are ready to apply Lemma \ref{lemma: Fano} to conclude 
\begin{align}
\label{first:fano:type:br}
&\inf_{\{\hat{V}_i\}_{i\in [m]}\subseteq \mathbb{V}_{d,K}} \sup_{\mathbb{P}\in \mathcal{A}(
\delta,0,\sigma)}  \mathbb{P}\Bigg( \sqrt{\frac{1}{m}\sum_{i=1}^m\|\hat{V}_i\hat{V}_i^T-\vt{i}\|_F^2}\geq \frac{c_1\delta}{4c_2}\Bigg) \nonumber \\
\geq&1-\frac{\log 2+mdK/40}{\log M}\geq 1-\frac{\log 2+mdK/40}{mdK/12}\geq 0.59,
\end{align}
given $\delta \leq \sqrt{\frac{(\sigma+1)dK}{10n\sigma^2}}, m\geq 8, N\geq 4e+1, dK\geq 10$. Note that $dK\geq 10$ implies $N\geq 4e+1$ since $\log N\geq dK/3$. Moreover, with exactly the same set of parameter values in \eqref{both:case:suit} and the choice of $\Delta=\sqrt{\frac{(\sigma+1)d}{10c_2^2nm\sigma^2}}$ from the proof of Theorem \ref{theorem:minimax1}, it is straightforward to verify that the minimax lower bound \eqref{both:case:suit:bound} continues to hold here,
\begin{align}
\label{second:fano:type:br}
\inf_{\{\hat{V}_i\}_{i\in [m]}\subseteq \mathbb{V}_{d,K}}  \sup_{\mathbb{P}\in \mathcal{A}(\delta,0,\sigma)} \mathbb{P}\Bigg( \sqrt{\frac{1}{m}\sum_{i=1}^m\|\hat{V}_1\hat{V}_i^T-\vt{i}\|_F^2}\geq  \sqrt{\frac{c_1^2(\sigma+1)Kd}{40c_2^2nm\sigma^2}}\Bigg) \geq 0.7. 
\end{align}
Combining \eqref{first:fano:type:br}-\eqref{second:fano:type:br} yields
\[
\inf_{\{\hat{V}_i\}_{i\in [m]}\subseteq \mathbb{V}_{d,K}}  \sup_{\mathbb{P}\in \mathcal{A}(\delta,0,\sigma)} \mathbb{P}\Bigg( \sqrt{\frac{1}{m}\sum_{i=1}^m\|\hat{V}_1\hat{V}_i^T-\vt{i}\|_F^2}\geq  \frac{c_1\delta}{8c_2}+\sqrt{\frac{c_1^2(\sigma+1)Kd}{160c_2^2nm\sigma^2}}\Bigg) \geq 0.59,
\]
given $\delta \leq \sqrt{\frac{(\sigma+1)dK}{10n\sigma^2}}, m\geq 8$. When $\delta >\sqrt{\frac{(\sigma+1)dK}{10n\sigma^2}}:=\bar{\delta}$, since $\mathcal{A}(\bar{\delta},0,\sigma)\subseteq \mathcal{A}(\delta,0,\sigma)$, the minimax lower bound under $\mathcal{A}(\delta,0,\sigma)$ is larger than that under $\mathcal{A}(\bar{\delta},0,\sigma)$. We can thus conclude
\begin{align}
\label{final:zeroep:bound}
\inf_{\{\hat{V}_i\}_{i\in [m]}\subseteq \mathbb{V}_{d,K}}  \sup_{\mathbb{P}\in \mathcal{A}(\delta,0,\sigma)} \mathbb{P}\Bigg( \sqrt{\frac{1}{m}\sum_{i=1}^m\|\hat{V}_1\hat{V}_i^T-\vt{i}\|_F^2}\geq & \frac{c_1\delta}{8c_2} \wedge \sqrt{\frac{c_1^2(\sigma+1)dK}{640c_2^2n\sigma^2}} \nonumber \\
&+\sqrt{\frac{c_1^2(\sigma+1)Kd}{160c_2^2nm\sigma^2}}\Bigg) \geq 0.59,
\end{align}
given $m\geq 8$. When $m<8$, \eqref{second:fano:type:br} directly implies \eqref{final:zeroep:bound} (after properly adjusting the absolute constants in \eqref{final:zeroep:bound}), hence \eqref{final:zeroep:bound} in fact holds for all $m\geq 1$.

We next obtain the minimax lower bound over the submodel $\mathcal{A}(0,\epsilon,\sigma)$. The arguments are similar to those in the preceding proof. Let $\tilde{\mathcal{H}}=\{\tilde{h}_{l}\}_{l\in \tilde{M}}\subseteq[N]^{\epsilon m}$\footnote{For simplicity, we treat $\epsilon m$ as an integer. The non-integer case can be handled in a similar way.} be the subset stated in Lemma \ref{pack:set:construct:ref}. Adopt the same metric \eqref{real:metric:def}, set $\Delta=\sqrt{\frac{(\sigma+1)d}{10c_2^2\sigma^2n}}$, and consider the set $\{\theta^{(l)}\}_{l\in[\tilde{M}]}$:
\begin{align*}
\theta^{(l)}=\{V^{(l)}_j(V^{(l)}_j)^T\}_{j\in [m]} &{\rm ~with~} V^{(l)}_j(V^{(l)}_j)^T=W_{\tilde{h}_{lj}}W_{\tilde{h}_{lj}}^T, j\in [\epsilon m], \\
&{\rm and~} V^{(l)}_j(V^{(l)}_j)^T=W_1W_1^T, j=\epsilon m+1,\ldots m.
\end{align*}
Like before, we can directly confirm the following:
\begin{itemize}
\item $\Delta\leq 1/2$ under the given condition $\frac{(\sigma+1)d}{\sigma^2n}\leq 5c_2^2/2$. So the choice of $\Delta$ satisfies the requirement of Lemma \ref{lemma:packing construction}.
\item Let $\Sc=\{\epsilon m+1,\epsilon m+2,\ldots, m\}$. Then, $\max_{l\in [\tilde{M}],j,\tilde{j}\in \Sc}\|V^{(l)}_j(V^{(l)}_j)^T-V^{(l)}_{\tilde{j}}(V^{(l)}_{\tilde{j}})^T\|_F=0$. Hence, the set $\{\theta^{(l)}\}_{l\in [\tilde{M}]}$ belongs to the parameter space of $\mathcal{A}(0,\epsilon,\sigma)$.
\item For all $l,k\in [\tilde{M}]$, it holds that 
\begin{align*}
\kl (\Pb_{\theta^{(l)}}|\Pb_{\theta^{(k)}})=&\sum_{j=1}^m\frac{n\sigma^2}{4(1+\sigma)}\|V^{(l)}_j(V^{(l)}_j)^T-V^{(k)}_j(V^{(k)}_j)^T\|_F^2 \\
\leq &\frac{\epsilon mn\sigma^2}{4(1+\sigma)}\max_{i,j\in [N]}\|W_iW_i^T-W_jW_j^T\|_F^2\leq \frac{n\epsilon m\sigma^2c_2^2K\Delta^2}{4(1+\sigma)} =\frac{\epsilon mK d}{40}.
\end{align*}

\item For all $1\leq l \neq k\leq \tilde{M}$, the following hold
\begin{align*}
\rho(\theta^{(l)},\theta^{(k)})=&\sqrt{\frac{1}{m}\sum_{j=1}^m\|W_{h_{lj}}W_{h_{lj}}^T-W_{h_{kj}}W_{h_{kj}}^T\|_F^2} \\
\geq &\frac{\sqrt{\epsilon}}{2}\min_{1\leq i\neq j\leq N}\|W_iW_i^T-W_jW_j^T\|_F \geq \sqrt{\frac{c_1^2\epsilon (\sigma+1)Kd}{40c_2^2\sigma^2 n}}.
\end{align*}
\end{itemize}
Therefore, Lemma \ref{lemma: Fano} gives
\begin{align*}
&\inf_{\{\hat{V}_i\}_{i\in [m]}\subseteq \mathbb{V}_{d,K}} \sup_{\mathbb{P}\in \mathcal{A}(
0,\epsilon,\sigma)}  \mathbb{P}\Bigg( \sqrt{\frac{1}{m}\sum_{i=1}^m\|\hat{V}_i\hat{V}_i^T-\vt{i}\|_F^2}\geq \sqrt{\frac{c_1^2\epsilon (\sigma+1)Kd}{160c_2^2\sigma^2 n}}\Bigg) \nonumber \\
\geq&1-\frac{\log 2+\epsilon mdK/40}{\epsilon mKd/12} \geq 0.59,
\end{align*}
given $\epsilon m\geq 8$. Combining the above bound with \eqref{final:zeroep:bound}, we obtain
\begin{align}
    \inf_{\{\hat{V}_i\}_{i=1}^m} \sup_{\Pb \in \mathcal{A}(\delta, \epsilon,\sigma)} \Pb \Bigg(&\frac{1}{m}\sum_{i=1}^m \|\hat{V}_i\hat{V}_i^T-\vt{i}\|_F^2 \nonumber \\
    &\geq C \lb\frac{(\sigma+1)Kd}{\sigma^2nm}+\delta^2\wedge\frac{(\sigma+1)Kd}{\sigma^2n}+\frac{(\sigma+1)\epsilon Kd}{\sigma^2n}\rb\Bigg)\geq 0.59, \label{final:aver:together}
\end{align}
where $C>0$ is an absolute constant, and $\epsilon m\geq 8$. When $\epsilon m < 8$, it is clear that \eqref{final:zeroep:bound} directly implies \eqref{final:aver:together} (by adjusting the constant $C$). The proof is thus completed. 
\end{proof}

\subsubsection{Technical lemmas}
The following lemma is often used to obtain minimax lower bounds, which can be found in standard textbooks (e.g., Proposition 15.12 in \cite{wainwright2019high}).
\begin{lemma}\label{lemma: Fano}
Consider a distribution class $\mathbb{P}=\{\Pb_{\theta}\mid \theta \in \Theta\}$ where $\theta$ is some parameter.
Let $\left\{\theta_1,\ldots,\theta_M\right\}$ be a $2\delta$-separated set in a pseudo-metric $\rho$ on $\Theta$.
Then  the minimax risk is lower bounded as 
\begin{equation*}
  \inf_{\hat{\theta}}\sup_{\Pb_{\theta}\in \mathbb{P}} \Pb_{\theta}(\rho(\hat{\theta},\theta)\geq \delta)\geq 1-\frac{\frac{1}{M^2}\sum_{i,j=1}^M \kl(\Pb_{\theta_i}|\Pb_{\theta_j})+\log 2}{\log M},
\end{equation*}
where $\kl(\Pb_{\theta_i}|\Pb_{\theta_j})$ denotes the Kullback–Leibler divergence between $\Pb_{\theta_i}$ and $\Pb_{\theta_j}$.
   
\end{lemma}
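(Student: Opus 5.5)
The plan is the standard reduction from estimation to multiple hypothesis testing, followed by Fano's inequality and a convexity bound on the mutual information.

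\emph{Step 1 (reduction to testing).} Fix any estimator $\hat{\theta}$ and define the test $\psi=\arg\min_{j\in[M]}\rho(\hat{\theta},\theta_j)$, breaking ties arbitrarily. Suppose $\rho(\hat{\theta},\theta_J)<\delta$ for the true index $J$. Then for every $k\neq J$,
\[
\rho(\hat{\theta},\theta_k)\geq \rho(\theta_J,\theta_k)-\rho(\hat{\theta},\theta_J)>2\delta-\delta=\delta>\rho(\hat{\theta},\theta_J),
\]
by the triangle inequality for the pseudo-metric and the $2\delta$-separation. Hence $\psi=J$ on this event, that is, $\{\psi\neq J\}\subseteq\{\rho(\hat{\theta},\theta_J)\geq\delta\}$. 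Bounding the supremum over $\Theta$ by the average over the $M$ hypotheses gives
\[
\sup_{\theta}\Pb_\theta(\rho(\hat\theta,\theta)\geq\delta)\geq \frac1M\sum_{j=1}^M\Pb_{\theta_j}(\psi\neq j)=\Pb(\psi\neq J),
\]
where $J$ is uniform on $[M]$ and, conditionally on $J=j$, the data $X$ are drawn from $\Pb_{\theta_j}$.

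\emph{Step 2 (Fano's inequality).} This is the main step. Write $p_e=\Pb(\psi\neq J)$. Fano's inequality gives $H(J\mid\psi)\leq h(p_e)+p_e\log(M-1)\leq \log 2+p_e\log M$, where $h$ is the binary entropy. Since $J\to X\to\psi$ is a Markov chain, the data processing inequality gives $I(J;\psi)\leq I(J;X)$. Combining these with $H(J)=\log M$ yields
\[
\log M-I(J;X)\leq H(J\mid\psi)\leq\log2+p_e\log M,
\]
and therefore
\[
p_e\geq 1-\frac{I(J;X)+\log 2}{\log M}.
\]

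\emph{Step 3 (bounding the mutual information).} Let $\bar{\Pb}=\frac1M\sum_j\Pb_{\theta_j}$ be the mixture. Then $I(J;X)=\frac1M\sum_i \kl(\Pb_{\theta_i}|\bar{\Pb})$. The KL divergence is convex in its second argument, so $\kl(\Pb_{\theta_i}|\bar{\Pb})\leq\frac1M\sum_j \kl(\Pb_{\theta_i}|\Pb_{\theta_j})$, and hence
\[
I(J;X)\leq \frac{1}{M^2}\sum_{i,j=1}^M \kl(\Pb_{\theta_i}|\Pb_{\theta_j}).
\]

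Substituting this into Step 2 and taking the infimum over $\hat\theta$ in Step 1 yields the stated bound. The only genuinely nontrivial ingredient is the entropy form of Fano's inequality in Step 2. I would cite it from a standard information-theory text, or prove it directly by expanding $H(\mathbb{I}\{\psi\neq J\},J\mid\psi)$ via the chain rule in two ways.
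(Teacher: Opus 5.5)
Your proof is correct. It is the standard argument: you reduce to an $M$-ary test via the nearest-point rule, apply Fano's inequality together with data processing, and bound $I(J;X)\le M^{-2}\sum_{i,j}\kl(\Pb_{\theta_i}|\Pb_{\theta_j})$ by convexity of the KL divergence in its second argument. This is exactly the proof behind Proposition 15.12 of Wainwright, which the paper cites in place of giving its own proof.
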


\begin{lemma}\label{lemma:packing construction}
Assume $1\leq K \leq d/3$. For any given $\Delta\in (0,1/2]$, there exists a subset $\{W_i\}_{i\in [N]}  \subseteq \mathbb{V}_{d, K}$ satisfying the following properties:
\begin{itemize}
    \item[1.] $\log N\geq Kd/3$,
    \item[2.] For $1\leq i \neq j\leq N$, $c_1\sqrt{K}\Delta\leq \|W_iW_i^T-W_jW_j^T\|_F\leq c_2\sqrt{K}\Delta$,
\end{itemize}
where $c_1,c_2>0$ are some absolute constants.
\end{lemma}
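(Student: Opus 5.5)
We need to prove Lemma \ref{lemma:packing construction}: a packing of the Grassmannian in Frobenius projector distance with $\log N \geq Kd/3$ and all pairwise distances between $c_1\sqrt{K}\Delta$ and $c_2\sqrt{K}\Delta$.

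\textbf{Construction.} The plan is a local packing around a fixed base point $W_0=[I_K;0]\in\mathbb{V}_{d,K}$. Each $W_i$ is a perturbation in directions orthogonal to $W_0$. Take a packing $\{B_i\}_{i\in[N]}$ of matrices in $\mathbb{R}^{(d-K)\times K}$ lying in the Frobenius sphere of radius $\sqrt{K}$ (equivalently, via a Gilbert--Varshamov code, $B_i\in\{\pm 1/\sqrt{d-K}\}^{(d-K)\times K}$ rescaled) with pairwise distances $\|B_i-B_j\|_F\geq \sqrt{K}/2$, say. Set
\[
W_i=\begin{bmatrix} I_K \\ \Delta B_i\end{bmatrix}\big(I_K+\Delta^2B_i^TB_i\big)^{-1/2},
\]
so that $W_iW_i^T$ is the projector onto $\mathrm{Col}([I_K;\Delta B_i])$. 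For the count, use a Varshamov--Gilbert bound on $\{\pm1\}^{(d-K)K}$: this gives $\log N\geq c(d-K)K$ codewords with Hamming separation $\geq (d-K)K/8$. With $K\leq d/3$ we have $(d-K)K\geq 2dK/3$. Getting exactly $Kd/3$ needs care with constants; if Varshamov--Gilbert is too lossy, I would instead use the sphere-packing estimate for the Frobenius sphere in dimension $(d-K)K$ at a fixed small relative separation, whose log-cardinality is $(d-K)K\log(1/c)$, which comfortably exceeds $Kd/3$ for a suitable constant. Alternatively, I would cite the metric-entropy results of Szarek / Cai--Ma--Wu on Grassmannians, which give exactly such local packings.

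\textbf{Distance bounds.} For $\Delta\leq1/2$ and $\|B_i\|\leq 1$ (which holds if we restrict to $\|B_i\|\leq 1$, e.g.\ with a high-probability random construction, or by using the operator norm bound $\sqrt{K/(d-K)}\cdot$const for sign matrices), the projector map $B\mapsto P(B)$ onto $\mathrm{Col}([I;\Delta B])$ is bi-Lipschitz near $0$. Its differential at $0$ is
\[
E\mapsto \Delta\begin{bmatrix}0&E^T\\ E&0\end{bmatrix},
\]
whose Frobenius norm is $\sqrt2\,\Delta\|E\|_F$. The remainder is $O(\Delta^2\|B\|^2)$-relative, which is controlled since $\Delta\leq 1/2$. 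This gives
\[
c\Delta\|B_i-B_j\|_F\leq \|W_iW_i^T-W_jW_j^T\|_F\leq C\Delta\|B_i-B_j\|_F.
\]
Combined with $\sqrt{K}/2\leq\|B_i-B_j\|_F\leq 2\sqrt K$, this yields property 2.

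\textbf{Main obstacle.} The delicate step is making the lower Lipschitz constant uniform up to $\Delta=1/2$. To handle it, I would compute directly. Write $P_i=W_iW_i^T$ and use $\|P_i-P_j\|_F^2=2K-2\|W_i^TW_j\|_F^2$. Then bound $\|W_i^TW_j\|_F^2$ through
\[
W_i^TW_j=(I+\Delta^2B_i^TB_i)^{-1/2}(I+\Delta^2B_i^TB_j)(I+\Delta^2B_j^TB_j)^{-1/2}.
\]
This identity gives both the upper and the lower bound, with explicit constants, once $\|B_i\|\leq1$.
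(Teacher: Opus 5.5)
Your local-chart idea is sound, and the distance step can be made exact. With $X=\Delta B_i$, $Y=\Delta B_j$, the matrix $U_\perp=[-X^T;I](I+XX^T)^{-1/2}$ has orthonormal columns spanning the complement of $\mathrm{Col}([I;X])$, and
\[
\|W_iW_i^T-W_jW_j^T\|_F=\sqrt{2}\,\big\|(I+XX^T)^{-1/2}(Y-X)(I+Y^TY)^{-1/2}\big\|_F .
\]
This gives the upper bound $\sqrt2\Delta\|B_i-B_j\|_F$ with no conditions. The lower constant, however, is only $\sqrt2/\sqrt{(1+\|X\|^2)(1+\|Y\|^2)}$, so a uniform operator-norm bound on the $B_i$ is indispensable. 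For example, $B_i=\sqrt K e_1u_i^T$ and $B_j=\sqrt K e_1u_j^T$ with $u_i\perp u_j$ lie on your Frobenius sphere and have $\Delta\|B_i-B_j\|_F=\Delta\sqrt{2K}$, yet their projector distance is at most $2$.

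\textbf{The gap.} None of your candidate families delivers $\|B_i\|\le 1$ together with $\log N\ge Kd/3$ and $\|B_i-B_j\|_F\ge c\sqrt K$.
\begin{itemize}
\item A matrix with entries $\pm1/\sqrt{d-K}$ can have $\|B\|=\sqrt K$ (take all signs equal). The bound you quote holds for typical sign matrices, not all, and a Gilbert--Varshamov code gives no control over which ones you get.
\item Packings of the Frobenius sphere have the same defect.
\item A random construction cannot be handled by a union bound over $N=e^{Kd/3}$ points. A single Gaussian matrix violates $\|B\|\le1$ with probability only exponentially small in $d-K$ (one column already does), not in $Kd$. You would need a discarding or volume argument for $\{B:\|B\|\le 1\}$, which the proposal does not give.
\item Separately, Gilbert--Varshamov at relative Hamming separation $1/8$ guarantees only $\log N\approx 0.32(d-K)K$. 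This falls below $Kd/3$ when $K$ is near $d/3$, though a smaller relative separation fixes it.
\end{itemize}

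\textbf{The fix, which is the paper's route.} Take the $B_i$ with orthonormal columns: choose $\{J_i\}_{i\in[N]}\subseteq\mathbb{V}_{d-K,K}$ with $\|J_iJ_i^T-J_jJ_j^T\|_F\ge\tilde c_1\sqrt K$ and $\log N\ge Kd/3$ (Pajor; Lemma~6 of Vu and Lei). Then $\|J_i\|=1$ and $\|J_i\|_F=\sqrt K$ hold automatically, and your normalizing factor $(I+\Delta^2J_i^TJ_i)^{-1/2}$ becomes a scalar. Up to reparametrizing $\Delta$, your $W_i$ is $W_i=[(1-\Delta^2)^{1/2}I_K;\Delta J_i]$, so $W_i^TW_j=(1-\Delta^2)I_K+\Delta^2J_i^TJ_j$. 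Expanding $2K-2\|W_i^TW_j\|_F^2$ gives
\[
\|W_iW_i^T-W_jW_j^T\|_F^2=2\Delta^2(1-\Delta^2)\|J_i-J_j\|_F^2+\Delta^4\|J_iJ_i^T-J_jJ_j^T\|_F^2 .
\]
Hence $\sqrt2\Delta\sqrt{1-\Delta^2}\|J_i-J_j\|_F\le\|W_iW_i^T-W_jW_j^T\|_F\le\sqrt2\Delta\|J_i-J_j\|_F\le 2\sqrt{2K}\Delta$; the paper obtains this from Lemma~3 of Vu and Lei. The separation then transfers via $\|J_i-J_j\|_F\ge\|J_iJ_i^T-J_jJ_j^T\|_F/\sqrt2$.

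Your third fallback, citing a local packing of the Grassmannian directly (Szarek, Cai--Ma--Wu), would also work. But it would prove the lemma outright and make your chart unnecessary; as written, the proposal's own construction has the gap above.
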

\begin{proof}

According to Lemma 6 from \cite{vu2013minimax} (see also Proposition 8 in \cite{pajor1998metric}) under the condition $1\leq K \leq d/3$, there exists a subset $\{J_i\}_{i\in [N]}\subseteq \mathbb{V}_{d-K, K}$ satisfying
\begin{align}
&\log N \geq Kd/3, \label{entropy:bound:one}\\
&\|J_iJ_i^T-J_jJ_j^T\|_F\geq \tilde{c}_1\sqrt{K} {\rm~~ for~all~} i\neq j,
\end{align}
where $\tilde{c}_1>0$ is an absolute constant. Define 
\begin{equation*}
W_i
=
\left[
\begin{array}{cc}
(1 - \Delta^2)^{1/2} I_K  \\
\Delta J_i 
\end{array}
\right]\in \mathbb{R}^{d\times K}, \quad i\in [N].
\end{equation*}
It is straightforward to confirm that $\{W_i\}_{i\in [N]}\subseteq \mathbb{V}_{d, K}$. Moreover, we apply Lemma 3 in \cite{vu2013minimax} to obtain 
\begin{align}
\label{convert:form:one}
\sqrt{2}\Delta \sqrt{1-\Delta^2}\|J_i-J_j\|_F &\leq \|W_iW_i^T-W_jW_j^T\|_F\leq \sqrt{2}\Delta \|J_i-J_j\|_F \nonumber \\
&\leq \sqrt{2}\Delta(\|J_i\|_F+\|J_j\|_F)\leq 2\sqrt{2}\sqrt{K}\Delta,
\end{align}
and Proposition 2.2 in \cite{vu2013minimax} to have
\begin{align}
\label{convert:form:two}
\sqrt{2}\Delta \sqrt{1-\Delta^2}\|J_i-J_j\|_F\geq \sqrt{3/2}\Delta \|J_i-J_j\|_F\geq \sqrt{3}/2\Delta\|J_iJ_i^T-J_jJ_j^T\|_F.
\end{align}
Combining \eqref{entropy:bound:one}-\eqref{convert:form:two} completes the proof. 
\end{proof}

\begin{lemma}
\label{pack:set:construct:ref}
Consider integers $N, m$ with $N\geq 4e+1$ and $m\geq 8$. Then there exists a subset $\mathcal{H}\subseteq [N]^m$ satisfying the following properties:
\begin{itemize}
\item[(i)] $|\mathcal{H}|\geq N^{m/4}$,
\item[(ii)] $\sum_{i=1}^m\mathbb{I}(h_i\neq \tilde{h}_i)\geq \lceil m/4 \rceil $ for any $h\neq \tilde{h}\in \mathcal{H}$.
\end{itemize}
\end{lemma}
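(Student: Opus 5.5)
We will prove Lemma \ref{pack:set:construct:ref} with a Gilbert--Varshamov style greedy (maximal packing) argument in the Hamming metric on $[N]^m$. Let $r=\lceil m/4\rceil-1$. Choose $\mathcal{H}\subseteq[N]^m$ maximal among subsets whose elements have pairwise Hamming distance at least $r+1=\lceil m/4\rceil$. Property (ii) then holds by construction.

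By maximality, the Hamming balls of radius $r$ centered at the points of $\mathcal{H}$ cover $[N]^m$. Hence
\[
|\mathcal{H}|\geq \frac{N^m}{V(r)}, \qquad V(r)=\sum_{k=0}^{r}\binom{m}{k}(N-1)^k .
\]
It therefore suffices to show $V(r)\leq N^{3m/4}$.

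We bound $V(r)$ crudely. Since $r\leq m/4$ and $m\geq 8$, we have $r\leq m/2$, so the terms $\binom{m}{k}(N-1)^k$ are essentially increasing in $k$. We use
\[
\sum_{k\le r}\binom{m}{k}\le \Big(\frac{em}{r}\Big)^r, \qquad \text{so} \qquad V(r)\leq \Big(\frac{em}{r}\Big)^r N^r .
\]
If $r=0$ (only possible when $m<4$, excluded here), $V(0)=1$ and the claim is trivial. Otherwise $r\geq 1$. The function $r\mapsto (em/r)^r$ is increasing on $r\le m$, and $r\le m/4$, so
\[
V(r)\le (4e)^{m/4}N^{m/4}.
\]
Since $N\geq 4e+1>4e$, this gives
\[
V(r)\le N^{m/4}N^{m/4}=N^{m/2}\le N^{3m/4}.
\]
Consequently $|\mathcal{H}|\geq N^{m/2}\geq N^{m/4}$, with room to spare.

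The only delicate point is bookkeeping at the boundary: the radius is $\lceil m/4\rceil-1<m/4$, and we need the bound $\sum_{k\le r}\binom{m}{k}\le (em/r)^r$, which holds for $1\le r\le m$. The hypotheses $N\ge 4e+1$ and $m\ge 8$ are exactly what make these estimates clean. In particular, $m\ge 8$ guarantees $r\ge 1$, so the $(em/r)^r$ bound is well defined. No earlier result of the paper is needed; the argument is self-contained.
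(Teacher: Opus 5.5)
Your proof is correct and follows the paper's route: the Gilbert--Varshamov bound, which you justify directly via a maximal packing where the paper simply cites it, followed by an elementary estimate of the Hamming-ball volume. The only difference is in that estimate: you use $\sum_{k\le r}\binom{m}{k}\le (em/r)^r$ and the monotonicity of $r\mapsto (em/r)^r$ to get $V(r)\le (4eN)^{m/4}\le N^{m/2}$, while the paper bounds term by term and applies the binomial theorem to get $[4e(N-1)+1]^{\lceil m/4\rceil}\le N^{3m/4}$.
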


\begin{proof}
The Gilbert-Varshamov bound states that there exists a subset $\mathcal{H}\subseteq [N]^m$ such that Part (ii) is satisfied and 
\begin{align}
\label{gv:explicit}
|\mathcal{H}|\geq \frac{N^m}{\sum_{j=0}^{\lceil m/4 \rceil -1}{m\choose j}(N-1)^j}.
\end{align}
We aim to simplify the above lower bound. Based on the binomial coefficient bounds $\big(\frac{n}{k}\big)^k\leq {n \choose k}\leq \big(\frac{en}{k}\big)^k$ for $1\leq k\leq n$, we obtain the following bounds
\begin{align*}
\sum_{j=0}^{\lceil m/4 \rceil -1}{m\choose j}(N-1)^j&\leq 1+\sum_{j=1}^{\lceil m/4 \rceil}\Big(\frac{em}{j}\Big)^j(N-1)^j\leq 1+\sum_{j=1}^{\lceil m/4 \rceil}\Big(\frac{\lceil m/4 \rceil }{j}\Big)^j[4e(N-1)\big]^j \\
&\leq 1+\sum_{j=1}^{\lceil m/4 \rceil}{\lceil m/4 \rceil \choose j}[4e(N-1)\big]^j=[4e(N-1)+1\big]^{\lceil m/4 \rceil} \leq N^{3m/4},
\end{align*}
where the last inequality is by the conditions $N\geq 4e+1$ and $m\geq 8$. The above result combined with \eqref{gv:explicit} completes the proof.
\end{proof}

\subsection{Proof of Theorem \ref{theorem:depth space}}
\label{proof:depth:method:bound}

Recall the depth function defined in \eqref{new:depth:def}. We define a few related quantities in the following:
\begin{align}
   \mathcal{D}\left(\Gamma, \{X_i\}_{i\in S} \right) =
\inf_{\|u\|_2=1} \min &\Bigg\{
\frac{1}{|S|} \sum_{i\in S} \frac{1}{B_i}\sum_{j=1}^{B_i}\mathbb{I} \Big\{ \frac{u^T(X_i^{(j)})^TX_i^{(j)}u}{n_{ij}}\leq u^{T} \Gamma u \Big\}, \;  \nonumber \\
&
~~~\frac{1}{|S|} \sum_{i\in S} \frac{1}{B_i}\sum_{j=1}^{B_i}\mathbb{I} \Big\{ \frac{u^T(X_i^{(j)})^TX_i^{(j)}u}{n_{ij}}> u^{T} \Gamma u \Big\}
\Bigg\},  \label{fn:s:def}\\
   \mathcal{D}\left(\Gamma, \{\mathbb{P}_i\}_{i\in S} \right) =
\inf_{\|u\|_2=1} \min &\Bigg\{
\frac{1}{|S|} \sum_{i\in S} \frac{1}{B_i}\sum_{j=1}^{B_i}\mathbb{P} \Big\{ \frac{u^T(X_i^{(j)})^TX_i^{(j)}u}{n_{ij}}\leq u^{T} \Gamma u \Big\}, \;  \nonumber  \\
&
~~~\frac{1}{|S|} \sum_{i\in S} \frac{1}{B_i}\sum_{j=1}^{B_i}\mathbb{P} \Big\{ \frac{u^T(X_i^{(j)})^TX_i^{(j)}u}{n_{ij}}> u^{T} \Gamma u \Big\}
\Bigg\},  \label{f:pop:def} \\
   \tilde{\mathcal{D}}\left(\Gamma, \{\mathbb{P}_i\}_{i\in S} \right) =
\inf_{\|u\|_2=1} \min &\Bigg\{
\frac{1}{|S|} \sum_{i\in S} \frac{1}{B_i}\sum_{j=1}^{B_i}\mathbb{P} \Big\{ \frac{u^T(X_i^{(j)})^TX_i^{(j)}u}{n_{ij}u^T\Sigma_iu}\leq \frac{u^{T} \Gamma u}{u^T\Sigma u} \Big\}, \;  \nonumber \\
&
~~~\frac{1}{|S|} \sum_{i\in S} \frac{1}{B_i}\sum_{j=1}^{B_i}\mathbb{P} \Big\{ \frac{u^T(X_i^{(j)})^TX_i^{(j)}u}{n_{ij}u^T\Sigma u}> \frac{u^{T} \Gamma u}{u^T\Sigma u} \Big\} \label{g:pop:def}
\Bigg\},
\end{align}
where $\Sigma \succ 0$ is any given positive definite matrix. Since the depth-based estimator $\hat{\Gamma}$ in \eqref{cov:center} plays an important role in the depth-based multi-task learning procedure, we first prove an error bound for $\hat{\Gamma}$ below.

\subsubsection{A general bound for $\hat{\Gamma}$}

Introduce the notation:
\[
\underline{n}=\min_{i\in S}n_i,~\bar{n}=\max_{i\in S}n_i, ~\underline{B}=\min_{i\in S}B_i,~\bar{B}=\max_{i\in S}B_i.
\]

\begin{theorem}\label{theorem:depth}
Let Assumption \ref{assumption:data simi} hold. Additionally, we assume $\epsilon\leq \frac{1}{4}$, $\frac{\underline{n}}{\bar{B}}\geq C_1$ for sufficiently large constant $C_1$, and there exists $\Sigma\succ 0$ such that 
\begin{align}
\sqrt{\frac{\bar{n}}{\underline{B}}}\max_{i\in S}\frac{\|\Sigma-\Sigma_i\|}{\sigma_{min}(\Sigma_i)\wedge \sigma_{min}(\Sigma)}+\sqrt{\frac{d+t}{|S|\underline{B}}}\leq C_2, \label{con:small:gap}
\end{align}
for some sufficiently small constant $C_2>0$.
\begin{itemize}
\item[(i)] If the rows of $X_i$ are Gaussian for all $i\in S$, then it holds with probability at least $1-\exp(-t)$ that
\begin{align*}
\|\hat{\Gamma}-\beta\Sigma\|\leq C_3e^{C_4(\frac{\bar{n}\bar{B}}{\underline{n}\underline{B}})^{\frac{3}{2}}} \|\Sigma\|\cdot\Bigg[\max_{i\in S}\frac{\|\Sigma-\Sigma_i\|}{\sigma_{min}(\Sigma_i)\wedge \sigma_{min}(\Sigma)}+\sqrt{\frac{\underline{B}}{\bar{n}}}\epsilon+\sqrt{\frac{d+t}{|S|\bar{n}}}\Bigg],
\end{align*}
where $\beta \in [\frac{1}{2},\frac{3}{2}]$ depends on the sample size $\{n_i\}_{i\in S}$, and $C_3,C_4>0$ are absolute constants.
\item[(ii)] If the rows of $X_i$ are elliptical with finite sixth moments for all $i\in S$, then it holds with probability at least $1-\exp(-t)$ that
\begin{align*}
\|\hat{\Gamma}-\Sigma\|\leq C_5\|\Sigma\|\cdot\Bigg[\sqrt{\frac{\bar{n}\bar{B}}{\underline{n}\underline{B}}}\max_{i\in S}\frac{\|\Sigma-\Sigma_i\|}{\sigma_{min}(\Sigma_i)\wedge \sigma_{min}(\Sigma)}+\sqrt{\frac{\bar{B}}{\underline{n}}}\epsilon+\frac{\bar{B}}{\underline{n}}+\sqrt{\frac{(d+t)\bar{B}}{|S|\underline{n}\underline{B}}}\Bigg],
\end{align*}
where $C_5>0$ depends on the six moments of the elliptical distributions. 
\end{itemize}
\end{theorem}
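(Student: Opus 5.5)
This follows the Tukey/matrix-depth strategy of \cite{chen2018robust}, adapted to group-level sample covariances and to heterogeneous $\Sigma_i$. For each $u$, write the population depth of $\Gamma$ as an average over $i\in S$ and $j\in[B_i]$ of probabilities that $u^T\hat\Sigma_{ij}u/u^T\Sigma_iu\le u^T\Gamma u/u^T\Sigma_iu$. Here $\hat\Sigma_{ij}=(X_i^{(j)})^TX_i^{(j)}/n_{ij}$. By ellipticity (and Gaussianity in part (i)), the law of $u^T\hat\Sigma_{ij}u/u^T\Sigma_iu$ does not depend on $u$ or on $\Sigma_i$; it depends only on $n_{ij}$ and the radial law. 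Call its distribution function $F_{n_{ij}}$. For a fixed reference $\Sigma$, the heterogeneity enters only through the ratio $u^T\Sigma u/u^T\Sigma_iu=1+O\big(\|\Sigma-\Sigma_i\|/(\sigma_{\min}(\Sigma_i)\wedge\sigma_{\min}(\Sigma))\big)$.

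The proof has four steps.

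\textbf{Step 1 (population).} Take $\beta$ to be the median of the mixture $\bar F=\frac{1}{|S|}\sum_i\frac1{B_i}\sum_jF_{n_{ij}}$, so that $\beta\Sigma$ has population depth $\tilde{\mathcal D}$ close to $1/2$. Since each $F_{n_{ij}}$ concentrates around $1$ at scale $\sqrt{B/n}$ and $n/B\ge C_1$, we get $\beta\in[1/2,3/2]$. In part (ii) the target is $\Sigma$ itself: one shows $|\beta-1|\lesssim B/n$ via a sixth-moment expansion of the median of a normalized quadratic form, which gives the extra $\bar B/\underline n$ term.

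\textbf{Step 2 (uniform concentration).} Bound
\[
\sup_{\Gamma}\big|\mathcal D(\Gamma,\{X_i\}_{i\in S})-\mathcal D(\Gamma,\{\mathbb P_i\}_{i\in S})\big|\lesssim\sqrt{\frac{d+t}{|S|\underline B}}
\]
with probability $1-e^{-t}$. The indicator class $\{\hat\Sigma\mapsto\mathbb I(u^T\hat\Sigma u\le u^T\Gamma u)\}$ has VC dimension $O(d)$ in $(u,\Gamma)$, the summands are independent across $(i,j)$, and there are about $|S|\underline B$ of them; a weighted VC/bounded-differences argument then gives the bound. Outlier tasks shift the full depth by at most $\epsilon/(1-\epsilon)$. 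Combining these, $\hat\Gamma$ has population depth at least $\tfrac12-O(\epsilon+\sqrt{(d+t)/(|S|\underline B)}+\text{heterogeneity slack})$.

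\textbf{Step 3 (inversion).} If $u^T\hat\Gamma u/u^T\Sigma u=\beta(1+\eta)$ in some direction $u$, then that direction's term is $\bar F(\beta(1+\eta)\cdot\text{ratio})$, which is far from $1/2$. A lower bound on the density of $\bar F$ near $\beta$, of order $\sqrt{n/B}$, converts a depth deficit $\Delta$ into $|\eta|\lesssim\sqrt{B/n}\,\Delta$. The heterogeneity ratio contributes additively, giving the term $\max_i\|\Sigma-\Sigma_i\|/(\cdots)$.

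In the Gaussian case $F_n$ is a scaled $\chi^2_n/n$. Its density lower bound on a window of width $\asymp\sqrt{\underline B/\bar n}$ deteriorates when the group sizes differ, since the components are centered at different scales. This is the source of the factor $e^{C(\bar n\bar B/\underline n\underline B)^{3/2}}$, which I would obtain from a crude Gaussian-tail comparison of the densities. Condition \eqref{con:small:gap} is exactly what ensures the perturbation stays inside this window. Taking the sup over $u$ converts the ratio bound to $\|\hat\Gamma-\beta\Sigma\|\lesssim\|\Sigma\|\sup_u|\eta_u|$.

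\textbf{Step 4 (elliptical case).} Replace the $\chi^2$ computations by a Berry--Esseen bound for the normalized quadratic form $u^T\hat\Sigma_{ij}u$ using sixth moments. This yields the density lower bound with absolute constants; the $\sqrt{\bar n\bar B/(\underline n\underline B)}$ factors come from the worst-case density scale.

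\textbf{Main obstacle.} Step 3 is the hard part: obtaining a uniform two-sided density lower bound for the mixture $\bar F$ around its median, with explicit dependence on the imbalance $\bar n\bar B/(\underline n\underline B)$, while simultaneously absorbing the $u$-dependent heterogeneity ratio. I would first handle equal group sizes and then treat the imbalance by comparing with the extreme components.
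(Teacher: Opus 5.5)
Your skeleton matches the paper's proof. You normalize the population depth by a reference $\Sigma$ and take $\beta$ as the median of the mixture of $\chi^2(N_i)/N_i$ laws. You split the deficit into three terms: a heterogeneity slack (mean value theorem with $\sup_x xf_N(x)\lesssim\sqrt N$, giving $\sqrt{\bar n/\underline B}\max_i\|\Sigma-\Sigma_i\|/(\sigma_{\min}(\Sigma_i)\wedge\sigma_{\min}(\Sigma))$), a VC/bounded-differences term $\sqrt{(d+t)/(|S|\underline B)}$, and the contamination term $\epsilon/(1-\epsilon)$. You then invert.

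For part (i) this goes through, with one correction. The a priori localization from ``deficit $\le 5/12$'' plus $\chi^2$ concentration only confines $u^T\hat\Gamma u/u^T\Sigma u$ to a window of width $\asymp\sqrt{\bar B/\underline n}$, not $\sqrt{\underline B/\bar n}$. That width is set by the \emph{smallest} groups. The exponential factor comes from lower-bounding the densities of the \emph{largest} groups at $\kappa\asymp\sqrt{\bar n\bar B/(\underline n\underline B)}$ of their own standard deviations from $1$. Your Gaussian-type bound $f_N(x)\gtrsim\sqrt N e^{-N(x-1)^2/2}$ on $[1/2,3/2]$ is valid because $\log(1+s)-s\ge -s^2$ for $s\ge -1/2$, and it even yields the better factor $e^{C\bar n\bar B/(\underline n\underline B)}$.

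The step that fails as written is Step 4. Berry--Esseen controls Kolmogorov distance, not densities, so it gives no density lower bound for $\frac{1}{N_i}\sum_j z_{ij}^2$, where $z_{ij}=u^TX_{i,j}/\sqrt{u^T\Sigma_iu}$. A density lower bound would need a local limit theorem with extra smoothness, which your hypotheses do not give.

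The repair is the paper's device. Use Berry--Esseen to replace the population depth by a Gaussian surrogate $\bar{\mathcal D}(\Gamma)$ built from $\Phi\big(\sqrt{N_i/\sigma_i^2}\,(u^T\Gamma u/u^T\Sigma u-1)\big)$. Here $\sigma_i^2=\mathrm{Var}(z_{ij}^2)$ and $\tau_i=\mathbb{E}|z_{ij}^2-1|^3/\sigma_i^3$; $\tau_i$ is where the sixth moments enter. The surrogate's deepest point is exactly $\Sigma$, with depth $1/2$, and the price is an additive depth error $\lesssim\max_i\tau_i\sqrt{\bar B/\underline n}$.

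Then invert through $\Phi$. All surrogate components are centred at $1$, so each increment $\Phi(\sqrt{N_i}\,|x-1|/\sigma_i)-\tfrac12$ dominates $\Phi\big(\sqrt{\underline n/(\bar B\max_i\sigma_i^2)}\,|x-1|\big)-\tfrac12$. This gives $|x-1|\lesssim\sqrt{\bar B\max_i\sigma_i^2/\underline n}\times(\text{deficit})$, with no exponential loss.

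Consequently the $\bar B/\underline n$ term is the Berry--Esseen error times the inversion factor $\sqrt{\bar B/\underline n}$, not only $|\beta-1|$. On this route you do not need the bound on $\beta$ for part (ii) at all. If you keep $\beta\Sigma$ as the target, you are forced into the same move anyway. You can lower-bound CDF increments of the true mixture only up to the additive Kolmogorov error, and that reproduces the same term.
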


\begin{proof}
For notational simplicity, we assume $N_i:=\frac{n_i}{B_i}$ is an integer for all $i\in S$ throughout the proof. The non-integer case can be handled in a similar way. We use $c_i,i=1,2,\ldots$ to denote absolute constants.

Proof of Part (i). Let $F_{N_i}(\cdot)$ denote the CDF of $\frac{1}{N_i}\chi^2(N_i)$, and $\beta$ satisfy 
\begin{align}
\label{gauss:beta:def}
\frac{1}{|S|}\sum_{i\in S}F_{N_i}(\beta)=\frac{1}{2}.
\end{align}
It is straightforward to verify the following for $\tilde{\mathcal{D}}\left(\Gamma, \{\mathbb{P}_i\}_{i\in S} \right) $ in \eqref{g:pop:def}:
\begin{align}
 \tilde{\mathcal{D}}\left(\Gamma, \{\mathbb{P}_i\}_{i\in S} \right) &=\inf_{\|u\|_2=1}\min\Bigg\{\frac{1}{|S|}\sum_{i\in S}F_{N_i}\Big(\frac{u^T\Gamma u}{u^T\Sigma u}\Big), 1-\frac{1}{|S|}\sum_{i\in S}F_{N_i}\Big(\frac{u^T\Gamma u}{u^T\Sigma u}\Big)\Bigg\}, \label{g:reform} \\
\beta\Sigma &=\arg\max_{\Gamma \succeq 0} \tilde{\mathcal{D}}\left(\Gamma, \{\mathbb{P}_i\}_{i\in S} \right). \label{g:maximizer}
\end{align}
We have the following useful decomposition,
\begin{align}
\label{key:decomp:D}
&\tilde{\mathcal{D}}\left(\beta\Sigma, \{\mathbb{P}_i\}_{i\in S} \right) -\tilde{\mathcal{D}}(\hat{\Gamma}, \{\mathbb{P}_i\}_{i\in S} )  \nonumber \\
=&\Big[\tilde{\mathcal{D}}(\beta\Sigma, \{\mathbb{P}_i\}_{i\in S} )-\mathcal{D}\left(\beta\Sigma, \{\mathbb{P}_i\}_{i\in S} \right)\Big]+\Big[\mathcal{D}(\hat{\Gamma}, \{\mathbb{P}_i\}_{i\in S} )-\tilde{\mathcal{D}}(\hat{\Gamma}, \{\mathbb{P}_i\}_{i\in S} )\Big] \nonumber \\
&+\Big[\mathcal{D}(\beta\Sigma, \{\mathbb{P}_i\}_{i\in S} )-\mathcal{D}(\beta\Sigma, \{X_i\}_{i\in S} )\Big]+\Big[\mathcal{D}(\hat{\Gamma}, \{X_i\}_{i\in S} )-\mathcal{D}(\hat{\Gamma}, \{\mathbb{P}_i\}_{i\in S} )\Big] \nonumber\\
& +\Big[\mathcal{D}(\beta\Sigma, \{X_i\}_{i\in S} )-\mathcal{D}(\hat{\Gamma}, \{X_i\}_{i\in S} )\Big] \nonumber \\
\leq & 2\sup_{\Gamma \succeq 0} \Big|\mathcal{D}(\Gamma, \{\mathbb{P}_i\}_{i\in S} )-\tilde{\mathcal{D}}(\Gamma, \{\mathbb{P}_i\}_{i\in S} )\Big|+2\sup_{\Gamma \succeq 0} \Big|\mathcal{D}(\Gamma, \{X_i\}_{i\in S})-\mathcal{D}(\Gamma, \{\mathbb{P}_i\}_{i\in S} )\Big| \nonumber \\
&+\Big[\mathcal{D}(\beta\Sigma, \{X_i\}_{i\in S} )-\mathcal{D}(\hat{\Gamma}, \{X_i\}_{i\in S} )\Big],
\end{align}
where  $\mathcal{D}\left(\Gamma, \{X_i\}_{i\in S}  \right)$ and $\mathcal{D}\left(\Gamma, \{\mathbb{P}_i\}_{i\in S} \right)$ are introduced in \eqref{fn:s:def} and \eqref{f:pop:def} respectively. We now bound each of the three terms in \eqref{key:decomp:D}. Note that $\mathcal{D}\left(\Gamma, \{\mathbb{P}_i\}_{i\in S} \right) $ can be rewritten as 
\[
 \mathcal{D}\left(\Gamma, \{\mathbb{P}_i\}_{i\in S} \right) =\inf_{\|u\|_2=1}\min\Bigg\{\frac{1}{|S|}\sum_{i\in S}F_{N_i}\Big(\frac{u^T\Gamma u}{u^T\Sigma_i u}\Big), 1-\frac{1}{|S|}\sum_{i\in S}F_{N_i}\Big(\frac{u^T\Gamma u}{u^T\Sigma_i u}\Big)\Bigg\}.
\]
Hence,
\begin{align}
&\sup_{\Gamma \succeq 0} \Big|\mathcal{D}(\Gamma, \{\mathbb{P}_i\}_{i\in S} )-\tilde{\mathcal{D}}(\Gamma, \{\mathbb{P}_i\}_{i\in S} )\Big| \nonumber\\
\leq &\sup_{\Gamma \succeq 0, \|u\|_2=1}\frac{1}{|S|}\sum_{i\in S}\Big|F_{N_i}\Big(\frac{u^T\Gamma u}{u^T\Sigma_i u}\Big)-F_{N_i}\Big(\frac{u^T\Gamma u}{u^T\Sigma u}\Big)\Big| \nonumber\\
=&\sup_{\Gamma \succeq 0, \|u\|_2=1}\frac{1}{|S|}\sum_{i\in S}\Bigg[\Big(u^T\Gamma u\Big(\frac{t_i}{u^T\Sigma_i u}+\frac{1-t_i}{u^T\Sigma u}\Big)\Big)f_{N_i}\Big(u^T\Gamma u\Big(\frac{t_i}{u^T\Sigma_i u}+\frac{1-t_i}{u^T\Sigma u}\Big)\Big)\Bigg] \cdot \frac{\Big|\frac{1}{u^T\Sigma_i u}-\frac{1}{u^T\Sigma u}\Big|}{\Big(\frac{t_i}{u^T\Sigma_i u}+\frac{1-t_i}{u^T\Sigma u}\Big)} \nonumber \\
\leq & \sup_{\|u\|_2=1}\frac{1}{|S|}\sum_{i\in S} c_1\sqrt{N_i}\frac{|u^T(\Sigma_i-\Sigma)u|}{u^T\Sigma_i u \wedge u^T\Sigma u}\leq c_1 \sqrt{\frac{\bar{n}}{\underline{B}}}\max_{i\in S}\frac{\|\Sigma_i-\Sigma\|}{\sigma_{min}(\Sigma_i)\wedge \sigma_{min}(\Sigma)}. \label{decom:part1:bound}
\end{align}
Here, the equality is by mean value theorem and $f_{N_i}$ denotes the density function of $\frac{1}{N_i}\chi^2(N_i)$, and the second inequality is due to Part (i) of Lemma \ref{Lemma:lbd for derivative}. The second term in \eqref{key:decomp:D} has been bounded in Lemma \ref{lemma:EPbound}. Regarding the third one, we have
\begin{align}
\label{decom:part3:bound}
&\mathcal{D}(\beta\Sigma, \{X_i\}_{i\in S} )-\mathcal{D}(\hat{\Gamma}, \{X_i\}_{i\in S} )\nonumber \\
\leq &\mathcal{D}(\beta\Sigma, \{X_i\}_{i\in S} )-\frac{m}{|S|}\mathcal{D}(\hat{\Gamma}, \{X_i\}_{i=1}^m )+\frac{m-|S|}{|S|} \nonumber\\
\leq & \mathcal{D}(\beta\Sigma, \{X_i\}_{i\in S} )-\frac{m}{|S|}\mathcal{D}(\beta\Sigma, \{X_i\}_{i=1}^m )+\frac{m-|S|}{|S|}\leq \frac{\epsilon}{1-\epsilon},
\end{align}
because $m\mathcal{D}(\Gamma, \{X_i\}_{i=1}^m )-m+|S|\leq |S| \mathcal{D}(\Gamma, \{X_i\}_{i\in S} )\leq m\mathcal{D}(\Gamma, \{X_i\}_{i=1}^m ),\forall \Gamma \succeq 0$ and $\hat{\Gamma}$ is the maximizer of $\mathcal{D}(\Gamma, \{X_i\}_{i=1}^m )$. Combining \eqref{key:decomp:D}-\eqref{decom:part3:bound} and Lemma \ref{lemma:EPbound}, we obtain that with probability at least $1-\exp(t)$,
\begin{align*}
&\tilde{\mathcal{D}}\left(\beta\Sigma, \{\mathbb{P}_i\}_{i\in S} \right) -\tilde{\mathcal{D}}(\hat{\Gamma}, \{\mathbb{P}_i\}_{i\in S} ) \nonumber \\
\leq &c_2 \sqrt{\frac{\bar{n}}{\underline{B}}}\max_{i\in S}\frac{\|\Sigma-\Sigma_i\|}{\sigma_{min}(\Sigma_i)\wedge \sigma_{min}(\Sigma)}+c_3\sqrt{\frac{d+t}{|S|\underline{B}}}+\frac{\epsilon}{1-\epsilon}.
\end{align*}
Since $\tilde{\mathcal{D}}\left(\beta\Sigma, \{\mathbb{P}_i\}_{i\in S} \right)=\frac{1}{2}$, the above inequality can be equivalently written as 
\begin{align}
\label{fun:err:bound}
& \Big|\frac{1}{2}-\frac{1}{|S|}\sum_{i\in S}F_{N_i}\Big(\frac{u^T\hat{\Gamma} u}{u^T\Sigma u}\Big)\Big|=\Big|\frac{1}{|S|}\sum_{i\in S}\Big(F_{N_i}\Big(\frac{u^T\hat{\Gamma} u}{u^T\Sigma u}\Big)-F_{N_i}(\beta)\Big)\Big|\nonumber \\
\leq &c_2 \sqrt{\frac{\bar{n}}{\underline{B}}}\max_{i\in S}\frac{\|\Sigma-\Sigma_i\|}{\sigma_{min}(\Sigma_i)\wedge \sigma_{min}(\Sigma)}+c_3\sqrt{\frac{d+t}{|S|\underline{B}}}+\frac{\epsilon}{1-\epsilon}, \quad \forall u{\rm ~with~} \|u\|_2=1.
\end{align}
Then, under the conditions $\epsilon\leq \frac{1}{4}$ and \eqref{con:small:gap}, we can have
\begin{align}
\label{slow:rate:part}
\Big|\frac{1}{2}-\frac{1}{|S|}\sum_{i\in S}F_{N_i}\Big(\frac{u^T\hat{\Gamma} u}{u^T\Sigma u}\Big)\Big|\leq \frac{5}{12}.
\end{align}
Referring to the concentration inequality for chi-squared random variable (e.g. Example 2.11 in \cite{wainwright2019high}):
\[
\mathbb{P}\Big(\Big|\frac{1}{N_i}\chi^2(N_i)-1\Big|\geq t\Big)\leq 2\exp\Big(-\frac{N_it^2}{8}\Big), ~~\forall t\in (0,1),
\]
we can conclude that 
\begin{align}
\label{concen:around1}
\Big|1-\frac{u^T\hat{\Gamma} u}{u^T\Sigma u}\Big|\leq \sqrt{\frac{8\bar{B}\log 25}{\underline{n}}}\leq \frac{1}{2},
\end{align}
as long as $\frac{\underline{n}}{\bar{B}}\geq 32\log 25$. Otherwise, $\frac{1}{|S|}\sum_{i\in S}F_{N_i}\Big(\frac{u^T\hat{\Gamma} u}{u^T\Sigma u}\Big)$ would be either too large or too small due to the concentration, hence contradicting with \eqref{slow:rate:part}. Moreover, Lemma \ref{lemma:median} shows that 
\begin{align}
\label{concen:beta:one}
|\beta-1|\leq \frac{c_4\bar{B}}{\underline{n}}\leq \frac{1}{2},
\end{align}
when $\frac{\underline{n}}{\bar{B}}$ is lower bounded by a large enough constant. Based on \eqref{concen:around1}-\eqref{concen:beta:one}, we apply mean value theorem to continue from \eqref{fun:err:bound},
\begin{align*}
&c_2 \sqrt{\frac{\bar{n}}{\underline{B}}}\max_{i\in S}\frac{\|\Sigma-\Sigma_i\|}{\sigma_{min}(\Sigma_i)\wedge \sigma_{min}(\Sigma)}+c_3\sqrt{\frac{d+t}{|S|\underline{B}}}+\frac{\epsilon}{1-\epsilon}\\
\geq &
\Big|\frac{u^T\hat{\Gamma} u}{u^T\Sigma u}-\beta\Big|\cdot \frac{1}{|S|}\sum_{i\in S}f_{N_i}\Big(t_i\frac{u^T\hat{\Gamma} u}{u^T\Sigma u}+(1-t_i)\beta\Big), \quad t_i\in [0,1] \\
\geq & c_5 e^{-c_6(\frac{\bar{n}\bar{B}}{\underline{n}\underline{B}})^{\frac{3}{2}}} \sqrt{\frac{\underline{n}}{\bar{B}}} \Big|\frac{u^T\hat{\Gamma} u}{u^T\Sigma u}-\beta\Big|,  \quad \forall u{\rm ~with~} \|u\|_2=1,
\end{align*}
where the second inequality is by Part (ii) of Lemma \ref{Lemma:lbd for derivative}. Multiplying both sides of the above inequality and using $u^T\Sigma u\leq \|\Sigma\|$ finishes the proof of Part (i).

For Part (ii), we first define for each $i\in S$,
\begin{align*}
z_{ij}=\frac{u^TX_{i,j}}{\sqrt{u^T\Sigma_iu}}, ~~\sigma_i^2={\rm Var}(z_{ij}^2), ~~\tau_i=\frac{\mathbb{E}|z_{ij}^2-1|^3}{\sigma_i^3},
\end{align*}
where $X_{i,j}\in \mathbb{R}^d$ denotes the $j$th row of $X_i$ and $u\in \mathbb{R}^d$ satisfies $\|u\|_2=1$. Note that the distribution of $z_{ij}$ is independent of $u$ since $X_{i,j}$ is elliptical. We adopt a slightly different decomposition from \eqref{key:decomp:D}:
\begin{align}
\label{key:decomp:newD}
&\bar{\mathcal{D}}\left(\Sigma, \{\mathbb{P}_i\}_{i\in S} \right) -\bar{\mathcal{D}}(\hat{\Gamma}, \{\mathbb{P}_i\}_{i\in S} )  \nonumber \\
\leq & 2\sup_{\Gamma \succeq 0} \Big|\mathcal{D}(\Gamma, \{\mathbb{P}_i\}_{i\in S} )-\bar{\mathcal{D}}(\Gamma, \{\mathbb{P}_i\}_{i\in S} )\Big|+2\sup_{\Gamma \succeq 0} \Big|\mathcal{D}(\Gamma, \{X_i\}_{i\in S})-\mathcal{D}(\Gamma, \{\mathbb{P}_i\}_{i\in S} )\Big| \nonumber \\
&+\Big[\mathcal{D}(\Sigma, \{X_i\}_{i\in S} )-\mathcal{D}(\hat{\Gamma}, \{X_i\}_{i\in S} )\Big],
\end{align}
where 
\begin{align*}
&\bar{\mathcal{D}}\left(\Gamma, \{\mathbb{P}_i\}_{i\in S} \right)\\
=&\inf_{\|u\|_2=1} \min \Bigg\{
\frac{1}{|S|} \sum_{i\in S} \Phi\bigg(\sqrt{\frac{N_i}{\sigma_i^2}}\Big(\frac{u^T\Gamma u}{u^T\Sigma u}-1\Big)\bigg),1-\frac{1}{|S|} \sum_{i\in S} \Phi\bigg(\sqrt{\frac{N_i}{\sigma_i^2}}\Big(\frac{u^T\Gamma u}{u^T\Sigma u}-1\Big)\bigg) 
\Bigg\},
\end{align*}
with $\Phi(\cdot)$ being the CDF of $\mathcal{N}(0,1)$. The second and third terms in \eqref{key:decomp:newD} can be bounded using exactly the same arguments as before. But we need a new bound for the first one. Denote
\begin{align*}
&\vec{\mathcal{D}}\left(\Gamma, \{\mathbb{P}_i\}_{i\in S} \right)\\
=&\inf_{\|u\|_2=1} \min \Bigg\{
\frac{1}{|S|} \sum_{i\in S} \Phi\bigg(\sqrt{\frac{N_i}{\sigma_i^2}}\Big(\frac{u^T\Gamma u}{u^T\Sigma_i u}-1\Big)\bigg),1-\frac{1}{|S|} \sum_{i\in S} \Phi\bigg(\sqrt{\frac{N_i}{\sigma_i^2}}\Big(\frac{u^T\Gamma u}{u^T\Sigma_i u}-1\Big)\bigg) 
\Bigg\}.
\end{align*}
We have
\begin{align}
\label{ellip:first:bound}
&\sup_{\Gamma \succeq 0} \Big|\mathcal{D}(\Gamma, \{\mathbb{P}_i\}_{i\in S} )-\bar{\mathcal{D}}(\Gamma, \{\mathbb{P}_i\}_{i\in S} )\Big| \nonumber \\
\leq & \sup_{\Gamma \succeq 0} \Big|\mathcal{D}(\Gamma, \{\mathbb{P}_i\}_{i\in S} )-\vec{\mathcal{D}}(\Gamma, \{\mathbb{P}_i\}_{i\in S} )\Big| + \sup_{\Gamma \succeq 0} \Big|\vec{\mathcal{D}}(\Gamma, \{\mathbb{P}_i\}_{i\in S} )-\bar{\mathcal{D}}(\Gamma, \{\mathbb{P}_i\}_{i\in S} )\Big| \nonumber\\
\leq & \sup_{\|u\|_2=1,\Gamma \succeq 0} \frac{1}{|S|}\sum_{i\in S}\Bigg|\mathbb{P} \Big (\frac{1}{N_i}\sum_{j=1}^{N_i}z_{ij}^2\leq \frac{u^{T} \Gamma u}{u^T\Sigma_i u} \Big)-\Phi\bigg(\sqrt{\frac{N_i}{\sigma_i^2}}\Big(\frac{u^T\Gamma u}{u^T\Sigma_i u}-1\Big)\bigg)\Bigg| \nonumber\\
&+\sup_{\|u\|_2=1,\Gamma \succeq 0} \frac{1}{|S|}\sum_{i\in S}\Bigg|\Phi\bigg(\sqrt{\frac{N_i}{\sigma_i^2}}\Big(\frac{u^T\Gamma u}{u^T\Sigma_i u}-1\Big)\bigg)-\Phi\bigg(\sqrt{\frac{N_i}{\sigma_i^2}}\Big(\frac{u^T\Gamma u}{u^T\Sigma u}-1\Big)\bigg)\Bigg|\nonumber \\
\leq & c_7\max_{i\in S} \tau_i \sqrt{\frac{\bar{B}}{\underline{n}}}+c_8\Big(\sqrt{\frac{\bar{n}}{\underline{B}\min_{i\in S}\sigma_i^2}}+1\Big)\max_{i\in S}\frac{\|\Sigma_i-\Sigma\|}{\sigma_{min}(\Sigma_i)\wedge \sigma_{min}(\Sigma)}.
\end{align}
Here, in the last inequality we have used Berry-Essen Theorem (see details in the proof of Lemma \ref{lemma:median}) to bound the first sup and mean value theorem together with the simple fact $\sup_{x\geq 0}|t|xe^{-\frac{t^2}{2}(x-1)^2}\leq c_9(|t|+1)$ to bound the second sup. We then combine \eqref{key:decomp:newD} with \eqref{decom:part3:bound},\eqref{ellip:first:bound} and Lemma \ref{lemma:EPbound} to obtain that with probability at least $1-\exp(-t)$,
\begin{align}
&\bar{\mathcal{D}}\left(\Sigma, \{\mathbb{P}_i\}_{i\in S} \right) -\bar{\mathcal{D}}(\hat{\Gamma}, \{\mathbb{P}_i\}_{i\in S} ) \nonumber \\
\leq & \frac{\epsilon}{1-\epsilon}+c_{10}\sqrt{\frac{d+t}{|S|\underline{B}}}+c_7\max_{i\in S} \tau_i \sqrt{\frac{\bar{B}}{\underline{n}}}+c_8\Big(\sqrt{\frac{\bar{n}}{\underline{B}\min_{i\in S}\sigma_i^2}}+1\Big)\max_{i\in S}\frac{\|\Sigma_i-\Sigma\|}{\sigma_{min}(\Sigma_i)\wedge \sigma_{min}(\Sigma)}. \label{final:ellip:upper}
\end{align}
On the other hand, since $\Sigma=\arg\min_{\Gamma \succeq 0}\bar{\mathcal{D}}(\Gamma, \{\mathbb{P}_i\}_{i\in S} )$, we can write $\forall u {\rm~with~}\|u\|_2=1$, 
\begin{align}
&\bar{\mathcal{D}}\left(\Sigma, \{\mathbb{P}_i\}_{i\in S} \right) -\bar{\mathcal{D}}(\hat{\Gamma}, \{\mathbb{P}_i\}_{i\in S} )\nonumber \\
\geq & \bigg|\frac{1}{|S|}\sum_{i\in S}\bigg(\Phi\bigg(\sqrt{\frac{N_i}{\sigma_i^2}}\Big(\frac{u^T\hat{\Gamma} u}{u^T\Sigma u}-1\Big)\bigg)-\Phi(0)\bigg)\bigg|\geq \sqrt{\frac{\underline{n}}{\bar{B}\max_{i\in S}\sigma^2_i}}\cdot \psi(t^*)\Big|\frac{u^T\hat{\Gamma} u}{u^T\Sigma u}-1\Big|, \label{final:ellip:lower}
\end{align}
where the last inequality is by mean value theorem and $\psi(\cdot) $ denotes the density of $\mathcal{N}(0,1)$. Under the conditions $\epsilon\leq \frac{1}{4}$, $\frac{\underline{n}}{\bar{B}}\geq C_1$ and \eqref{con:small:gap}, the upper bound in \eqref{final:ellip:upper} is no larger than $\frac{5}{12}$, hence $t^*$ in \eqref{final:ellip:lower} is bounded by an absolute constant. This fact together with \eqref{final:ellip:upper}-\eqref{final:ellip:lower} completes the proof for Part (ii).
\end{proof}

\subsubsection{Technical lemmas}

\begin{lemma}\label{lemma:EPbound}
For any given $t>0$, the following uniform bound holds with probability at least $1-\exp(-t)$,
\begin{align*}
\sup_{\Gamma\succeq 0}|\mathcal{D}\left(\Gamma, \{X_i\}_{i\in S} \right)-\mathcal{D}\left(\Gamma, \{\mathbb{P}_i\}_{i\in S} \right)| \leq c\sqrt{\frac{d+t}{|S|\underline{B}}},
\end{align*}
where $c>0$ is an absolute constant.

\end{lemma}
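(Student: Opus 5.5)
The plan is to view the statement as a uniform deviation bound for an empirical process indexed by the class of sets
\[
\mathcal{C}=\Big\{ \{Y\in\mathbb{R}^{d\times d}_{\succeq 0}: u^TYu\leq u^T\Gamma u\}: \|u\|_2=1,\ \Gamma\succeq 0\Big\},
\]
evaluated at the group sample covariances $Y_{ij}=\frac{1}{n_{ij}}(X_i^{(j)})^TX_i^{(j)}$, $i\in S$, $j\in[B_i]$. These $\sum_{i\in S}B_i$ matrices are independent, because the groups are disjoint and the tasks are independent. They are not identically distributed, and they carry unequal weights $\frac{1}{|S|B_i}$.

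First, reduce the depth difference to a supremum over half-spaces. Since $|\inf_u\min\{a_u,b_u\}-\inf_u\min\{a'_u,b'_u\}|\leq\sup_u\max\{|a_u-a'_u|,|b_u-b'_u|\}$, and the ``$>$'' term equals one minus the ``$\leq$'' term for both the empirical and the population versions, it is enough to bound
\[
Z=\sup_{\|u\|_2=1,\Gamma\succeq0}\Big|\frac{1}{|S|}\sum_{i\in S}\frac{1}{B_i}\sum_{j=1}^{B_i}\Big(\mathbb{I}\{u^TY_{ij}u\leq u^T\Gamma u\}-\mathbb{P}(u^TY_{ij}u\leq u^T\Gamma u)\Big)\Big|.
\]
Each indicator depends on $(u,\Gamma)$ only through the pair $(u,s)$ with $s=u^T\Gamma u\geq 0$. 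So the index set is $\{(u,s)\}$, and the function is $Y\mapsto\mathbb{I}\{\langle Y,uu^T\rangle\leq s\}$.

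Second, control complexity. These are half-spaces in the $d(d+1)/2$-dimensional space of symmetric matrices, but with the special normal direction $uu^T$. To obtain VC dimension $O(d)$ rather than $O(d^2)$, I would follow \cite{chen2018robust}. Write $u^TYu=\|X^{(j)}u\|_2^2/n_{ij}$, so the event is $\{\|Xu\|_2^2\leq s\,n_{ij}\}$. The subgraph class $\{(u,s)\}$ is then defined by the sign of a polynomial of degree $2$ in the $d+1$ parameters $(u,s)$. By the Warren/Goldberg–Jerrum bound on sign patterns of polynomials, the VC dimension is $O(d)$; one can also bound the number of distinct dichotomies on $N$ points by $(cN)^{O(d)}$ directly.

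Third, concentrate. The weights $w_{ij}=\frac{1}{|S|B_i}$ satisfy $\sum w_{ij}^2=\frac{1}{|S|^2}\sum_{i}\frac{1}{B_i}\leq\frac{1}{|S|\underline{B}}$. Bounded differences (McDiarmid) over the independent $Y_{ij}$ then give
\[
Z\leq\mathbb{E}Z+\sqrt{2t/(|S|\underline{B})}
\]
with probability at least $1-e^{-t}$. For $\mathbb{E}Z$, symmetrize with Rademacher signs. Conditionally on the data, the weighted Rademacher process over a class with VC dimension $O(d)$ is bounded by Dudley's entropy integral for the weighted $L_2$ norm, using the Haussler covering bound $(C/\eta)^{O(d)}$. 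This gives $\mathbb{E}Z\lesssim\sqrt{d\sum w_{ij}^2}\leq c\sqrt{d/(|S|\underline{B})}$. Combining the two bounds and using $\sqrt{a}+\sqrt{b}\lesssim\sqrt{a+b}$ yields the claim.

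The main obstacle is the weighted, non-i.i.d. chaining step. Standard VC results are stated for uniform weights. I would handle this by applying Dudley's bound with the sub-Gaussian increments of $\sum w_{ij}\varepsilon_{ij}(f-g)(Y_{ij})$, whose variance proxy is $\sum w_{ij}^2(f-g)^2(Y_{ij})\leq\sum w_{ij}^2\,\|f-g\|^2_{L_2(\mu_w)}$. Here $\mu_w$ is the normalized weighted empirical measure, and the covering numbers of VC classes are uniform over all probability measures, so the weighting does no harm.
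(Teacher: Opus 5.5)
Your proposal is correct and follows essentially the same route as the paper. You reduce to a weighted empirical process over the indicators $\mathbb{I}\{u^TYu\le s\}$, apply the bounded difference inequality with $\sum_{i,j}w_{ij}^2\le 1/(|S|\underline{B})$, then symmetrize and run Dudley's integral in the weighted empirical $L^2$ norm with VC-type covering numbers. The only difference is how you certify the $O(d)$ VC dimension: you use the Warren/Goldberg--Jerrum sign-pattern bound for the degree-two polynomials $(u,s)\mapsto s-u^TYu$, while the paper cites Lemma 1 and Proposition 1 of \cite{depersin2024robust}. Either works, but note that the argument in \cite{chen2018robust} is for vector data, so it is your polynomial argument, not that citation, that covers the grouped-matrix case.
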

\begin{proof}
First, it is straightforward to confirm that
\begin{align*}
 &\sup_{\Gamma\succeq 0}|\mathcal{D}\left(\Gamma, \{X_i\}_{i\in S} \right)-\mathcal{D}\left(\Gamma, \{\mathbb{P}_i\}_{i\in S} \right)| \\
 \leq &  \sup_{\|u\|_2=1,t\in\mathbb{R}}\bigg|\sum_{i\in S}\sum_{j=1}^{B_i}\frac{1}{|S|B_i}\lb\mathbb{I}\Big\{u^T\hat{\Sigma}_i^{(j)}u\leq t\Big\}-\Pb(u^T\hat{\Sigma}_i^{(j)}u\leq t)\rb\bigg|:=\Delta,
\end{align*}
where $\hat{\Sigma}_i^{(j)}=\frac{(X_i^{(j)})^TX_i^{(j)}}{n_i}$. We now focus on bounding the empirical process on the right-hand side of the above inequality. Note that the empirical process consists of independent (but non-identically distributed) random variables $\hat{\Sigma}_i^{(j)}$'s. We follow standard arguments to bound $\Delta-\mathbb{E}\Delta$ and $\mathbb{E}\Delta$ respectively. By the bounded difference inequality (e.g. Theorem 6.2 in \cite{bouch13non}), we immediately obtain
\begin{align}
\label{bdi:give}
\mathbb{P}\Bigg(\Delta\geq \mathbb{E}(\Delta)+\sqrt{\frac{t}{2|S|\underline{B}}}\Bigg)\leq e^{-t}, ~~\forall t>0.
\end{align}
To bound $\mathbb{E}\Delta$, a standard symmetrization gives
\begin{align}
\mathbb{E}\Delta\leq 2\mathbb{E}\sup_{\|u\|_2=1,t\in\mathbb{R}}\bigg|\sum_{i\in S}\sum_{j=1}^{B_i}\frac{\epsilon_{ij}}{|S|B_i}\mathbb{I}\Big\{u^T\hat{\Sigma}_i^{(j)}u\leq t\Big\}\bigg|\leq 2\mathbb{E}\sup_{f\in \mathcal{F}}\bigg|\underbrace{\sum_{i\in S}\sum_{j=1}^{B_i}\frac{\epsilon_{ij}}{|S|B_i}f(\hat{\Sigma}_i^{(j)})}_{Z_f}\bigg|, \label{sym:red}
\end{align}
where $\epsilon_{ij}$'s are independent symmetric Bernoulli variables, and $\mathcal{F}$ is the boolean function class $\mathcal{F}=\big\{M\in \mathbb{M}_d\rightarrow \mathbb{I}\{u^TMu\leq t \},\|u\|_2=1,t\in \mathbb{R}\big\}$ with $\mathbb{M}_d$ being the space of symmetric $d$-by-$d$ matrices. We now condition on $\{\hat{\Sigma}_i^{(j)}\}$ and aim to use Dudley inequality for $\mathbb{E}\sup_{f\in \mathcal{F}}|Z_f|$. To this end, we first show that the increments of the stochastic process $(Z_f)_{f\in \mathcal{F}}$ are subgaussian: $\forall f,g\in \mathcal{F}$,
\[
\|Z_f-Z_g\|_{\psi_2}\leq c_1 \frac{1}{\sqrt{|S|\underline{B}}}\cdot \underbrace{\sqrt{\frac{\sum_{i\in S}\sum_{j=1}^{B_i}\frac{1}{|S|^2B_i^2}(f(\hat{\Sigma}_i^{(j)})-g(\hat{\Sigma}_i^{(j)}))^2}{\sum_{i\in S}\sum_{j=1}^{B_i}\frac{1}{|S|^2B_i^2}}}}_{=\|f-g\|_{L^2(\mu)}},
\]
where $\|f-g\|_{L^2(\mu)}$ is the $L_2$ norm based on a weighted empirical measure, and $c_1>0$ is an absolute constant. Hence, we can apply Dudley inequality (e.g., Theorem 8.1.3 in \cite{vershynin2018high}) to obtain
\begin{align}
\mathbb{E}\sup_{f\in \mathcal{F}}|Z_f|&\leq \frac{c_2}{\sqrt{|S|\underline{B}}} \mathbb{E}\int_0^1 \sqrt{\log \mathcal{N}(\mathcal{F},L^2(\mu), t)}dt \nonumber \\
&\leq \frac{c_2}{\sqrt{|S|\underline{B}}} \mathbb{E}\int_0^1 \sqrt{c_3 {\rm vc}(\mathcal{F})\log(c_3/t)}dt \leq c_4\sqrt{\frac{d}{|S|\underline{B}}}, \label{dudley:bound}
\end{align}
where the second inequality bounds the metric
entropy $\log \mathcal{N}(\mathcal{F},L^2(\mu), t)$ via VC dimension ${\rm vc}(\mathcal{F})$ \cite{dudley78cover}, and the last inequality holds since ${\rm vc}(\mathcal{F})\leq c_5 d$ according to Lemma 1 and Proposition 1 in \cite{depersin2024robust}. Here, $c_i,i=2,\dots, 5$ are absolute constants. Putting together \eqref{bdi:give}-\eqref{dudley:bound} completes the proof. 
\end{proof}

\begin{lemma}\label{lemma:median}
Let $\{g_{ij}\}_{1\leq i\leq T, 1\leq j\leq N_i}$ be i.i.d. random variables with $\mathbb{E}(g_{ij}^2)=1$ and ${\rm Var}(g_{ij}^2)=\sigma^2<\infty$. Assume 
\[
C_g:=\frac{\mathbb{E}|g_{ij}^2-1|^3}{\sigma^3}<\infty,
\]
and $\frac{C_g}{\sqrt{\min_{i}N_i}}\leq c$. If there exits $\beta$ such that
\begin{align}
\label{eq:beta:type}
\frac{1}{T}\sum_{i=1}^T\mathbb{P}\Big(\frac{1}{N_i}\sum_{j=1}^{N_i}g_{ij}^2\leq \beta\Big)=\frac{1}{2},
\end{align}
then $\beta$ satisfies 
\begin{align}
\label{beta:concen:bound}
|\beta-1|\leq \frac{\tilde{c} C_g\sigma }{\min_{i}N_i}.
\end{align}
Here, $c,\tilde{c}>0$ are absolute constants.
\end{lemma}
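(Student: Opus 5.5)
The approach is to compare each summand of \eqref{eq:beta:type} with its Gaussian approximation through the Berry--Esseen theorem and then invert the resulting equation for $\beta$. Write $S_i=\frac{1}{N_i}\sum_{j=1}^{N_i}g_{ij}^2$, $F_i(x)=\mathbb{P}(S_i\leq x)$, and $G_i(x)=\Phi\big(\sqrt{N_i}(x-1)/\sigma\big)$. A Gaussian approximation is not enough here: it gives $|F_i-G_i|\lesssim C_g/\sqrt{N_i}$, and inverting that yields only $|\beta-1|\lesssim C_g\sigma/N_i$ after multiplying by the scale $\sigma/\sqrt{N_i}$. So the plan is as follows.

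\textbf{Step 1.} By the Berry--Esseen theorem applied to the i.i.d.\ centered variables $g_{ij}^2-1$ with variance $\sigma^2$ and normalized third moment $C_g$,
\[
\sup_x |F_i(x)-G_i(x)|\leq \frac{c_0 C_g}{\sqrt{N_i}}\leq \frac{c_0C_g}{\sqrt{\min_i N_i}}.
\]
Averaging over $i$ and using \eqref{eq:beta:type} gives
\[
\Big|\frac{1}{T}\sum_{i}G_i(\beta)-\frac{1}{2}\Big|\leq \frac{c_0C_g}{\sqrt{\min_iN_i}}\leq c_0c.
\]

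\textbf{Step 2.} Since $\Phi(0)=1/2$, we have
\[
\frac{1}{T}\sum_i G_i(\beta)-\frac12=\frac{1}{T}\sum_i\Big[\Phi\big(\sqrt{N_i}(\beta-1)/\sigma\big)-\Phi(0)\Big].
\]
Every term on the right has the sign of $\beta-1$, so there is no cancellation. Each term is at least $\big|\Phi(\sqrt{\min_i N_i}\,|\beta-1|/\sigma)-\tfrac12\big|$ in absolute value, since $\Phi(x)-\tfrac12$ is monotone in $|x|$. This yields
\[
\Big|\Phi\big(\sqrt{\underline N}\,|\beta-1|/\sigma\big)-\tfrac12\Big|\leq \frac{c_0C_g}{\sqrt{\underline N}}, \qquad \underline N=\min_iN_i.
\]

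\textbf{Step 3.} Choose $c$ small enough that $c_0c<1/4$, say. Then the argument $x=\sqrt{\underline N}|\beta-1|/\sigma$ is bounded by an absolute constant $x_0$, because $\Phi(x)-1/2$ must stay below $1/4$. On $[0,x_0]$ the density satisfies $\psi\geq\psi(x_0)$, so the mean value theorem gives $\Phi(x)-\tfrac12\geq \psi(x_0)x$. Hence
\[
\sqrt{\underline N}\,|\beta-1|/\sigma\leq \frac{c_0C_g}{\psi(x_0)\sqrt{\underline N}},
\]
which rearranges to $|\beta-1|\leq \tilde c\,C_g\sigma/\underline N$. This is exactly \eqref{beta:concen:bound}.

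The main subtlety is Step 2. The $N_i$ differ across $i$, so the Gaussian terms have different scales. The key observation is that all summands share the sign of $\beta-1$, so the smallest scale $\min_iN_i$ controls the average from below. Beyond that, we only need $C_g/\sqrt{\min_iN_i}\leq c$ so that the linearization in Step 3 stays in a bounded region. No Edgeworth correction is needed, because the stated bound carries the factor $C_g\sigma$ rather than $\sigma^2$.
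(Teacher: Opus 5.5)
Your proof is correct and follows the paper's argument essentially step for step: Berry--Esseen for each $i$, averaging against the defining identity for $\beta$, bounding every summand below by the $\min_i N_i$ term because all summands share the sign of $\beta-1$ (the paper instead splits into the cases $\beta\geq 1$ and $\beta<1$), and a mean-value step whose intermediate point stays bounded because $C_g/\sqrt{\min_i N_i}\leq c$. One expository slip: your opening claim that ``a Gaussian approximation is not enough'' is wrong and contradicts your own Step 3 and closing remark, since inverting the Berry--Esseen bound gives exactly the target order $C_g\sigma/\min_i N_i$.
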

\begin{proof}
Throughout the proof, we use $c_i$ to denote absolute constants. We first write
\begin{equation}
\label{reform:bethm}
    \Pb\lb\frac{1}{N_i}\sum_{j=1}^{N_i} g_{ij}^2\leq \beta\rb=\Pb\lb \frac{\sum_{j=1}^{N_i} (g_{ij}^2-1)}{\sqrt{\sigma^2 N_i}}\leq \sqrt\frac{N_{i}}{\sigma^2}(\beta-1)\rb.
\end{equation}
By Berry-Esseen Theorem (e.g. Theorem 3.4.9 in \cite{durrett2010probability}), we have
\begin{align}
\label{berry:esseen:exp}
\sup_{t\in \mathbb{R}}\left| \Pb\lb \frac{\sum_{j=1}^{N_i} (g_{ij}^2-1)}{\sqrt{\sigma^2 N_i}}\leq t\rb-\Phi\lb t\rb \right|\leq  \frac{c_1C_g}{\sqrt{N_i}},
\end{align}
where $\Phi(\cdot)$ is the cumulative distribution function of $\mathcal{N}(0,1)$. Suppose $\beta\geq 1$, then combining \eqref{eq:beta:type} and \eqref{reform:bethm}-\eqref{berry:esseen:exp} gives
\begin{align*}
\frac{c_1C_g}{\sqrt{\min_i N_i}}\geq  \frac{1}{T}\sum_{i=1}^T \frac{c_1C_g}{\sqrt{N_i}} &\geq \frac{1}{T}\sum_{i=1}^T\Phi\lb \sqrt\frac{N_{i}}{\sigma^2}(\beta-1)\rb-\frac{1}{2}\\
&\geq \Phi\lb \sqrt\frac{\min_{i}N_{i}}{\sigma^2}(\beta-1)\rb-\Phi(0)=\psi(t^*)\cdot \sqrt\frac{\min_{i}N_{i}}{\sigma^2}(\beta-1).
\end{align*}
Here, the last equality is by mean value theorem and $\psi(\cdot)$ denotes the density function of $\mathcal{N}(0,1)$. As long as $\frac{c_1C_g}{\sqrt{\min_i N_i}}\leq \frac{1}{4}$, the value of $t^*$ will be bounded by an absolute constant so that we can conclude 
\[
0\leq \beta-1\leq \frac{c_2C_g\sigma}{\min_i N_i}.
\]
If $\beta<1$, similar arguments show
\[
\frac{c_1C_g}{\sqrt{\min_i N_i}}\geq \Phi(0)-\Phi\lb \sqrt\frac{\min_{i}N_{i}}{\sigma^2}(\beta-1)\rb=\psi(t^{**})\cdot \sqrt\frac{\min_{i}N_{i}}{\sigma^2}(1-\beta),
\]
yielding $0\leq 1-\beta\leq \frac{c_3C_g\sigma}{\min_i N_i}$. In both cases, we have proved the result \eqref{beta:concen:bound}.
\end{proof}

\begin{lemma}\label{Lemma:lbd for derivative}
Let $f_n(x)$ be the probability density  function of $X/n$ where $X\sim \chi^2(n)$. The following hold:
\begin{itemize}
\item[(i)] $\sup_{x\geq 0}xf_n(x)\leq c\sqrt{n}$ for some absolute constant $c>0$.
\item[(ii)] Let $\kappa>0$ be some fixed constant. As long as $\frac{\kappa}{\sqrt{n}}\leq \frac{1}{2}$, we have
\begin{equation*}
    f_n(x) \geq c_{\kappa}\sqrt{n}, ~~\forall x\in \big[1-\kappa/\sqrt{n},1+\kappa/\sqrt{n} \big],
\end{equation*}
where $c_{\kappa}>0$ is a constant that only depends on $\kappa$ and decreases with $\kappa$.
\end{itemize}
\end{lemma}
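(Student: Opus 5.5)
The plan is to use the explicit chi-squared density and Stirling-type bounds on $\Gamma$. Let $g_n(y)=\frac{y^{n/2-1}e^{-y/2}}{2^{n/2}\Gamma(n/2)}$ be the density of $\chi^2(n)$. Then $f_n(x)=n\,g_n(nx)$ for $x\ge 0$. Both parts reduce to elementary estimates once we use the two-sided Stirling bounds
\[
\sqrt{2\pi/z}\,(z/e)^z\le \Gamma(z)\le \sqrt{2\pi/z}\,(z/e)^z e^{1/(12z)},\qquad z>0.
\]

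For Part (i), I would first note the identity $y\,g_n(y)=n\,g_{n+2}(y)$. This follows from $\Gamma(n/2+1)=\tfrac n2\Gamma(n/2)$ together with the extra factor of $2$ in $2^{(n+2)/2}$. Writing $y=nx$ gives
\[
x f_n(x)=y\,g_n(y)=n\,g_{n+2}(y).
\]
Hence $\sup_{x\ge0}xf_n(x)=n\sup_y g_{n+2}(y)$. The density $g_{n+2}$ is unimodal with mode at $y=n$, so we only need its value there:
\[
g_{n+2}(n)=\frac{n^{n/2}e^{-n/2}}{2^{n/2+1}\Gamma(n/2+1)}.
\]
Applying the lower Stirling bound $\Gamma(n/2+1)\ge\sqrt{\pi n}\,(n/(2e))^{n/2}$ yields $g_{n+2}(n)\le 1/(2\sqrt{\pi n})$. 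Therefore $\sup_x xf_n(x)\le \sqrt n/(2\sqrt\pi)$, which proves (i) with $c=1/(2\sqrt\pi)$.

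For Part (ii), I would instead use the upper Stirling bound on $\Gamma(n/2)$, namely $\Gamma(n/2)\le\sqrt{4\pi/n}\,(n/(2e))^{n/2}e^{1/(6n)}$. Substituting this into $f_n(x)=n(nx)^{n/2-1}e^{-nx/2}/(2^{n/2}\Gamma(n/2))$, the powers of $n$ and of $2$ cancel. One obtains
\[
f_n(x)\ \ge\ \frac{e^{-1/6}}{\sqrt{4\pi}}\,\sqrt n\; x^{-1}\exp\Big(\tfrac n2\big(\log x-(x-1)\big)\Big).
\]
Now set $x=1+s$ with $|s|\le\kappa/\sqrt n\le 1/2$. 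The elementary inequality $\log(1+s)-s\ge -s^2$ for $|s|\le 1/2$ makes the exponent at least $-\tfrac n2 s^2\ge-\kappa^2/2$. Also $x^{-1}\ge 2/3$ on this interval. Hence $f_n(x)\ge c_\kappa\sqrt n$ with $c_\kappa=\frac{2e^{-1/6}}{3\sqrt{4\pi}}e^{-\kappa^2/2}$, which is positive and decreasing in $\kappa$.

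The main point requiring care is making sure the Stirling bounds are applied in the right direction in each part (lower bound for (i), upper bound for (ii)) and hold uniformly for all $n\ge1$, including small $n$. Since the stated Stirling inequalities are valid for every $z>0$, no separate small-$n$ case should be needed. The constraint $\kappa/\sqrt n\le1/2$ is exactly what keeps $s$ in the range where $\log(1+s)-s\ge -s^2$ and $x\ge 1/2$.
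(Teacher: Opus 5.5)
Your proof is correct and follows the same route as the paper: the explicit density, Stirling's bounds on $\Gamma(n/2)$, the maximum of $x^{n/2}e^{-nx/2}$ at $x=1$ for (i), and a lower bound on $\log(1+s)-s$ for (ii); your $\chi^2(n+2)$-mode identity is a repackaging of the paper's inequality $1-x+\log x\le 0$ and gives the same constant $1/(2\sqrt{\pi})$. The only real difference is in (ii): your one-sided bound $\log(1+s)-s\ge -s^2$ replaces the paper's third-order Taylor remainder, so you get $c_\kappa\propto e^{-\kappa^2/2}$ with no cubic term, in place of the paper's $e^{-\kappa^2/4-4\kappa^3/3}$.
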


\begin{proof}
The density function $f_n(x)$ takes the form
 \begin{equation*}
f_n(x)
= \frac{n^{\frac n2}}{2^{\frac n2}\,\Gamma\!\bigl(\frac n2\bigr)}
\,x^{\frac n2 - 1}
\exp\lb-\frac{n x}{2}\rb,
~  x>0.
 \end{equation*}
 By Stirling‘s formula, 
 \[
 \sqrt{\frac{2\pi}{z}}\Big(\frac{z}{e}\Big)^z\leq \Gamma(z)\leq \sqrt{\frac{2\pi}{z}}\Big(\frac{z}{e}\Big)^ze^{\frac{1}{12z}}, ~~\forall z>0.
 \]
We thus obtain
 \begin{equation}
c_1\Big(\frac{n}{2e}\Big)^{\frac{n}{2}}n^{-\frac{1}{2}}\leq  \Gamma\Big(\frac{n}{2}\Big) \leq c_2\Big(\frac{n}{2e}\Big)^{\frac{n}{2}}n^{-\frac{1}{2}}, \label{gamma:sim}
 \end{equation}
 where $c_1,c_2>0$ are absolute constants. This implies
 \begin{align*}
xf_n(x)\leq c_1^{-1}\sqrt{n}\exp\lb\frac{n}{2}\lb 1-x+\log (x)\rb\rb\leq c_1^{-1}\sqrt{n}, ~~\forall x>0,
 \end{align*}
 where the second inequality is due to the fact that $1-x+\log x\leq 0$ for all $x>0$. This finishes the proof of Part (i).

Regarding Part (ii), from \eqref{gamma:sim} we have
\begin{align}
f_n(x)&\geq c_2^{-1}\sqrt{n}x^{-1}\exp\lb\frac{n}{2}\lb 1-x+\log (x)\rb\rb \nonumber \\
&\geq \frac{2}{3}c_2^{-1}\sqrt{n}\exp\lb\frac{n}{2}\lb 1-x+\log (x)\rb\rb, \quad \forall x\in \big[1-\kappa/\sqrt{n},1+\kappa/\sqrt{n} \big], \label{pdf:lower:eq1}
\end{align}
under the condition $\frac{\kappa}{\sqrt{n}}\leq \frac{1}{2}$. Using Taylor’s Theorem, it is direct to verify
\begin{align}
\label{pdf:lower:eq2}
\big|\log(1+t)-t+\frac{1}{2}t^2\big|\leq \frac{8}{3}|t|^3,\quad \forall t\geq -\frac{1}{2}.
\end{align}
Since $x\geq 1/2$ for each $x\in \big[1-\kappa/\sqrt{n},1+\kappa/\sqrt{n} \big]$, we can use \eqref{pdf:lower:eq2} to approximate $\log x$ in \eqref{pdf:lower:eq1}, obtaining $\forall x\in \big[1-\kappa/\sqrt{n},1+\kappa/\sqrt{n} \big]$,
\begin{align*}
f_n(x)&\geq \frac{2}{3}c_2^{-1}\sqrt{n}\exp \Big(\frac{n}{2}\Big(-\frac{1}{2}(x-1)^2-\frac{8}{3}|x-1|^3\Big)\Big) \\
&\geq \frac{2}{3}c_2^{-1}\sqrt{n}\exp \Big(-\frac{\kappa^2}{4}-\frac{4\kappa^3}{3}\Big).
\end{align*}
The proof of Part (ii) is completed.

\end{proof}

\subsection{Completion of the proof}

Under Assumption \ref{assumption:data simi}, we first prove the following deterministic and non-asymptotic result: set $\lambda_i=\lambda\geq 2\max_{i\in S}\|\hat{V}_i\hat{V}_i^T-V_iV_i^T\|_F$, then it holds that 
\begin{align}
\label{general:nonasymp}
\max_{i\in S}\|\tilde{V}_i\tilde{V}_i^T-V_iV_i^T\|_F\leq 6\sqrt{2}\cdot \Big[\big(\|\tilde{A}-VV^T\|_F+\delta\big)\wedge \frac{\lambda}{2}\Big].
\end{align}
When $\|\tilde{A}-VV^T\|_F+\delta\leq \frac{\lambda}{2}$, we know that 
\[
\|\tilde{A}-\hat{A}_i\|_F\leq \|\tilde{A}-VV^T\|_F+\|VV^T-V_iV_i^T\|_F+\|V_iV_i^T-\hat{V}_i\hat{V}_i^T\|_F\leq \lambda, ~\forall i\in S.
\]
Hence, for each $i\in S$, $\tilde{A}_i=\tilde{A}$ and
\begin{align}
\|\tilde{V}_i\tilde{V}_i^T-V_iV_i^T\|_F\leq & 2\sqrt{2}\|\tilde{A}-V_iV_i^T\|_F\leq 2\sqrt{2}(\|\tilde{A}-VV^T\|_F+\|V_iV_i^T-VV^T\|_F) \nonumber \\
\leq& 2\sqrt{2}(\|\tilde{A}-VV^T\|_F+\delta), \label{case1:nonasymp}
\end{align}
where the first inequality is by Lemma \ref{lemma: DK}. On the other hand, it is always true that $\|\tilde{A}_i-\hat{A}_i\|_F\leq \lambda$, which implies
\begin{align}
\label{single:task:side}
&\|\tilde{V}_i\tilde{V}_i^T-V_iV_i^T\|_F\leq 2\sqrt{2}\|\tilde{A}_i-V_iV_i^T\|_F \nonumber \\
\leq &2\sqrt{2}(\|\tilde{A}_i-\hat{A}_i\|_F+\|\hat{A}_i-V_iV_i^T\|_F)\leq 3\sqrt{2}\lambda, \quad \forall i\in S.
\end{align}
Putting together \eqref{case1:nonasymp}-\eqref{single:task:side} gives \eqref{general:nonasymp}. The proof is then completed by combining \eqref{general:nonasymp} with Theorem \ref{theorem:depth}, Lemma \ref{lemma: DK} and \eqref{single:learning:rate:union}.

\bibliographystyle{alpha}
\bibliography{refs}
	
\end{document}